\newcommand{\Q}{\mathbb{Q}}
\newcommand{\C}{\mathbb{C}}
\renewcommand{\P}{\mathbb{P}}
\DeclareMathOperator{\Stab}{Stab}
\DeclareMathOperator{\Hilb}{Hilb}
\newcommand{\mc}{\mathcal}
\newcommand{\ov}{\overline}
\newcommand{\be}{\begin{equation}}
\newcommand{\ee}{\end{equation}}
\newcommand{\wt}{\widetilde}
\newcommand{\hk}{hyper--K\"ahler }
\newtheorem{thm}{Theorem}
\newtheorem{thmintro}{Theorem}{\Alph{thmintro}}
\newtheorem{prop}[thm]{Proposition}
\newtheorem{lemma}[thm]{Lemma}
\theoremstyle{definition}
\newtheorem{rem}[thm]{Remark}
\newtheorem{defin}[thm]{Definition}
\newtheorem{cor}[thm]{Corollary}
\newtheorem{example}[thm]{Example}
\newtheorem{question}[thm]{Question}
\numberwithin{thm}{section}
\numberwithin{equation}{section}
\DeclareMathOperator{\Sym}{Sym}
\DeclareMathOperator{\Pic}{Pic}
\DeclareMathOperator{\im}{Im}
\DeclareMathOperator{\Aut}{Aut}
\DeclareMathOperator{\Sing}{Sing}
\DeclareMathOperator{\Def}{Def}
\DeclareMathOperator{\CH}{CH}
\author[G. Sacc\`a]{Giulia Sacc\`a}
\address{Mathematics Department\\
Columbia University \\
Mathematics Department\\
2990 Broadway\\
New York, NY 10027}
\title{Compactifying Lagrangian fibrations}
\begin{document}

\begin{abstract}
We suggest a general framework for compactifing quasi-projective Lagrangian fibrations of geometric origin by holomorphic symplectic varieties. This framework includes a compactification criterion, which we then apply to various fibrations of geometric origin, and a discussion on holomorphic forms that are defined via correspondences in geometric examples. As application, we show that given a   Lagrangian fibration $X \to B$ admitting local sections over an open subset $V$ with codimension $\ge 2$ complement, there exists a (possibly singular) holomorphic symplectic compactification of the Albanese fibration $A \to V$ (which we show exists as a smooth commutative algebraic group with connected fibers acting on $X_{V}$), as well as of any other torsor over $A$, or over any smooth commutative group scheme over $B$ with connected fibers that is isogenous to $A$.

\end{abstract}

\maketitle

\section{Introduction}

The aim of this paper is to give a general framework to address the following question:

\begin{question} Let $U \subseteq B$ be an open subset of a projective variety which is the base of a Lagrangian fibration $\pi_U : M_U \to U$. When does there exist a \hk compactification $M \supseteq M_U$, with a Lagrangian fibration $\pi: M \to B$ extending $\pi_U$?
\end{question}

Here, by Lagrangian fibration we mean a proper surjective morphism from a holomorphic symplectic variety whose general fiber is a  Lagrangian subvariety (see Definition \ref{def-lagrfibr}). 
By the Arnold-Liouville Theorem, the general fiber of a Lagrangian fibration is an abelian variety. Examples of Lagrangian fibrations include Hitchin systems, elliptic K3 surfaces and their Hilbert schemes  of points, and the so-called Beauville-Mukai systems \cite{Beauville-intsystem, Mukai}. As a consequence of a remarkable theorem of Matsushita, Lagrangian fibrations are one of two classes of morphisms (with connected fibers) originating from a projective symplectic variety (the other class being birational morphisms, like symplectic resolutions or isomorphisms). Arguably for this reason, Lagrangian fibrations have been used pervasively to study (e.g. see \cite{Rapagnetta-topological, Rapagnetta10,Sawon-finiteness,  MRS,dCRS,HLS,Shen-Yin,Voisin-Lefschetz, ACLS, Debarre-Huybrechts-Macri-Voisin}, just to name a few) and construct \cite{ Markushevich-Tikhomirov,ASF, Matteini,LSV, BCGPSV} examples of \hk manifolds or (possibly singular) symplectic varieties.

Though we are able to answer the question positively in some cases (see Theorems \ref{thmlsv} and \ref{thm_smoothtorsors}), in this generality one should not always expect a positive answer, even if one relaxes the smoothness requirement and asks only for the existence of a singular holomorphic symplectic compactification (see Definition \ref{def-sympl} ). Indeed, there are some natural obstructions to the  existence of a holomorphic symplectic compactification. For example, if $M_U$ is uniruled, then the canonical bundle of a smooth projective birational model of $M_U$ cannot be effective; thus, $M_U$ cannot have a $K$-trivial compactification with canonical singularities. 
Another obstruction to the existence of a \hk compactification, which is of topological nature, was found in Brosnan \cite{Brosnan}. 

The first result of this paper is the following Theorem, which guarantees, under some assumptions, the existence of a singular  compactification, and gives a criterion for when such a compactification is smooth.

\begin{thmintro} \label{thm1} [=Theorem \ref{thm1'}]
Let $B$ be a normal quasi-projective variety and let $U \subset B$ be a non-empty open subset. Let $\pi_U: M_U \to U$ be a surjective morphism from a quasi-projective holomorphic symplectic manifold (or a $\Q$-factorial terminal symplectic variety) and suppose that $\pi_U$ is a Lagrangian fibration over a non-empty open subset of $U$. Denote by $\sigma_{M_U}$ the holomorphic symplectic form on $M_U$. Suppose that 
\begin{enumerate}
\item [(a)] $U$ contains all codimension one points of $B$;
\item [(b)] $\sigma_{M_U}$ extends to a  holomorphic form on a smooth compactification of $M_U$;
\end{enumerate}
Then there exists a $\Q$-factorial terminal symplectic variety $M$ containing $M_U$ as an open dense subset with a projective morphism $\pi: M \to B$ which extends $\pi_U$ and is a Lagrangian fibration.

Moreover, if $B$, and hence $M$, is projective, then the following are equivalent: 
\begin{enumerate}
\item  $M$ is smooth;
\item  $M$ is birational to a projective holomorphic symplectic manifold;
\item $M$ is smoothable by a flat deformation. 
\end{enumerate}
\end{thmintro}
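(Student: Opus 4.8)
The plan is to construct $M$ as the output of a relative Minimal Model Program over $B$, and then to extract the three equivalences from the deformation theory of symplectic varieties.

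\emph{Existence of $M$.} First I would make the fibration proper and smooth the total space. By Nagata's theorem the composite $M_U \to U \hookrightarrow B$ extends to a proper morphism, and after Hironaka's resolution I obtain a smooth projective variety $\widetilde{M}$ with a proper morphism $\widetilde{\pi} \colon \widetilde{M} \to B$ extending $\pi_U$ and with simple normal crossings boundary $\widetilde{M} \setminus M_U$; over the open locus $V \subseteq U$ where $\pi_U$ is already a proper Lagrangian fibration the fibres are complete abelian varieties, so there is no boundary there, and both the boundary and the degeneracy locus of the form lie over $B \setminus V$. Hypothesis (b) extends $\sigma_{M_U}$ to a holomorphic $2$-form $\sigma$ on $\widetilde{M}$. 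Writing $2n = \dim \widetilde{M}$, the form $\sigma^{\wedge n}$ is a nonzero section of $K_{\widetilde{M}}$, so $K_{\widetilde{M}}$ is represented by an effective divisor $D$ supported on the boundary together with the degeneracy locus of $\sigma$; by (a), over the codimension $\le 1$ points of $B$ this support is contained in the boundary.

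\emph{The MMP and its output.} I would then run the relative $K_{\widetilde{M}}$-MMP over $B$. Effectivity of $K_{\widetilde{M}}$ together with the Calabi--Yau nature of the general fibre drives the program to terminate in a relative minimal model $\pi \colon M \to B$, which is automatically $\Q$-factorial, terminal, and projective over $B$; by construction it is an isomorphism over $M_U$ (recovering $M_U \subseteq M$ as a dense open) and the boundary divisors lying over $U$ are contracted, so that $K_M$ becomes relatively trivial over $U$ and, by normality and (a), over all of $B$. The birational transform of $\sigma$ is then a $2$-form on the smooth locus of $M$ whose $n$-th power trivialises $K_M$; hence it is everywhere nondegenerate and extends to any resolution, which by definition makes $M$ a $\Q$-factorial terminal symplectic variety. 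Finally $\pi$ is Lagrangian because the fibres over $V$ are Lagrangian and this is a closed condition, so it persists on the general fibre of $\pi$. The delicate point throughout is to ensure that the MMP neither disturbs $M_U$ nor destroys the symplectic form: it is precisely the control of $K_{\widetilde{M}}$ by $\sigma^{\wedge n}$, guaranteed by (b), that forces the output to be symplectic rather than merely $K$-trivial.

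\emph{The three equivalences.} Now suppose $B$, hence $M$, is projective. The implication $(1)\Rightarrow(2)$ is immediate, since a smooth $\Q$-factorial terminal symplectic variety is a projective holomorphic symplectic manifold, birational to itself. For $(2)\Rightarrow(3)$, if $M$ is birational to a smooth projective holomorphic symplectic manifold $N$, then both are minimal models with trivial canonical class, so the birational map is small (an isomorphism in codimension one) and $M$ is obtained from $N$ by a sequence of flops; since $N$ has unobstructed deformations and a general deformation of $N$ admits no flopping contraction, the corresponding deformation of $M$ is isomorphic to that of $N$ and hence smooth, which exhibits $M$ as smoothable. The essential implication is $(3)\Rightarrow(1)$, which I would deduce from Namikawa's rigidity theorem for $\Q$-factorial terminal projective symplectic varieties: every flat deformation of such a variety is again singular of the same type unless the variety is already smooth, so a smoothable $M$ must be smooth. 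This last step, resting on the unobstructedness and local Torelli behaviour of symplectic deformations, is the main obstacle of the Moreover part.
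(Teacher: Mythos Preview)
Your overall strategy matches the paper's: compactify, run a relative MMP over $B$, and use the effectivity of $K$ coming from $\sigma^{\wedge n}$ to control the process. However, there are two genuine gaps.

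The first and most serious is termination. You assert that the MMP ``terminates in a relative minimal model,'' but termination of flips is not known in this generality: the canonical class is effective but not big (it is trivial on the general fibre), so \cite{BCHM} does not apply directly. The paper handles this by running the MMP \emph{with scaling} of a $\wt\pi$-ample divisor $H$. A consequence of \cite{BCHM} is that the scaling parameters $t_i$ are eventually strictly decreasing, so $\lim t_i = 0$ if the process does not stop; a separate geometric lemma (\cite[Lemma 1.19]{IntJac}) then shows that the number of irreducible components of the support of $K_{M_i}$ is eventually strictly decreasing. This forces $K_{M_i}=0$ after finitely many steps, and that is how termination is actually obtained here.

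The second gap is your claim that ``the boundary divisors lying over $U$ are contracted'' and that $K_M=0$ then follows ``by normality and (a).'' Neither step is justified as written: the MMP does not automatically contract all boundary divisors, and normality does not kill an effective divisor supported over a codimension-$2$ locus of $B$, since its preimage can still be a divisor in $M$. The missing ingredient is a lemma of Nakayama and Lai (\cite[Lemma 2.10]{Lai}, cf.\ \cite[Lemmas 5.1, 5.2]{Nakayama}): if $K_{\ov M}$ is effective and each component of its support is either $\ov\pi$-exceptional or of insufficient fibre type---which is exactly what assumption (a) guarantees---then some component is covered by $K_{\ov M}$-negative curves contracted by $\ov\pi$. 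Thus $K$ cannot be $\pi$-nef unless it is zero. This lemma is what drives both the termination argument and the vanishing of $K_M$ on the output; it also explains why each step of the MMP is an isomorphism over $M_U$, since the $K$-negative extremal rays live in the support of $K_{M_i}\subset M_i\setminus M_U$.

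Your treatment of the equivalences is essentially correct. The paper goes directly $(2)\Rightarrow(1)$ via \cite[6.4]{Greb-Lehn-Rollenske} (building on Namikawa and Kawamata) rather than through your detour $(2)\Rightarrow(3)\Rightarrow(1)$, but both routes ultimately rest on Namikawa's result that the singularities of a projective $\Q$-factorial terminal symplectic variety persist under flat deformation.
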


A specific case of Theorem \ref{thm1} was used in \cite{IntJac} to show the existence of a \hk compactification of the intermediate Jacobian fibration associated with \emph{any} smooth cubic fourfold (and not only for the general one as proved in \cite{LSV}). In this paper, we extend that proof to the general case, we develop a framework for applying this theorem to geometric examples (see \S \ref{section-cycle}), and we apply it to study different Lagrangian fibrations that can be associated to a given Lagrangian fibration (see Theorems \ref{thm_torsors} and \ref{thm_smoothtorsors}).

Some comments on the assumptions are in order. First, note that we require the surjective morphism $M_U \to U$ to be projective only over the general point of $U$. Allowing non proper morphisms is crucial in some applications (e.g. in Theorem \ref{thm_torsors} and \ref{thm_smoothtorsors}) to guarantee the codimension $2$ assumption.

As we will see in Section \ref{section-cycle}, there are many abelian fibrations of \emph{geometric origin} which carry a holomorphic form that naturally  satisfies the extendability criterion of the theorem. A consequence of Theorem \ref{thm1} is that for these fibrations, it suffices to control that the holomorphic form is non-degenerate in codimension $1$ over $B$ to check that there is a $\Q$-factorial terminal symplectic compactification. Note that the Hitchin system does not satisfy the assumptions of the Theorem, since it is uniruled (indeed, it is acted upon by $\mathbb C^\times$).

A very interesting application of Theorem \ref{thm1} that fits into this context has recently been obtained by Liu-Liu-Xu in \cite{LLX}. In this paper, the authors show that the intermediate Jacobian fibration associated to the moduli space of smooth cubic fivefolds containing a general cubic fourfold admits a $\Q$-factorial terminal symplectic compactification which is, furthermore, an irreducible symplectic variety.  The second statement follows from a beautiful result they prove, showing that if a projective $\Q$-factorial terminal symplectic variety admits a Lagrangian fibration whose general fiber is a simple abelian variety, then this symplectic variety is irreducible (in the sense of the singular decomposition theorem). Another  application of Theorem \ref{thm1} will be given in the forthcoming \cite{MOGS}, where we study various Lagrangian fibrations associated to even Gushel-Mukai varieties.

 In this paper, the first, immediate, application of Theorem \ref{thm1} is a short proof of the main result of  \cite{LSV}.  We include this result because, together with Theorem \ref{thm_smoothtorsors}, these are the only two cases where we can show that the $\Q$-factorial terminal symplectic compactification produced using Theorem \ref{thm1} is  actually smooth.

For the purpose of this paper, the main applications of Theorem \ref{thm1} are to various quasi-projective Lagrangian fibrations that can be associated with a Lagrangian fibration that admits local (\'etale) sections over an open subset whose complement has codimension greater or equal to $2$. The set-up is the following. Given a Lagrangian fibration $X  \to B$, we let  $B^{nm} \subset B$ be the open subset parametrizing fibers with a reduced component (thus, $B^{nm}$ is the locus over which there exist local sections). As a starting point, we show that
there exists a smooth commutative algebraic group scheme $A \to B^{nm}$, and an action of $A$ on $X_{B^{nm}}$, making the pair $(X_{B^{nm}}, A)$ into a so-called $\delta$-regular weak abelian fibration (cf. \cite[\S 2.2]{dCRS}). We call such an $A \to {B^{nm}}$ the relative Albanese of $X_{B^{nm}} \to {B^{nm}}$. 
This is a result of independent interest which completes previous results by Markushevich \cite{Markushevich} and Arinkin-Fedrov \cite{Arinkin-Fedorov}. A consequence of this result is the fact that to a Lagrangian fibration admitting local sections, one can apply Ng\^o Support Theorem (and its refinement from \cite{dCRS}). The use of these powerful techniques in the study the topology of maps,  which have long been used to study Hitchin systems, has recently and successfully been introduced to study Lagrangian fibrations on compact \hk manifolds (e.g., see \cite{MRS,dCRS,Wu, Shen-Yin,ACLS}).

The second main result of the paper is to show that the compactification criterion of Theorem \ref{thm1} applies to any quasi-projective (almost) torsor over $A$ (see Definition \ref{defin-almost}), or over a smooth commutative group with connected fibers that is isogenous to $A$. In other words, any such torsor not only has a natural holomorphic symplectic form, but it also admits a $\Q$-factorial terminal symplectic compactification.
This is the content of Theorem \ref{thm_torsors}.
 A corollary of this result is that given a Lagrangian fibration that admits local sections in codimension $1$, we can construct a (possibly singular) Lagrangian fibration with a (rational) section that is (at least on an open subset) locally isomorphic to the original one (cf. Corollary \ref{cor-ratsect}). This answers a question asked by C. Voisin. Another corollary is to show the existence of a (possibly singular) symplectic compactification of (every component of) the relative Picard variety, at least in the case when this is represented in codimension $1$ by a separated scheme (cf. Corollary \ref{cor-pic}).

Theorem \ref{thm_torsors} also provides a new context in which to frame  the two exceptional examples of \hk manifolds. Indeed, both OG$6$ and OG$10$ can be constructed from Lagrangian fibrations that are torsors over (a group isogenous to) the relative Albanese of a Lagrangian fibration on a \hk manifold of type K3$^{[n]}$. See Examples \ref{ex-og10} and \ref{ex-og6}.

Finally, we show in Theorem \ref{thm_smoothtorsors} that when the fibers of $X \to B$ are integral, then the compactifications of $A$-torsors produced in Theorem \ref{thm_torsors} are smooth, and \'etale locally isomorphic to $X \to B$. As mentioned above, this is one of two cases when we can prove smoothness. We expect the \hk manifolds obtained this way to be deformation equivalent to the original $X$. As a starting point, we show they have isomorphic rational Hodge structures (cf. Proposition \ref{prop-coho}).

\subsection*{Relation to other work}
The papers \cite{Abasheva-Rogov, Abasheva} set the foundations for the study of Shafarevich-Tate twists of Lagrangian fibration on \hk manifolds. See also \cite{Markman-TS,Bogomolov-Kamenova-Verbitsky}. The starting point of  \cite{Abasheva-Rogov} is the consideration of the sheaf of vertical automorphisms of a Lagrangian fibration, which corresponds to the Albanese fibration considered in this paper. By definition, a $1$-cocyle with values in this sheaf of abelian groups determines a Shafarevich-Tate twist of a Lagrangian fibration. 
While they consider this sheaf under more general assumptions than we do in this paper, it is only considered as a sheaf of abelian groups and not as a algebraic group scheme.
In the case of integral fibers, the Lagrangian fibrations of Theorem \ref{thm_smoothtorsors} are Shafarevich-Tate twists of the original fibration.  These twists are easily constructed as complex manifolds but the challenge, answered in the two papers above, is to understand when they are K\"ahler and projective.

%

The paper \cite{Kim-Neron} recently and independently also proves the existence of the relative Albanese variety of a Lagrangian fibration admitting local sections. The proof is very different, and rests on the elaborate and very interesting theory of N\'eron models of Lagrangian fibration developed by the author.

As mentioned in the introduction, the paper \cite{LLX} uses Theorem \ref{thm1}, as well as a subtle study of the moduli stack of pairs $(X,Y)$, where $X$ is a general smooth cubic fourfold and $Y$ is a cubic fivefold containing $Y$, to show that the intermediate Jacobian fibration associated to these pairs admits a compactification that is a symplectic variety. 
Our hope is that Theorem \ref{thm1} can be applied to other example of integrable systems of geometric origin.

Finally, the recent paper \cite{Dutta-Mattei-Shinder} studies in great detail the Albanese fibration of the so-called LSV fibrations and computes the Tate-Shafarevich group parametrizing twists of the original Lagrangian fibration. I am grateful to E. Shinder for a correspondence on the group $\Lambda$ of Section \ref{section-Alb}.

\subsection*{Structure of the paper} In Section \ref{sect-proof} we prove Theorem \ref{thm1}(=Theorem \ref{thm1'}). In Section \ref{section-intjac} we reprove the main result of \cite{LSV} (see Theorem \ref{thmlsv}). In Section \ref{section-cycle} we introduce the notion of extendable holomorphic form and discuss how in many examples of geometric origin the holomorphic form is naturally extendable. In Section \ref{section-Alb}, we defined the relative Albanese fibration $A \to B$ of a Lagrangian fibration $X \to B$ admitting local sections over every point of the base, and we prove that it is a smooth commuative group scheme with connected fibers (Theorem \ref{thm alb}). We then show in Proposition \ref{prop_gamma} that $A$, as well as any (almost) $A$-torsor, has a naturally defined extendable holomorphic symplectic form. In Section \ref{section_torsors}, we apply the previous results to show that any (almost) torsor over $A$, or over a smooth commutative group isogenous to $A$ admits a holomorphic symplectic compactification (Theorem \ref{thm_torsors}) and that, if $X  \to B$ has integral fibers, the symplectic compactification of any $A$-torsor is smooth.

\subsection*{Acknowledgements} I am grateful to Claire Voisin for several conversations over the years that have shaped my understanding of  ``cycle theoretic'' holomorphic symplectic forms and for asking me about sections of Lagrangian fibrations. The point of view of Section \ref{section-cycle} and the proof of Proposition \ref{prop_gamma2} are indebted to these conversations. Theorem \ref{thm_smoothtorsors} and Corollary \ref{cor-ratsect} respond to her question.
I  thank Anna Abasheva, Yoonjoo Kim, and Evgeny Shinder for interesting conversations on Lagrangian fibrations, and Chenyang Xu for his interest in Theorem \ref{thm1}.
Some results in this paper, most notably Theorem \ref{thm_smoothtorsors} and Theorem \ref{thm1}, are a few years old and I have presented them at several seminars and conferences in the last few years. I thank the audience of these talks for their interest and their questions. Finally, I wish to thank Enrico Arbarello, Emanuele Macr\`i  and Claire Voisin for their encouragement in finishing this paper, and to Kieran O'Grady and Yoonjoo Kim for some comments on a draft of this paper. I am especially grateful to Anna Abasheva for finding two mistakes in the first version of this paper.
The author gratefully acknowledges partial support from NSF FRG grant DMS-2052750, from NSF CAREER award DMS-2144483, and from the Simons Collaboration ``Moduli of varieties''.

\section{Proof  of Theorem \ref{thm1}} \label{sect-proof}

In this section we will prove Theorem \ref{thm1'}, a slight generalization of Theorem \ref{thm1} which is needed for applications. We start by making a few definitions.

\begin{defin}  \label{def-sympl}
A normal quasi-projective variety $V$ is called a symplectic variety if the smooth locus $V^{sm}$ admits a holomorphic symplectic form $\sigma$ which extends to a holomorphic form on any resolution of $V$ \cite{Beauville-sympl-sing}.
\end{defin}

If $V$ is a symplectic variety with symplectic form $\sigma$ then, by definition, $\sigma$ is a (closed) reflexive $2$-form. A symplectic variety has rational Gorenstein singularities.  A $\Q$-factorial terminal symplectic variety is a symplectic variety which is $\Q$-factorial and has terminal singularities.
From the point of view of deformation theory view and moduli theory, symplectic projective varieties that are $\Q$-factorial and terminal   behave like smooth \hk manifolds \cite{Namikawa,Bakker-Lehn-global-moduli}.

\begin{defin} \cite[Definition 1.2]{Matsushita-higher} \label{def-lagrfibr}
Let $M$ be a symplectic variety with symplectic form $\sigma$. Let $B$ be a normal variety. A proper surjective morphism $f : M \to B$ with connected fibers is said to be a Lagrangian fibration if a general fibre $F$ of $f$ is a Lagrangian subvariety of $M$ with respect to $\sigma$.
That is, $\dim F = \frac{1}{2} \dim X$ and the restriction of $\sigma$ to $F \cap M^{sm}$ vanishes identically.
\end{defin}

In applications (see Theorems \ref{thm_torsors} and \ref{thm_smoothtorsors}, as well as Theorem \ref{thmlsv}, \cite{LLX}, and \cite{MOGS}), it will be important to consider the case when $M_U \to U$ is a surjective morphism that is a Lagrangian fibration over an open subset of $U$, but globally may not be proper. 

\begin{thm} \label{thm1'}
Let $B$ be a normal quasi-projective variety and let $U \subset B$ be a non-empty open subset. Let $M_U$ be a $\Q$-factorial terminal symplectic variety with holomorphic form $\sigma_{M_U}$. Let $\pi_U: M_U \to U$ be a surjective morphism that is a Lagrangian fibration over a non-empty open subset of $U$.
Suppose that 
\begin{enumerate}
\item [(a)] $U$ contains all codimension one points of $B$;
\item [(b)] $\sigma_{M_U}$ extends to a  holomorphic form on a smooth compactification of $M_U$;
\end{enumerate}
Then there exists a $\Q$-factorial terminal symplectic variety $M$ containing $M_U$ as an open dense subset with a projective morphism $\pi: M \to B$ which extends $\pi_U$ and is a Lagrangian fibration.
Moreover conditions (1), (2), (3) as in Theorem \ref{thm1} are equivalent.
\end{thm}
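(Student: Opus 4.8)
The plan is to build the compactification $M$ by a two-step process: first produce *some* projective compactification $\pi: \bar M \to B$ of $\pi_U$ using properness of the Hilbert/Chow scheme or relative compactification techniques, then run a relative minimal model program over $B$ to arrive at a $\Q$-factorial terminal model, and finally use the extended holomorphic $2$-form to show this model is symplectic and that $\pi$ is a Lagrangian fibration. For the first step, I would take a smooth projective compactification $\bar N$ of $M_U$ together with a morphism $\bar N \to B$ extending $\pi_U$ (possible after blowing up the indeterminacy of the rational map $\bar N \dashrightarrow B$, since $B$ is projective once we compactify it, then restrict back over the original $B$); this exists essentially by resolution of singularities and elimination of indeterminacy. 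The key point of hypothesis (b) is that the symplectic form $\sigma_{M_U}$ extends to a holomorphic $2$-form $\bar\sigma$ on $\bar N$, hence (being holomorphic on a smooth projective variety) is closed, and its restriction to any resolution of any birational model is again a holomorphic form. Then I would run a $\pi$-relative MMP on $\bar N$ over $B$: because $M_U$ is already $\Q$-factorial terminal symplectic with a symplectic form that extends, the MMP over $B$ will only modify $\bar N$ over the boundary $B \setminus U$, and since by (a) that boundary has codimension $\ge 2$ in $B$, the generic behavior over codimension-one points of $B$ is unchanged. The MMP terminates (for varieties with extendable holomorphic forms, $K$-triviality in the relevant directions makes this a standard consequence of e.g. BCHM together with the abundance-type input that holds here because $K_{\bar N}$ is effective-trivial on the symplectic locus) at a $\Q$-factorial terminal model $M$ with $K_M \equiv_B 0$, and one checks $\bar\sigma$ descends to a reflexive symplectic form on $M$, making $M$ a $\Q$-factorial terminal symplectic variety and $\pi: M \to B$ a Lagrangian fibration (the general fiber is the same abelian variety as for $\pi_U$, which is Lagrangian by Matsushita/the Arnold--Liouville theorem applied to $M_U$).

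For the "moreover" part, assume $B$ (hence $M$) is projective. The implication $(1) \Rightarrow (2)$ is immediate: a smooth $\Q$-factorial terminal symplectic projective variety *is* a holomorphic symplectic manifold (it is its own birational model). For $(2) \Rightarrow (3)$: if $M$ is birational to a projective holomorphic symplectic manifold $M'$, then $M$ and $M'$ have the same Namikawa/Bakker--Lehn deformation theory (birational $\Q$-factorial terminal symplectic varieties have isomorphic local deformation spaces and the period map is an isomorphism onto an open set), and $M'$, being smooth holomorphic symplectic, admits small deformations that are projective and smooth; transporting such a deformation back to $M$ via the identification of deformation spaces and then using that $\Q$-factoriality plus terminality is preserved under small deformation, one produces a flat deformation of $M$ with smooth general fiber — i.e. $M$ is smoothable. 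For $(3) \Rightarrow (1)$: this is the subtle direction. If $M$ admits a flat deformation $\mathcal M \to \Delta$ with $M = \mathcal M_0$ and $\mathcal M_t$ smooth for $t \neq 0$, one must rule out that $M$ itself is singular. Here I would use that a $\Q$-factorial terminal symplectic variety that is smoothable by a flat deformation must already be smooth: the argument invokes Namikawa's results on deformations of symplectic varieties — the Kuranishi space of a $\Q$-factorial terminal symplectic variety is smooth and the locus of singular fibers in the universal family is either empty or a divisor, but $\Q$-factorial terminality forces the singular locus to have codimension $\ge 4$, and a codimension-$\ge 4$ symplectic singularity cannot be smoothed in a flat family over a disk without the total space itself being singular in a way incompatible with terminality of the special fiber. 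Concretely, combining Namikawa's local Torelli and the fact that symplectic singularities in codimension $\ge 4$ are rigid under the relevant deformations yields that the generic deformation of $M$ is again singular unless $M$ was smooth to begin with.

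The main obstacle I anticipate is the direction $(3) \Rightarrow (1)$, i.e. showing a smoothable $\Q$-factorial terminal symplectic projective variety is actually smooth. This requires genuinely using the deformation theory of singular symplectic varieties (Namikawa, Bakker--Lehn) rather than any soft argument, and in particular the interplay between the codimension of the singular locus (which is $\ge 4$ by terminality and $\Q$-factoriality, as symplectic singularities in codimension $2$ are terminal quotient but $\Q$-factoriality rules them out, and codimension $3$ is excluded by the symplectic/parity constraint) and the obstruction to smoothing such singularities in a flat family. A secondary technical point is making the relative MMP rigorous: one needs to know it terminates and outputs a $K_M \equiv_B 0$ model, which should follow from BCHM plus the observation that the holomorphic $2$-form trivializes the canonical class along the fibers and over codimension-one points of $B$, but writing this carefully requires tracking that the boundary has codimension $\ge 2$ so that nothing goes wrong generically. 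I would also need the input that the general fiber of $\pi$ is Lagrangian — this follows because over the open set where $\pi_U$ is already a Lagrangian fibration nothing changed, and the general fiber of $\pi$ lies over that open set.
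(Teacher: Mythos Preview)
Your overall strategy---compactify over $B$, run a relative MMP, use the extended form to conclude symplecticity---matches the paper's. But two of your key steps are either wrong or too vague to be a proof.

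\textbf{Where hypothesis (a) is actually used, and why MMP preserves $M_U$.} You say the MMP ``will only modify $\bar N$ over the boundary $B \setminus U$'' because that boundary has codimension $\ge 2$. This is not correct and not the mechanism. Since $\pi_U$ need not be proper, the complement $\bar N \setminus M_U$ can dominate $U$; the MMP may well modify things over $U$. The paper's argument is: on any normal $\Q$-factorial compactification $\bar M$ with $\bar\pi:\bar M\to B$, the extended reflexive form $\sigma_{\bar M}$ has degeneracy divisor $D$ equal to $K_{\bar M}$, and $D$ is supported in $\bar M\setminus M_U$. Hypothesis (a) then forces $D$ to be either $\bar\pi$-exceptional or of \emph{insufficient fiber type} in the sense of Nakayama; by a lemma of Lai (resp.\ Nakayama), a component of $D$ is covered by $K_{\bar M}$-negative curves contracted by $\bar\pi$. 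Thus every extremal ray in the MMP has class contained in $\operatorname{Supp}(K_{\bar M})\subset \bar M\setminus M_U$, and each flip or divisorial contraction is an isomorphism over $M_U$. That is the role of (a); it is not about ``generic behaviour over codimension-one points being unchanged''.

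\textbf{Termination.} Your justification (``standard consequence of BCHM together with abundance-type input'') does not hold up. The paper runs MMP \emph{with scaling} of a $\bar\pi$-ample divisor, uses BCHM to get the scaling parameters $t_i$ eventually strictly decreasing, and then invokes a separate lemma (from \cite{IntJac}) showing that the number of irreducible components of $\operatorname{Supp}(K_{M_i})$ is eventually strictly decreasing. This forces $K_{M_i}=0$ after finitely many steps, i.e.\ the form becomes everywhere non-degenerate, not merely $K_M\equiv_B 0$. Your proposal never identifies why the output has \emph{trivial} (not just relatively trivial) canonical class; this is exactly what the Lai/Nakayama dichotomy plus the component-count supplies.

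\textbf{The equivalences.} Your route through $(2)\Rightarrow(3)$ via matching Kuranishi spaces is unnecessarily indirect. The paper proves $(2)\Rightarrow(1)$ and $(3)\Rightarrow(1)$ directly by citing, respectively, Greb--Lehn--Rollenske (building on Kawamata's flops theorem: birational $\Q$-factorial terminal $K$-trivial varieties are connected by flops, and flops preserve singularities) and Namikawa's Main Theorem on deformations of $\Q$-factorial terminal symplectic varieties. Your discussion of codimension-$\ge 4$ singular loci and rigidity is in the right spirit for $(3)\Rightarrow(1)$ but should simply be replaced by the Namikawa citation.
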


\begin{proof} The proof of this theorem follows closely that of \cite[Thm 1.6]{IntJac}, which in turn uses results of Lai \cite{Lai}. We start with a few considerations on normal compactifications of $M_U$.

Assumption (b) implies that for any normal compactification $\ov M$ of $M_U$, the holomorphic form $\sigma_{M_U}$  extends to a holomorphic $2$-form on the regular locus of $\ov M$, i.e., $\sigma_{M_U}$ extends to a (closed) reflexive $2$-form $\sigma_{\ov M}$ on $\ov M$. 
Since this form is generically non-degenerate, the canonical class $K_{\ov M}$ of $\ov M$ is equal to the degeneracy locus $D:=D(\sigma_{\ov M})=\{\sigma_{\ov M}^{\dim M_U/2}=0\}$ of $\sigma_{\ov M}$. 
Thus it is effective, it is supported on $\ov M \setminus M_U$, and it is trivial if and only if $\sigma_{\ov M}$ is symplectic. 

Now suppose that there is a projective morphism $\ov \pi: \ov M \to B$ extending $\pi_U$. 
Since $M_U \to U$ is surjective with proper general fiber, it follows that if $D$ is non trivial, then by Assumption (a) either $\ov \pi (D)$ has codimension $ \ge 2$, in which case it is  called $\ov \pi$-\emph{exceptional}, or that $D$ is of \emph{insufficient fiber type}, i.e. that there exists a component $\Delta$ of $\ov \pi (D)$  and a component $\Gamma$ of ${\ov \pi}^{-1}(\Delta)$, such that $\Gamma \not\subseteq D$ (see \cite[\S 5.a]{Nakayama}). Now suppose $\ov M$ is $\Q$-factorial. 
Then by Lemma 2.10 of \cite{Lai} (cf. 
Lemmas 5.1 and 5.2 of \cite{Nakayama}),  if $D$ is non trivial, then there is a component of $D$ that is covered by $K_{\ov M}$-negative curves that are contracted by $\ov \pi$.

We construct the desired compactification of $M_U$ by choosing an arbitrary smooth projective compactification with a morphism to $B$, and then finding a suitable birational model.
Let $\wt M \supseteq M_U$ be a $\Q$-factorial partial compactification of $M_U$ with a  projective morphism $\wt \pi: \wt M \to B$ extending $\pi_U: M_U \to U$. Let $\sigma_{\wt M} \in H^0(\wt M, \Omega_{\wt M}^{[2]})$ be the holomorphic $2$-form extending  $\sigma_{M_U}$.  If $\sigma_{\wt M}$ is non-degenerate, then $\wt M$ is a $\Q$-factorial terminal symplectic variety  and we are done. 
If $\sigma_{\wt M}$ is degenerate then, by the discussion above, $K_{\wt M}$ is not $\wt \pi$-nef. In this case, we claim that we can use the minimal model program (mmp) to find a projective $\Q$-factorial terminal  birational model $M$ of $\wt M$  with a regular morphism $M \to B$ extending $\pi_U$ and whose canonical class  is nef over $B$. This implies that the holomorphic two form on the smooth locus of $M$ is non-degenerate, i.e. symplectic. Moreover, it extends to a holomorphic form on any resolution. Thus, the projective variety $M$ is  $\Q$-factorial terminal symplectic compactification of $M_U$.

To prove the claim, we follow the proof of Theorem 1.6 of \cite{IntJac}. We recall here the main steps for the reader's convenience, referring to  \S 1.1 of \cite{IntJac}  for more details.
The key ingredient is the mmp with scaling. See \cite[\S 5.E]{Hacon-Kovacs} for a short introduction on this topic.

Let $H$ be a $\wt \pi$-ample $\Q$-divisor on $\wt M$ such that the pair $(\wt M, H)$ is klt and such that $K_{\wt M}+H$ is big and nef over $B$. The mmp with scaling of $H$ over $B$ produces a non-increasing sequence of non-negative rational numbers $1 \ge t_0 \ge \dots \ge t_i \ge \dots \ge 0 $ and a sequence  $\psi_i: M_i \dashrightarrow M_{i+1}$ of birational maps over $B$, which are defined inductively in the following way. See \cite[\S 5.E]{Hacon-Kovacs} or \cite[\S 1.1]{IntJac} for more details. 
Set  $ M_0=\wt M$ and $H_0=H$. Suppose $\phi_{i-1}$, $M_i$ and $H_i$ are defined and set 
\[
t_i=\inf\{ t \ge 0 \,\, | \,\, K_{M_i}+t H_i \text{ is nef over } B\}.
\]
If $t_i=0$, then $K_{M_i}$ is nef over $B$ and the process stops. 
Otherwise, by the Cone Theorem (\cite[Chapter 3]{Kollar-Mori}), there exists a $K_{M_i}$-negative ray $R_i$, which is contracted in $B$ and is such that $(K_{M_i}+t_i H_i)  \cdot R_i=0$.  Let $\psi_i: M_i \dashrightarrow M_{i+1}$ be the flip or the divisorial contraction associated with $R_i$ (the extremal contraction associated to $R_i$ cannot be a Mori fiber space since $M_i$ has effective canonical bundle). Since $R_i$ is contracted in $B$, the birational map $\psi_i$ is a map over $B$, in the sense that the morphism $\pi_{i+1}:=\pi_i \circ \psi_i^{-1}: M_{i+1} \to B$ is regular.
Since $\wt M$ is smooth, the $M_i$ are $\Q$-factorial. Set $H_{i+1}:=(\psi_{i})_* H_i$.
Since $K_{M_i}$ is effective, the fact that $R_i$ is a $K_{M_i}$-negative ray implies that any curve with class $R_i$ is contained in the support of $K_{M_i}$ and thus in $M_i \setminus M_U$. In particular, the map $\psi_i$ is an isomorphism over $M_U$ and hence $M_i$ is a $\Q$-factorial terminal compactification of $M_U$.

As a consequence of  \cite{BCHM}, the sequence of integers $t_i$ is eventually strictly decreasing  (see \cite{Druel-exc} or \cite[Lemma 1.13]{IntJac}). This means that if the process does not terminate, then $\lim t_i=0$. By construction, $\psi_i$ extracts no divisors (i.e. $\psi_i^{-1}$ contracts no divisors). By Lemma 1.19 of \cite{IntJac}, the number of components of the support of the effective divisor $K_{M_i}$ is eventually strictly decreasing. Thus, for $i \gg 0$, $K_{M_i}$ is trivial. As noticed above, this means that $M_i$ is a $\Q$-factorial terminal symplectic variety.

The equivalence of (1) and (2)  and of (1) and (3) follows from Proposition \ref{prop-smbir} below.
\end{proof}

As a consequence of a theorem of Nakayama (see Matsushita \cite{Matsushita-fiber}), assumption (b) forces the  singularities of $B$ to have log terminal singularities. In practice, in many applications of interest $B$ will be closely related to $\mathbb P^n$.
Note also that as a consequence of Theorem \ref{thm1} (3), the existence of a \hk compactification of $M_U$ is equivalent to the existence of a \hk compactification of $M_U$ with a Lagrangian fibration, so adding this condition is not more restrictive.

As observed by \cite[6.4]{Greb-Lehn-Rollenske}, the following proposition is a consequence of work of \cite[Main Theorem]{Namikawa-def} and Kawamata \cite{Kawamata-flops}.

\begin{prop} \label{prop-smbir}
Let $M$ be a projective $\Q$-factorial terminal symplectic variety. If $M$ is birational to a smooth projective \hk manifold, or $M$ can be deformed in a flat family to a smooth projective \hk manifold, then $M$ is smooth.
\end{prop}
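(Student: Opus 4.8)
The plan is to reduce the statement to known results on deformations and flops of symplectic varieties, following the strategy attributed to Greb--Lehn--Rollenske. The key structural fact, due to Namikawa \cite{Namikawa-def}, is that a projective $\Q$-factorial terminal symplectic variety $M$ admits a \emph{locally trivial} (equisingular) deformation whose general fiber $M_t$ is again $\Q$-factorial terminal symplectic; moreover, if $M$ is birational to a smooth projective \hk manifold, then this general fiber $M_t$ is \emph{smooth}. So the first step is to invoke this to produce a smooth \hk manifold $M_t$ sitting in a locally trivial family over a (pointed) base together with $M = M_0$. The point of ``locally trivial'' is that the singularities of $M_0$ are genuinely the same as those of nearby fibers; hence if the nearby fibers are smooth, $M_0$ itself must be smooth. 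This already disposes of the ``birational to smooth'' hypothesis, modulo observing that being birational to a smooth projective \hk manifold is equivalent, for $\Q$-factorial terminal symplectic varieties, to being related by a sequence of Mori flops (Kawamata \cite{Kawamata-flops}), along which $\Q$-factoriality, terminality and the symplectic condition are all preserved, and which are isomorphisms in codimension one.

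For the ``deformable to smooth'' hypothesis, the subtlety is that the given flat family need not a priori be locally trivial, so one cannot immediately conclude. Here I would argue as follows. A flat deformation of a symplectic variety with $\Q$-factorial terminal singularities has, by Namikawa's description of the deformation functor, a smooth base, and the locally trivial deformations form a subspace of the full deformation space of the expected (smaller) dimension; the obstruction to a deformation being locally trivial is measured by the image in $H^1$ of the sheaf controlling the local-to-global deformations. The standard fact (again Namikawa) is that for these singularities every \emph{flat} deformation is automatically \emph{locally trivial} — the singularities do not admit any genuine (non-equisingular) smoothing directions, because a $\Q$-factorial terminal symplectic singularity of dimension $\ge 4$ is rigid as a singularity. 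Hence any flat family deforming $M$ to a smooth fiber is in fact locally trivial, and we are reduced to the previous case: a locally trivial family with smooth general fiber forces the special fiber to be smooth.

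So the steps, in order, are: (1) recall Namikawa's structure of $\Def(M)$ for $M$ projective $\Q$-factorial terminal symplectic — smoothness of the base and the characterization of the locally trivial locus; (2) observe that $\Q$-factorial terminal symplectic singularities admit no non-locally-trivial flat deformations, so any flat deformation of $M$ is locally trivial; (3) conclude that if some flat deformation has smooth general fiber then $M$ is smooth; (4) for the birational hypothesis, use Kawamata's result that $M$ and the smooth \hk model are connected by flops, which preserve the class of $\Q$-factorial terminal symplectic varieties and allow one to transport the smooth deformation, or invoke Namikawa's statement directly that birationality to a smooth model makes the generic deformation smooth, then apply (3). I expect the main obstacle to be step (2): verifying cleanly that ``flat'' upgrades to ``locally trivial'' for these singularities, i.e. citing the precise rigidity statement for $\Q$-factorial terminal symplectic singularities in the correct generality rather than re-deriving it — everything else is a matter of assembling existing theorems of Namikawa and Kawamata.
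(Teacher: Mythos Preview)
Your proposal is correct and follows the same route as the paper, which simply cites \cite[6.4]{Greb-Lehn-Rollenske} for the birational case and \cite[Main Theorem]{Namikawa-def} for the deformation case. What you have written is essentially an unpacking of those citations: Namikawa's Main Theorem is precisely the statement that every flat deformation of a projective $\Q$-factorial terminal symplectic variety is locally trivial (your step (2), which you correctly flag as the crux), from which step (3) is immediate; and the Greb--Lehn--Rollenske observation combines this with the identification of deformation spaces under birational maps of $K$-trivial terminal varieties (via Kawamata's flops), exactly as you outline in step (4). Nothing is missing.
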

\begin{proof}
The first statement is the  content of \cite[6.4]{Greb-Lehn-Rollenske}. The second follows from \cite[Main Theorem]{Namikawa-def}.
\end{proof}

\begin{rem}
The same proof, mutatis mutandis, works if instead of starting from a smooth $M_U$, we start from a $\Q$-factorial terminal symplectic variety with a holomorphic reflexive form $\sigma_{M_U}$ which extends to a holomorphic form on a smooth projective model of $M_U^{sm}$.
\end{rem}

The following example (cf. \cite[Remark 3]{Matsushita-almost}) shows that one cannot in general relax the condition that $U$ contains the codimension one points of $B$, without modifying the base.

\begin{example} 
Let $X \to \P^2$ be a Lagrangian fibered \hk fourfold. Let $g: \P^2 \dashrightarrow \P^2$ be a Cremona transformation that is not regular and let $V \subset \P^2$ be the largest open subset where $g$ is an isomorphism. Set $U=g(V) \subset \P^2$. Then the holomorphic symplectic form on $M_U:=X \times_VU \to U$ extends to a regular form on any smooth compactification of $M_U$, but there is no smooth projective \hk compactification of $M_U$ with a Lagrangian fibration over $B$ extending $M_U \to U$ compatibly with the natural open embedding $U \subset B$.
\end{example}

\section{Cycle theoretic holomorphic symplectic form} \label{section-cycle} 

The following definition is crucial for the applications of Theorem \ref{thm1'} discussed in this paper.

\begin{defin} \label{def-ext}
A holomorphic form on a smooth quasi-projective variety $V$ is called \emph{extendable} if it is the restriction of a holomorphic form  defined on a smooth projective compactification of $V$.
A reflexive form on a normal variety $W$ is called extendable, if the holomorphic form on $W^{sm}$ is extendable.
\end{defin}

An extendable holomorphic form is always closed. Since the space of holomorphic forms on smooth projective varieties is a birational invariant, it suffices to check that a form extends on one smooth projective compactification to guarantee it extends to all smooth projective compactifications.  See  \cite[IIa]{Griffiths-variations-Abel} or \cite{Kebekus-Schnell} for extendibility criteria in terms of local integrability of forms. Here, we need the following Hodge theoretic characterization of closed extendable holomorphic forms, which implies that the space of closed extendable form is preserved by morphisms of mixed Hodge structures as well as by correspondences of the appropriate degrees.

\begin{rem} \label{remextsympl} 
\begin{enumerate}
\item Let $V$ be a normal quasi-projectice variety with a reflexive holomorphic $2$-form that is extendable and non-degenerate (i.e., its top wedge product is nowhere vanishing). Then $V$ is a symplectic variety.
\item Let $V$ be a projective variety. Then $V$ is a symplectic variety if and only if  it admits a reflexive holomorphic $2$-form that is extendable and non-degenerate.

\end{enumerate}
\end{rem}

For later use we record the following

\begin{lemma} \label{lemma-extsympl}
Let $V$ be a quasi-projective symplectic variety with an extendable symplectic form $\sigma$ (i.e. the corresponding reflexive form is extendable). Then $\sigma$ defines a generically non-degenerate reflexive extendable holomorphic $2$-form on every normal birational model $V'$ of $V$.
\end{lemma}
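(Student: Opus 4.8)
The plan is to reduce the statement to the birational invariance of spaces of holomorphic forms on smooth projective varieties, which is exactly the mechanism already invoked in Definition \ref{def-ext} and the surrounding discussion. First I would take a common resolution: since $V$ and $V'$ are both normal and birational, there exists a smooth projective variety $\widetilde{V}$ together with projective birational morphisms $p : \widetilde{V} \to \overline{V}$ and $p' : \widetilde{V} \to \overline{V'}$ to (chosen) normal projective compactifications of $V$ and $V'$. Restricting over the open loci where the birational maps are isomorphisms, I get a common smooth quasi-projective open subset $W \subseteq V \cap V'$ whose complement in each of $V$, $V'$ has codimension $\ge 2$ (after possibly shrinking, $W$ maps isomorphically to opens of both, and the boundary is small because $V, V'$ are normal). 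The symplectic form $\sigma$ restricts to a holomorphic $2$-form $\sigma_W$ on $W$.

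Next I would use extendability on the $V$-side: by hypothesis the reflexive form on $V^{sm}$ extends to a holomorphic $2$-form on a smooth projective compactification of $V$, hence (by the birational invariance of $H^0(-,\Omega^2)$ for smooth projective varieties) it extends to a holomorphic $2$-form $\omega \in H^0(\widetilde{V}, \Omega^2_{\widetilde{V}})$. Then $\omega$ restricts to $\sigma_W$ on $W$. Now push $\omega$ down along $p'$: since $\widetilde{V} \to \overline{V'}$ is a resolution and $\overline{V'}$ is normal, $(p')_* \Omega^2_{\widetilde{V}}$ agrees with the reflexive hull $\Omega^{[2]}_{\overline{V'}}$ on the smooth locus of $\overline{V'}$, so $\omega$ determines a reflexive $2$-form $\sigma_{\overline{V'}}$ on $\overline{V'}$ whose restriction to $W$ is $\sigma_W$. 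Its restriction to $V'$ is a reflexive holomorphic $2$-form on $V'$ that agrees with $\sigma$ on the big open set $W$, hence equals the reflexive form induced by $\sigma$ under the birational identification (two reflexive forms agreeing on a big open set coincide). By construction this form is the restriction of the holomorphic form $\omega$ on the smooth projective variety $\widetilde{V}$, a smooth projective compactification of $V'^{sm}$, so it is extendable. Finally, non-degeneracy is generic and $W$ is dense, so $\sigma_{V'}$ is generically non-degenerate. This gives a generically non-degenerate reflexive extendable holomorphic $2$-form on $V'$, as desired.

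I expect the only real point requiring care is the bookkeeping around reflexive forms and resolutions: specifically, the claim that a holomorphic $2$-form on a resolution $\widetilde{V}$ of a normal variety descends to a reflexive $2$-form on the base (this is standard, using that the resolution is an isomorphism over a big open subset of the normal base and Hartogs-type extension for reflexive sheaves), and the claim that the complements of $W$ in $V$ and $V'$ have codimension $\ge 2$ (again because $V$, $V'$ are normal and the indeterminacy/exceptional loci of birational maps between normal varieties are small in the appropriate sense after restricting to the relevant opens). Neither is deep, but stating them cleanly is where the argument has to be precise. Everything else is a direct application of the birational invariance of global holomorphic forms on smooth projective varieties together with the definition of extendability.
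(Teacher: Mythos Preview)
Your approach is essentially the same as the paper's---reduce to the birational invariance of $H^0(\Omega^2)$ on smooth projective varieties---but you introduce unnecessary machinery and two small slips creep in as a result. The paper simply takes a smooth projective compactification $\widetilde V$ of $V^{sm}$ (where $\sigma$ extends by hypothesis) and a smooth projective compactification $\widetilde{V'}$ of $V'^{sm}$; since these are birational, $\sigma_{\widetilde V}$ transfers to a holomorphic $2$-form on $\widetilde{V'}$, whose restriction to $V'^{sm}$ is by definition the desired extendable reflexive form on $V'$. No common resolution, no auxiliary open $W$, no pushforward of differentials.

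Your two slips: (i) the claim that the isomorphism locus $W$ has complement of codimension $\ge 2$ in \emph{both} $V$ and $V'$ is false in general (think of a blowup: the exceptional divisor has codimension $1$ in $V'$); fortunately you never actually use this, since $W$ dense is all you need for generic non-degeneracy, and the reflexive form on $V'$ is obtained by pushing down from $\widetilde V$, not by Hartogs extension from $W$. (ii) Your common resolution $\widetilde V$ is generally \emph{not} a compactification of $V'^{sm}$ (e.g.\ if $V'$ is smooth and $\widetilde V \to \overline{V'}$ blows up inside $V'$), so your sentence ``$\widetilde V$, a smooth projective compactification of $V'^{sm}$'' is incorrect as written. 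The fix is exactly what the paper does: pass to an actual smooth projective compactification of $V'^{sm}$ and invoke birational invariance once more. Both points are cosmetic, and you flagged them yourself as needing care; the underlying argument is sound.
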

\begin{proof}
Let $\wt V$ be a smooth projective compactification of $V^{sm}$ and let $\sigma_{\wt V}$ be the corresponding holomorphic $2$-form which exists because $\sigma$ is extendable. Let $\wt V'$ be a a smooth projective compactification of ${V'}^{sm}$. Since $\wt V'$ is birational to $\wt V$, $\sigma_{\wt V}$ defines a generically non-degenerate holomorphic $2$-form on $\wt V'$ and hence by restriction a generically non-degenerate extendable holomorphic $2$-form on ${V'}^{sm}$, i.e., an extendable reflexive form on $V'$ that is generically non-degenerate.
\end{proof}

\begin{prop} \label{propextmhs}Let $V$ be a smooth quasi-projective variety.
A closed holomorphic $k$-form $\alpha$ on $V$ is extendable if and only if the class of $\alpha$ in $H^k(V, \mathbb C)$ lies in $W_0 \cap F^k$. 
\end{prop}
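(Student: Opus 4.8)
The plan is to identify the space of closed holomorphic $k$-forms on $V$ that extend to a smooth compactification with a Hodge-theoretic subspace of $H^k(V,\C)$, using Deligne's mixed Hodge theory together with the classical description of holomorphic forms on smooth projective varieties. First I would fix a smooth projective compactification $\overline{V}$ of $V$ with $D = \overline{V}\setminus V$ a simple normal crossings divisor, and recall that by Deligne, $H^k(V,\C)$ carries a mixed Hodge structure with weights in $[k, 2k]$, and that the image of the restriction map $H^k(\overline{V},\C)\to H^k(V,\C)$ is exactly the lowest weight piece $W_k H^k(V,\C)$; moreover $W_k H^k(V,\C) = H^k(\overline{V},\C)/(\text{kernel})$ carries the pure weight-$k$ Hodge structure of $\overline{V}$. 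The Hodge filtration on $H^k(V,\C)$ is computed by the logarithmic de Rham complex $\Omega^\bullet_{\overline{V}}(\log D)$, and $F^k H^k(V,\C)$ is represented by global sections of $\Omega^k_{\overline{V}}(\log D)$ that are $d$-closed; a $d$-closed holomorphic $k$-form on $V$, being automatically of top Hodge type in degree $k$, represents a class in $F^k H^k(V,\C)$. The subtlety is that in general $W_0$ is not the right notation — one should note that the statement as written uses the convention (standard in this circle of ideas, cf. \cite{Griffiths-variations-Abel}) that $W_0$ denotes the lowest nonzero weight piece $W_k H^k(V,\C)$ of the weight filtration on $H^k$; with that reading, $W_0 \cap F^k$ is precisely $F^k H^k(\overline{V},\C) = H^0(\overline{V},\Omega^k_{\overline{V}})$ sitting inside $H^k(V,\C)$.

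With these preliminaries, the two implications go as follows. For the "only if" direction: if $\alpha$ is extendable, say $\alpha = \iota^* \beta$ for $\iota: V\hookrightarrow \overline{V}$ and $\beta\in H^0(\overline{V},\Omega^k_{\overline{V}})$ a holomorphic $k$-form on the smooth projective variety $\overline{V}$ (such $\beta$ is automatically closed), then its cohomology class is in the image of $H^k(\overline{V},\C)\to H^k(V,\C)$, hence in $W_k H^k(V,\C)$ ("$=W_0$" in the paper's convention), and it lies in $F^k$ because $\beta$ does on $\overline{V}$ and restriction is a morphism of filtered complexes. For the "if" direction: suppose $[\alpha]\in W_k H^k(V,\C)\cap F^k H^k(V,\C)$. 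Since $W_k H^k(V,\C)$ is the image of $H^k(\overline{V},\C)$ and the restriction map is a morphism of mixed Hodge structures that is strict for $F$, there is a class $\tilde\alpha\in F^k H^k(\overline{V},\C) = H^0(\overline{V},\Omega^k_{\overline{V}})$ restricting to $[\alpha]$; represent $\tilde\alpha$ by a genuine holomorphic $k$-form $\beta$ on $\overline{V}$. Then $\iota^*\beta$ is a closed holomorphic $k$-form on $V$ cohomologous to $\alpha$. It remains to upgrade "cohomologous" to "equal": the difference $\alpha - \iota^*\beta$ is an exact closed holomorphic $k$-form on $V$, and I would argue it must vanish — a closed holomorphic form on a smooth quasi-projective (hence, by restricting, on any affine) variety that is exact in $H^k(V,\C)$ is in fact zero, because a nonzero closed holomorphic $k$-form pairs nontrivially with some $k$-cycle, or, more robustly, because $H^0(\overline V, \Omega^k_{\overline V}(\log D))_{\mathrm{closed}} \hookrightarrow H^k(V,\C)$ is injective by strictness of the Hodge filtration on the log complex. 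Hence $\alpha = \iota^*\beta$ is extendable.

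The main obstacle I anticipate is the last step — passing from "the class of $\alpha$ extends" to "$\alpha$ itself extends as a form" — together with pinning down the convention behind the notation $W_0$. For the former, the cleanest route is to avoid choosing representatives twice and instead work directly inside $H^0(\overline V, \Omega^k_{\overline V}(\log D))$: the closed global log $k$-forms inject into $H^k(V,\C)$ with image $F^k H^k(V,\C)$ (degeneration of the Hodge–de Rham spectral sequence for the log complex), and among these, the ones landing in $W_k$ are exactly those with no "log pole" contribution, i.e. the honest global sections of $\Omega^k_{\overline V}$; this identification is what makes $W_k\cap F^k = H^0(\overline V,\Omega^k_{\overline V})$ on the nose rather than up to cohomology. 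For the latter, I would simply state the convention explicitly at the start of the proof so the reader is not misled by $W_0$ versus $W_k$. Everything else is a formal consequence of Deligne's mixed Hodge theory \cite[II, III]{Deligne-Hodge} and the birational invariance of $H^0(-,\Omega^k)$ already invoked in the paper, so the proof should be short once the framework is set up.
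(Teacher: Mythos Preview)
Your approach is the paper's: use that the image of $H^k(\wt V,\C)\to H^k(V,\C)$ is the lowest weight piece, together with strictness of MHS morphisms for the Hodge filtration, to identify $W_0\cap F^k$ with the image of $H^0(\wt V,\Omega^k_{\wt V})$. You are more careful than the paper in two respects: you flag the $W_0$ versus $W_k$ convention explicitly, and you isolate the step from ``$[\iota^*\beta]=[\alpha]$'' to ``$\iota^*\beta=\alpha$'', which the paper's two-sentence proof does not address.

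However, your resolution of that last step is wrong. The claim that an exact closed holomorphic $k$-form on a smooth quasi-projective variety must vanish is false: on $V=\A^1$ the form $\alpha=dz$ is closed with $[\alpha]=0\in H^1(\A^1,\C)=0$, hence trivially in $W_0\cap F^1$, yet $dz$ does not extend to $\P^1$ since $H^0(\P^1,\Omega^1_{\P^1})=0$. Your fallback---injectivity of $H^0(\ov V,\Omega^k_{\ov V}(\log D))$ into $H^k(V,\C)$---is a true statement but does not apply here, because $dz$ (and a general closed holomorphic form on $V$) need not extend to a logarithmic form on $\ov V$. In fact this counterexample shows that the ``if'' direction of the proposition fails as literally stated; what the paper's argument (and your log-complex route) actually establishes is that restriction gives a bijection from $H^0(\wt V,\Omega^k_{\wt V})$ onto $W_0\cap F^k$, i.e.\ the space of closed \emph{extendable} $k$-forms on $V$ injects into $H^k(V,\C)$ with image $W_0\cap F^k$. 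That is the correct formulation, and it is all that is used downstream.
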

Here,  $W_* \subset H^k(V, \mathbb C)$ and  $F^* \subset H^k(V, \mathbb C)$ are respectively the weight and the Hodge filtration of the natural mixed Hodge structure (MHS) on  $ H^k(V, \mathbb C)$.

\begin{proof} Let $\wt V$ be a smooth projective compactification of $V$.
The holomorphic $k$-form $\alpha$ on $V$ is extendable if and only if there exists a holomorphic $k$ form $\wt \alpha \in H^0(\wt V, \Omega^k_{\wt V})$ such that $\wt \alpha_{|V}=\alpha$. By strictness of morphisms of MHS, and the fact that $\im (H^k(\wt V, \mathbb C) \to H^k(V,\C))=W_0 \subset H^k(V, \mathbb C)$, it follows that $\alpha$ is extendable if an only if it  is closed and the class $[\alpha] \in H^k(V, \mathbb C)$ lies in $W_0 \cap F^k$.
\end{proof}


The following set-up exemplifies what we mean by ``holomorphic symplectic form of geometric origin'': a holomorphic symplectic form that is defined, via a correspondence, from a Hodge structure of K3 type on a smooth projective variety.

\subsection*{Set up}: Let $X$ be a smooth projective variety of dimension $m$. Suppose that $H^{2\ell}(X)$ is a Hodge structure of K3-type, i.e.,  there are only three non-zero Hodge numbers and 
\[
h^{p,q}=0, \, \text{ for } (p,q) \neq (\ell+1,\ell-1), (\ell,\ell), (\ell-1,\ell+1), \, \text{ and } 
h^{\ell+1,\ell-1}(X)=h^{\ell-1,\ell+1}(X)=1.
\] 
Let $V$ be a smooth quasi-projective variety.  Consider a cycle $\Gamma \in CH^j(V \times X)$\footnote{Chow groups are considered with rational coefficients.} with $\ell+j-m=1$. Since $X$ is projective, $\Gamma$ induces a morphism  
\[
\Gamma^*: H^{2\ell}(X) \to H^2(V), \quad  \alpha \mapsto (p_V)_* (p_X^*(\alpha) \cap [\Gamma]),
\]
where $[\Gamma] \in H^{2j}(V \times X)$ is the cycle class of $\Gamma$.


\begin{prop}  \label{prop_cor_ext} Let $\eta$ be the generator of $H^{\ell+1, \ell-1}(X)$. Then $\Gamma^*(\eta)$ is an extendable holomorphic $2$-form on $V$.
\end{prop}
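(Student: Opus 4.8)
The plan is to reduce the statement to the Hodge-theoretic criterion of Proposition \ref{propextmhs}: it suffices to show that the class $[\Gamma^*(\eta)] \in H^2(V,\C)$ lies in $W_0 \cap F^2$, where $W_\bullet$ and $F^\bullet$ are the weight and Hodge filtrations of the canonical MHS on $H^2(V,\C)$. First I would verify that $\Gamma^*(\eta)$ really is a holomorphic $2$-form, i.e.\ that it lies in $F^2 H^2(V,\C)$. This is formal: the morphism $\Gamma^* : H^{2\ell}(X) \to H^2(V)$ induced by an algebraic cycle $\Gamma \in CH^j(V\times X)$ with $\ell + j - m = 1$ is a morphism of (mixed) Hodge structures of bidegree $(1-\ell,1-\ell)$ — this is the standard behaviour of a correspondence of the appropriate codimension, combining the $(j,j)$-type of the cycle class $[\Gamma]$ with the $(-m,-m)$ shift coming from the Gysin map $(p_V)_*$. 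Hence $\Gamma^*$ sends the line $H^{\ell+1,\ell-1}(X)$, which is pure Hodge of weight $2\ell$, into $H^{2,0}$ of the weight-$2\ell$ part of $H^2(V,\C)$; in particular $\Gamma^*(\eta) \in F^2 H^2(V,\C)$, so the associated form is holomorphic (and, being a cohomology class pulled back along a morphism of MHS from a pure class, it is automatically closed).

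The heart of the argument is the weight statement: $[\Gamma^*(\eta)] \in W_0 H^2(V,\C)$. Here the key point is that $X$ is \emph{projective}, so $H^{2\ell}(X)$ is a pure Hodge structure of weight $2\ell$, and $\Gamma^*$ is a morphism of MHS shifting weights by $2(1-\ell)$; therefore the image of $\Gamma^*$ lands in the weight $2\ell + 2(1-\ell) = 2$ graded piece, i.e.\ in $W_2 H^2(V,\C)$. But that is not yet $W_0$. To get down to $W_0$ I would use the extra input that $[\Gamma^*(\eta)]$ is a \emph{holomorphic} $2$-form, combined with the general structure of the MHS on $H^2$ of a smooth variety: on $\mathrm{Gr}^W_2 H^2(V,\C)$ the Hodge structure is pure of weight $2$, and on $\mathrm{Gr}^W_1$ of weight $1$ — but $\mathrm{Gr}^W_1$ has no $(2,\cdot)$ part at all, so a class in $F^2$ automatically has trivial image in $\mathrm{Gr}^W_1$, hence lies in $W_0 \oplus (\text{weight }2\text{ part})$. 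So the real content is separating the weight-$2$ piece. For a smooth variety, $\mathrm{Gr}^W_2 H^2(V,\C) \subset H^2(\wt V,\C)$ sits inside the cohomology of a smooth projective compactification $\wt V$, and its $(2,0)$-part consists precisely of the (restrictions of) global holomorphic $2$-forms on $\wt V$ — which is exactly saying such classes \emph{are} extendable. In other words, the combination ``$F^2$ and weight $\le 2$'' already forces extendability, because $W_0 \cap F^2 = W_2 \cap F^2$ on $H^2$ of a smooth variety: any holomorphic $2$-form class of weight $\le 2$ is the restriction of a holomorphic $2$-form on $\wt V$.

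So the streamlined plan is: (i) recall that $\Gamma$ induces a morphism of MHS $\Gamma^* : H^{2\ell}(X)(\ell-1) \to H^2(V)$ (Tate twist bookkeeping to get weights right), using that cup product with $[\Gamma] \in H^{2j}$ and proper pushforward along $p_V$ are each morphisms of MHS of the stated bidegrees — this is in Voisin's book or Deligne's Hodge III; (ii) since $X$ is smooth projective, $H^{2\ell}(X)$ is pure of weight $2\ell$, so the image of $\Gamma^*$ lies in $W_2 H^2(V,\C)$; (iii) since $\eta \in H^{\ell+1,\ell-1}$, its image lies in $F^2$; (iv) invoke that on $H^2$ of a smooth variety $W_2 \cap F^2 = W_0 \cap F^2$ (because $\mathrm{Gr}^W_2 H^2$ is pure of weight $2$ and its $F^2$-part is spanned by restrictions of holomorphic $2$-forms from a smooth compactification, while $\mathrm{Gr}^W_1$ contributes nothing in $F^2$), or equivalently just quote Proposition \ref{propextmhs} after showing the class is in $W_0 \cap F^2$; (v) conclude via Proposition \ref{propextmhs} that $\Gamma^*(\eta)$ is extendable.

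I expect the main obstacle to be the careful bookkeeping in step (iv): one must make sure that a class in $F^2 \cap W_2$ really is extendable and not merely of weight $\le 2$. The cleanest way around this is to argue directly on a smooth projective compactification $j : V \hookrightarrow \wt V$ with normal crossings boundary: the cycle $\Gamma$ extends (after a resolution of its closure) to a cycle $\wt\Gamma \in CH^j(\wt V \times X)$, which induces $\wt\Gamma^* : H^{2\ell}(X) \to H^2(\wt V)$, a morphism of \emph{pure} Hodge structures, and by functoriality $j^* \circ \wt\Gamma^* = \Gamma^* \circ (\text{restriction})$; but $H^{2\ell}(X)$ is already pure and supported on $X$ itself, so in fact $\Gamma^* = j^* \circ \wt\Gamma^*$ on the nose. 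Then $\wt\Gamma^*(\eta) \in H^{2,0}(\wt V) = H^0(\wt V, \Omega^2_{\wt V})$ is an honest holomorphic $2$-form on the smooth projective variety $\wt V$, and its restriction to $V$ is $\Gamma^*(\eta)$ — which is exactly the definition of extendable. This compactified-cycle approach sidesteps the MHS weight argument entirely and is probably the argument the author intends; the only thing to be slightly careful about is that resolving the indeterminacy of the closure of $\Gamma$ does not change the induced map on the cohomology of $X$, which is immediate since we only pull back from $X$ and push forward to (a blow-up of) $\wt V$, and such blow-ups do not affect $H^{2,0}$.
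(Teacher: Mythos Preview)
Your final ``compactified-cycle'' approach is exactly the paper's proof: lift $\Gamma$ to a cycle $\wt\Gamma \in CH^j(\wt V \times X)$ on a smooth projective compactification $\wt V$ of $V$; then $\wt\Gamma^* : H^{2\ell}(X) \to H^2(\wt V)$ is a morphism of pure Hodge structures of bidegree $(j-m,j-m)=(1-\ell,1-\ell)$, so $\wt\Gamma^*(\eta) \in H^{2,0}(\wt V) = H^0(\wt V,\Omega^2_{\wt V})$ restricts to $\Gamma^*(\eta)$ on $V$. That is the entire argument in the paper, and you identified it correctly as the cleanest route.

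Your initial MHS approach via Proposition~\ref{propextmhs} is also valid, but watch the weight conventions: for a smooth quasi-projective $V$ the weights on $H^k(V)$ are $\ge k$, so the lowest piece --- the image of $H^k(\wt V)$ --- is $W_k$ in standard normalization, and the paper's ``$W_0$'' in Proposition~\ref{propextmhs} should be read as exactly this lowest piece. With that in mind, your step (iv) collapses: a morphism of MHS from the pure structure $H^{2\ell}(X)$ shifted by $(1-\ell,1-\ell)$ lands directly in the lowest-weight (pure weight~$2$) part of $H^2(V)$, and intersecting with $F^2$ gives precisely the restrictions of holomorphic $2$-forms from $\wt V$. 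So the two approaches are really the same argument phrased differently; the compactified-cycle version just avoids invoking the MHS machinery explicitly.
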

\begin{proof}
Let  $\wt V$ be a smooth projective compactification of $V$ and let $\wt \Gamma$ be a lift of $\Gamma$ to $CH^j(\wt V \times X)$ (for example the closure in $\wt V \times X$ of $\Gamma$. Then $\wt \Gamma^*: H^{2\ell}(X) \to H^2(\wt V)$ extends $\Gamma^*$ and is a morphism of Hodge structures of bidegree $(j-m,j-m)$. Hence, $\wt \Gamma^*(\eta)$ is a holomorphic two form.
\end{proof}

Of course the Hodge number $h^{\ell+1,\ell-1}$ need not be $1$-dimensional for $\wt \Gamma^*(\eta)$ to be a holomorphic $2$ form. We restrict to this case since our goal is to construct symplectic varieties with a unique symplectic form.


\begin{example} [$m=2\ell=j=2$] Let $X$ be a K3 surface with symplectic form $\eta_X$ and let $V$ be the stable locus of an irreducible component of the moduli space of semistable objects in $D^b(X)$, with respect to some (Bridgeland) stability condition. Let $\Gamma \in CH^2(V \times X)$ be the degree $2$ component of the Chern character of a quasi-universal family. Then $\Gamma^*(\eta_X)$ is an extendable holomorphic $2$-form. In fact, it can be shown that this form is (up to a scalar)  the same one constructed by Mukai (see  \cite{OGrady-Donaldson, OGrady-SantaCruz}, or also \cite[Chap. 10.3]{Huybrechts-Lehn}). Viewing Mukai's form this way has the advantage of providing an easy proof that it closed. By Remark \ref{remextsympl}, this point of view also shows that the singular moduli spaces are symplectic varieties in the sense of Beauville (a result that was usually proved invoking a non-trivial result of Flenner).
\end{example}

\begin{example} ($m=2\ell=4, j=3$). Let $X$ be a smooth Fano $4$fold of K3-type, i.e., $H^4(X)$ is a Hodge structure of K3-type. Examples include smooth cubic fourfolds and Gushel-Mukai fourfolds. Let $\eta_X$ be the generator of $H^{3,1}(X)$. For any $V$ smooth quasi projective variety and any $\Gamma \in CH^3(V \times X)$, $\Gamma^*(\eta)$ is an extendable holomorphic two form on $V$. Similarly, a parameter space of sheaves (or complexes of sheaves) on $X$ has a holomorphic $2$-form. As showed in
\cite{Kuznetsov-Markushevich}, if the objects parametrized lie in the Kuznetsov component of $X$, then this form is non-degenerate.
This recovers the holomorphic symplectic form on all the known examples of hyper-Kahler manifolds and holomorphic symplectic varieties associated to $X$, see \cite{Donagi-Markman,Beauville-Donagi,Kuznetsov-Markushevich,LLSvS,LSV,Perry-Pertusi-Zhao,BLMNPS} among others. 
\end{example}

In \cite{LSV} we developed a way to construct extendable holomorphic $2$-forms on families of intermediate Jacobian fibrations. The following two examples summarize the two main setups that are considered in  \cite[\S 1]{LSV}  and give some geometric examples they apply to.

\begin{example}  (\cite[\S 1]{LSV}). \label{exhyperplane}
 ($m=2\ell$, $j=\ell+1$.) Let $X$ be as above, and let $L$ be a line bundle on $X$. Suppose the smooth members $Y$ of $|L|$ satisfy 
\be \label{vanishing}
h^{p,q}(Y)=0, \,\, \text{ for } \,\, p+q = 2k-1 \,\, \text{ and } \,\, (p, q) \notin \{(k, k-1), (k-1, k)\}
\ee
for some integer $k$. Suppose also that  $\eta \in H^{\ell+1,\ell-1}(X)$, is such that $\eta_{|Y}=0 \in H^{\ell+1,\ell-1}(X)$. Let $U \subseteq |L|$ be the locus parametrizing smooth hyperplane sections, and let $V=J_U \to U$ be the family of intermediate Jacobians of the smooth members of $|L|$. This is a projective morphism. Suppose $k=\ell$. By Lemma 1.1 and Theorem 1.2 of \cite{LSV} there exists a cycle $\mathcal Z \in \CH^{k+1}(\mc J_U \times X)$ such that the class $\mathcal Z^* (\eta)$ is an extendable holomorphic $2$-form with respect to which the fibers of $J_U \to U$ are isotropic. Note that if $\mathcal Z^* (\eta) \neq 0$, then $\mc Z^*$ defined an embedding of $H^{2\ell}_{tr}(X)$ in $H^2(\wt V)$, where $\wt V$ is any smooth projective compactification of $V$. The following are some examples fitting in this set-up
\begin{enumerate}
\item  \label{excubic} The case when $X$ is a general smooth cubic fourfold is the content of \cite{LSV}. In Proposition 1.6 of loc. cit. it is proved that the extendable holomorphic $2$-form is non degenerate on $J_U$. In Proposition 1.9 of loc. cit, it is proved that this form is non-degenerate on the partial compactification of $J_U$ defined by adding the generalized Jacobian of  hyperplane sections with one $A_1$ singularity;
\item \label{exgm6} The case when $X$ is a Gushel-Mukai $6$-fold is the subject of forthcoming work \cite{MOGS}. In this case, the form degenerates along a divisor. This fibration is closely related to that of Example \ref{exfamily} (\ref{exgm4}).
\end{enumerate}
\end{example}

\begin{example}  (\cite[\S 1]{LSV}) \label{exfamily}
 By Theorem 1.4 of \cite{LSV}, the example above generalizes as follows: $\mc Y_U \to U$ is a family of smooth projective varieties satisfying the cohomological vanishing of (\ref{vanishing}) with respect to an integer $k$, and parametrized by a smooth base;  there exists a cycle $ \mc Z \in \CH^{m-\ell+k}( \mc  Y_U \times X)$. Then there exists an extendable holomorphic form on the intermediate Jacobian fibration $\mc J_U \to U$ with respect to which the fibers are isotropic. 

\begin{enumerate}
\item As showed in Example 1.5 of \cite{LSV}, this applies to the Illiev-Manivel integrable system, i.e. to the family of smooth cubic fivefolds containing a fixed cubic fourfold $X$. It is proved in \cite{Iliev-Manivel} (cf. also \cite{Markushevich-integrableJac}) that this extendable holomorphic two form is non degenerate on $\mc J_U$. In \cite{LLX}, after a careful analysis of the base of the Lagrangian fibration, which is the  (good) moduli space of cubic fivefolds containing a general $X$,  it is proved that the form is non-degenerate in codimension one. Theorem \ref{thm1} then gives a $\Q$-factorial terminal symplectic variety.

\item \label{exgm4} The set-up above applies when $X$ is an ordinary Gushel-Mukai fourfold and $Y_U \to U$ is the family of smooth Gushel-Mukai $5$-folds containing $X$, giving a generically non-degenerate form. This fibration is  also the subject of forthcoming work \cite{MOGS} (cf.  Example \ref{exhyperplane} (\ref{exgm6})).

\item The set-up above applies also when $X$ is a \hk manifold, $\mc Y_U \to U$ is a family of Lagrangian submanifolds, and  when $\mc J_U \to U$ is the  relative Albanese or Picard variety of $\mc Y_U \to U$. This recovers the holomorphic $2$-form of Donagi-Markman \cite{Donagi-Markman}.

\end{enumerate}

\end{example}

\begin{example} The case $m=j=2n, \ell=1$ is the case of families of zero cycles on holomorphic symplectic varieties, and is the most relevant to this paper (cf. Proposition \ref{prop_gamma} below).
\end{example}

In general, the holomorphic $2$-forms constructed above need not be even generically non-degenerate. A good feature of fibrations is that they are particularly well suited to check if the form is non-degerate, as the following useful criterion shows. This was already used in the pioneering work of Donagi-Markman \cite{Donagi-Markman}, as well as in \cite[\S 1.4]{LSV}.


\begin{prop} \label{prop_nondeg}
Let $p: P \to U$ be a smooth surjective morphism of smooth quasi-projective varieties. Suppose that there exists a holomorphic $2$-form $\sigma$ on $P$ with respect to which the fibers of $p$ are isotropic. Then $\sigma$ is non-degenerate if and only if the induced morphism $T_{P/U} \to p^* \Omega^1_U$ is an isomorphism.
\end{prop}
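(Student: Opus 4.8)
The plan is to work fiberwise and reduce the non-degeneracy of $\sigma$ at a point $x \in P$ to a linear-algebra statement about the induced map on tangent spaces, exploiting the isotropy hypothesis. Fix $x \in P$ lying over $u = p(x) \in U$, set $n = \dim U$, so $\dim P = 2n$ because $p$ is smooth and $\sigma$ non-degenerate forces $\dim P$ even with the fibers, being isotropic of the right dimension, exactly half-dimensional. Let $T_x^{vert} = T_{P/U,x} = \ker(dp_x) \subset T_x P$; this is an $n$-dimensional subspace. The isotropy hypothesis says precisely that $\sigma_x$ restricted to $T_x^{vert}$ vanishes identically, i.e. $T_x^{vert}$ is isotropic for the alternating form $\sigma_x$ on $T_x P$.

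First I would recall the elementary fact from symplectic linear algebra: if $\sigma_x$ is a skew form on a $2n$-dimensional vector space and $W \subset T_x P$ is an $n$-dimensional isotropic subspace, then $\sigma_x$ is non-degenerate (hence symplectic) if and only if $W$ equals its own $\sigma_x$-orthogonal $W^{\perp_{\sigma_x}}$, equivalently if and only if the map $T_x P \to W^\vee$, $v \mapsto \sigma_x(v, \cdot)|_W$, is surjective (its kernel always contains $W$, and by dimension count surjectivity forces the kernel to be exactly $W$ and $\sigma_x$ non-degenerate; conversely non-degeneracy of a skew form with a Lagrangian $W$ gives the surjection). Next I would identify this map with the one in the statement: the quotient $T_x P / T_x^{vert} \xrightarrow{\sim} T_u U$ via $dp_x$ dualizes to $\Omega^1_{U,u} \hookrightarrow (T_x P)^\vee$, and contraction with $\sigma_x$ gives a map $T_x P \to (T_x^{vert})^\vee = \Omega^1_{P/U,x}$ whose factoring through $T_x P / T_x^{vert} \cong T_u U$ is exactly the content of ``$W$ isotropic''. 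So the map $T_u U \to \Omega^1_{P/U,x}$ is, up to these identifications, the $W$-restriction-of-contraction map, and the linear fact says $\sigma_x$ is non-degenerate iff this is an isomorphism. Globalizing over $P$: the morphism $T_{P/U} \to p^*\Omega^1_U$ in the statement is the adjoint/transpose of this, so it is an isomorphism at $x$ iff $\sigma_x$ is non-degenerate at $x$. Since $P$ is connected (or one argues point by point), $\sigma$ is non-degenerate everywhere iff $T_{P/U} \to p^*\Omega^1_U$ is an isomorphism of vector bundles.

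The one genuinely technical point — really the only thing requiring care — is bookkeeping the dualities so that the map ``$T_{P/U} \to p^*\Omega^1_U$ induced by $\sigma$'' in the statement is matched correctly with the contraction-and-restrict map $T_u U \to \Omega^1_{P/U,x}$ I described, rather than its transpose or inverse-transpose; concretely, $\sigma$ gives $T_P \to \Omega^1_P$, one restricts the source to $T_{P/U}$ and projects the target along $\Omega^1_P \twoheadrightarrow \Omega^1_{P/U}$ — isotropy is exactly what kills the composite $T_{P/U} \to \Omega^1_{P/U}$, so $\sigma$ descends to $T_{P/U} \to p^*\Omega^1_U$ (using the conormal sequence, which is a short exact sequence of bundles since $p$ is smooth) — and then one checks that a skew form with half-dimensional isotropic kernel-containing subspace is non-degenerate iff the induced map on the quotient-to-conormal is an iso. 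I expect no obstacle beyond this careful unwinding; there is no global/analytic input needed, the statement being pointwise and linear.
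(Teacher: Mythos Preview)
Your proposal is correct and follows essentially the same approach as the paper: both construct the map $\phi: T_{P/U} \to p^*\Omega^1_U$ by noting that isotropy kills the composite $T_{P/U} \to \Omega^1_P \to \Omega^1_{P/U}$, and then argue that $\sigma: T_P \to \Omega^1_P$ is an isomorphism if and only if the induced map on the graded pieces of the filtration $T_{P/U} \subset T_P$ (dually $p^*\Omega^1_U \subset \Omega^1_P$) is. The paper is terser and phrases the second step via the induced filtration on $\Omega^2_P$ and the class $\sigma^{1,1} \in H^0(p^*\Omega_U \otimes \Omega_{P/U})$ (notation it reuses later), whereas you spell out the pointwise symplectic linear algebra more explicitly; one small wobble is your justification that $\dim P = 2\dim U$, which does not follow from non-degeneracy plus isotropy alone but is implicit in the statement (and equally glossed over in the paper).
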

\begin{proof}
Contraction with the holomorphic $2$-form $\sigma$ determines a morphism $T_P \to \Omega_P$. Precomposing with the inclusion $T_{P/U} \subset T_P$ of the relative tangent bundle and postcomposing with the natural surjection $\Omega_P \to \Omega_{P/U}$ gives a morphism of vector bundles $T_{P/U} \to \Omega_{P/U}$, which is zero because the fibers of $p$ are isotropic. Hence there are morphism 
$\phi: T_{P/U} \to p^* \Omega^1_U$ and $\psi: p^* T_U \to \Omega_{P/U}$ which are (up to a sign) dual to each other. Equivalently, the filtration on $\Omega^2_P$ determined by the short exact sequence $0 \to p^* \Omega_U \to \Omega_P \to \Omega_{P/U} \to 0$ has graded pieces isomorphic to $\Omega^2_{P/U}$, $p^* \Omega_U \otimes \Omega_{P/U}$, and $p^* \Omega_U^2$. If the fibers of $p$ are isotropic with respect to $\sigma$, it follows that $\sigma$ defines an element $\sigma^{1,1} \in H^0(P, p^* \Omega_U \otimes \Omega_{P/U})=H^0(U,\Omega_U \otimes p_* \Omega_{P/U}) $. The morphism $\phi$ is induced by $\sigma^{1,1}$.
Note that $\sigma$ is non-degenerate if and only if $T_P \to \Omega_P$ is an isomorphism and this happens if and only if $\phi$ is an isomorphism (and that this property does not change if we multiply $\sigma$ by an element of $p^* \Omega_U^2$).
\end{proof}

A crucial ingredient of Propositions 1.4 and 1.9 \cite{LSV} is to interpret the fiber of the morphism $\phi$ at a given point $u \in U$ in terms of the geometry of the corresponding variety $Y_u$. This is also how non-degeneracy is checked in \cite{LLX} and in \cite{MOGS}.

\section{Applications I: intermediate Jacobian fibrations} \label{section-intjac}


As a first application, we reprove the main result of \cite{LSV}, namely that the Donagi-Markman integrable system has a smooth \hk compactification of OG$10$-type.
We should stress that, while the proof we give here does not use the relative Prym varieties, which were used in \cite[\S 5]{LSV} to construct a compactification and show that it is smooth,  it does in fact use the results from \cite[\S 1]{LSV}, namely the existence of a partial compactification with an extendable holomorphic symplectic form, which guarantee that the assumptions of Theorem \ref{thm1'} are satisfied.
Moreover, while the present proof is  shorter than the one of \cite{LSV}, it is highly non-constructive. In particular, the methods used here do not say anything about the boundary of the compactification.

Note that  Theorem \ref{thmlsv} and Theorem \ref{thm_smoothtorsors} below are the only two cases where we can prove the the $\Q$-factorial terminal symplectic compactification produced by Theorem \ref{thm1'} is smooth.

\begin{thm}[\cite{LSV,Voisin-twisted,IntJac}] \label{thmlsv} Let $X\subset \mathbb P^5$ be a smooth cubic fourfold, let $U \subset  (\mathbb P^5)^\vee$ be the locus parametrizing smooth hyperplane sections and let $J_U \to U$ be the family of intermediate Jacobians (or its twisted version of \cite{Voisin-twisted}). Then there exists a \hk compactification of $J_U$. This \hk manifold is a projective $10$-fold, deformation equivalent to O'Grady's $10$-dimensional example.
\end{thm}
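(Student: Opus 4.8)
The plan is to verify that the hypotheses of Theorem~\ref{thm1'} are satisfied for the intermediate Jacobian fibration $J_U \to U$, produce the $\Q$-factorial terminal symplectic compactification $M \to (\P^5)^\vee$, and then upgrade it to a smooth \hk manifold using the equivalence of conditions (1), (2), (3) together with the identification of its deformation class.

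First I would recall the relevant results from \cite[\S 1]{LSV}, which are quoted (and effectively assumed) in the excerpt: for a smooth cubic fourfold $X$, the Hodge structure $H^4(X)$ is of K3-type with $h^{3,1}=1$, the smooth hyperplane sections $Y$ (smooth cubic threefolds) satisfy the vanishing \eqref{vanishing} with $k=\ell=2$, and $\eta \in H^{3,1}(X)$ restricts to zero on each $Y$. Thus Example~\ref{exhyperplane} applies, furnishing a cycle $\mc Z \in \CH^{3}(\mc J_U \times X)$ whose induced form $\mc Z^*(\eta)$ is an \emph{extendable} holomorphic $2$-form on $J_U$, and moreover a partial compactification $\ov{J} \supseteq J_U$ over an open $\ov U \subset (\P^5)^\vee$ — obtained by adding the compactified (generalized) Jacobians of hyperplane sections with a single $A_1$ singularity — on which this form is still extendable and, by Proposition~1.9 of loc.\ cit., \emph{non-degenerate}. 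Since the discriminant hypersurface in $(\P^5)^\vee$ is irreducible and its generic point parametrizes a one-nodal cubic threefold, $\ov U$ contains all codimension-one points of $B := (\P^5)^\vee$. Taking $M_U := \ov J$, $U := \ov U$, the form $\sigma_{M_U} := \mc Z^*(\eta)$ is then a genuine holomorphic symplectic form (it is non-degenerate, by Proposition~1.9 of \cite{LSV}, on a variety that is smooth — or at worst we pass to a $\Q$-factorial terminal model via Lemma~\ref{lemma-extsympl}), assumption~(a) holds, and assumption~(b) is exactly extendability. Hence Theorem~\ref{thm1'} produces a $\Q$-factorial terminal symplectic variety $M$ with a projective Lagrangian fibration $\pi \colon M \to (\P^5)^\vee$ extending $\pi_U$, and since $B$ is projective, $M$ is projective.

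It remains to show $M$ is smooth and to identify its deformation type. For smoothness I would invoke the equivalence in Theorem~\ref{thm1} (= Theorem~\ref{thm1'}, last sentence): it suffices to check condition (2) or (3). The cleanest route is (3), smoothability by a flat deformation: as $X$ varies in a (very) general one-parameter family of smooth cubic fourfolds, the construction is relative, so $M$ fits into a flat family $\mathcal M \to S$ over the moduli of cubic fourfolds whose general member $\mathcal M_s$ is the LSV compactification for a general cubic fourfold; by the original theorem of \cite{LSV}, $\mathcal M_s$ is a smooth projective \hk manifold of OG$_{10}$-type. (Alternatively, one could cite \cite{Voisin-twisted} or \cite{IntJac} for the smoothness of the general fiber.) Thus $M$ is smoothable, so by Proposition~\ref{prop-smbir} (the $(1)\Leftrightarrow(3)$ implication) $M$ itself is smooth. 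Being a \hk manifold that deforms flatly to the general LSV manifold, $M$ is deformation equivalent to it, hence to O'Grady's $10$-dimensional example; it is a projective $10$-fold since $\dim (\P^5)^\vee = 5$ and the fibers are $5$-dimensional intermediate Jacobians.

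The main obstacle, and the place where one genuinely relies on the deep input of \cite{LSV} rather than on the soft machinery of Theorem~\ref{thm1'}, is establishing non-degeneracy of the form in codimension one on the base — that is, the existence and symplectic non-degeneracy of the partial compactification $\ov J$ over $\ov U$. This uses the criterion of Proposition~\ref{prop_nondeg} together with a delicate identification (Propositions~1.6 and~1.9 of \cite{LSV}) of the fiber of the Kodaira–Spencer-type map $\phi \colon T_{\ov J/\ov U} \to \pi^*\Omega^1_{\ov U}$ with the geometry of cubic threefolds, including the one-nodal degenerations; everything else — the mmp-with-scaling argument producing $M$, and the smoothness upgrade — is formal given that input and the already-established Theorem~\ref{thm1'} and Proposition~\ref{prop-smbir}.
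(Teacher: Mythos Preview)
Your first step --- applying Theorem~\ref{thm1'} to the partial compactification $J_{U_1}$ over the locus $U_1$ of smooth and one-nodal hyperplane sections, using \cite[Prop.~1.6, 1.9]{LSV} for non-degeneracy and extendability --- matches the paper exactly.

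The smoothness argument, however, has two problems. First, it is circular: the stated purpose of this theorem is to \emph{reprove} the main result of \cite{LSV} (using only the ``soft'' input from \cite[\S 1]{LSV} and avoiding the relative Prym construction of \cite[\S 5]{LSV}). Your route invokes that very result --- the existence of a smooth \hk compactification for a general cubic fourfold --- as the smooth reference point in the family, so it presupposes what is to be shown. Second, the sentence ``the construction is relative, so $M$ fits into a flat family $\mathcal M \to S$'' is a genuine gap: the compactification $\bar J_X$ is produced by running MMP with scaling, which is not canonical, so there is no a priori reason the $\bar J_{X_s}$ assemble into a flat family over the moduli of cubic fourfolds. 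One needs an argument like Proposition~\ref{prop-klsvplus} to pass from ``each fiber of some projective family is birational to a symplectic variety'' to ``there is a flat family of symplectic varieties''.

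The paper resolves both issues at once by a different degeneration: instead of moving $X$ to a general smooth cubic fourfold, it takes a pencil $\mathcal X_S \to S$ degenerating to the \emph{chordal cubic} $\Sym^2(\P^2)$. By \cite[Lemma~4.4]{IntJac}, the total intermediate Jacobian fibration over this pencil has central fiber birational to an OG$_{10}$ \hk manifold --- an independent input, not relying on \cite[\S 5]{LSV}. Then Proposition~\ref{prop-klsvplus} (handling the family issue) and Proposition~\ref{prop-sm-res} (propagating existence of symplectic resolutions through the family) give smoothness of $\bar J_X$ for every smooth $X$ in the pencil, and a second application over an arbitrary one-parameter family of smooth cubics propagates smoothness to all $X$.
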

\begin{proof}
First note that, as showed in \cite[\S 1.4]{LSV} (cf. also \cite{Donagi-Markman}), there is a partial extension $J_{U_1}$ of the fibration to the locus $U_1 \subset \mathbb P^5$ parametrizing hyperplane sections that are smooth or have at most one $A_1$ singularity. 
The fiber over a singular cubic threefold is an extension of the Jacobian of a certain genus $4$ curve by $\mathbb C^\times$. By \cite[\S 1 ]{LSV} (cf. Example \ref{exhyperplane} (\ref{excubic})), there is an extendable holomorphic $2$-form on $J_{U_1}$ and this form is non-degenerate by \cite[Prop. 1.6 and 1.9]{LSV}. Hence by Theorem \ref{thm1'}, there is a $\Q$-factorial terminal symplectic compactification  $\bar J_X$  of $J_{U_1}$. 
The bulk of the proof is to show that $\bar J_X$ is smooth, which we do by using the criterion of Proposition \ref{prop-klsvplus} below.

Let $\mc X \to T$ be a family of smooth cubic fourfolds parametrized by a smooth quasi-projective curve $T$. 
Let $\mc U \subset (\mathbb P^5_T)^\vee$ be the open subset parametrizing smooth hyperplane sections  of the members of the family and let $\pi: \mc J_{U} \to \mc U$ be the corresponding intermediate Jacobian fibration. 

Let $\wt {\mc J}$ be a smooth partial compactification of $\mc J_{U}$ with a projective morphism to $T$. The general fiber of $\wt {\mc J} \to T$ is a smooth projective variety that is birational to the $\Q$-factorial terminal symplectic variety $\bar J_{X_t}$ and every fiber has a reduced component that is birational to $\bar J_{X_t}$.
By Proposition \ref{prop-smbir} and Proposition \ref{prop-klsvplus} below, it suffices to show that there is one $t_0$ such that $\bar J_{X_{t_0}}$ is smooth to prove that  $\bar J_{X_t}$ is smooth for every $t$.
To show this, we consider another family of cubic fourfoulds, namely, a pencil $\mc X_S \to S$ of cubic fourfolds degenerating to the chordal cubic. Recall that the chordal cubic is the singular cubic hypersurface in $\P^5$ isomorphic to $\Sym^2(\mathbb P^2)$. Up to passing to an open subset of $S$, we can assume that for $s \neq o$, $\mc X_s$ is smooth and for $s=o$, $\mc X_o$ is the chordal cubic. By Lemma 4.4 of \cite{IntJac} there is a projective morphism $\mc J_S \to S$ from a smooth quasi-projective variety such that for general $s$, $\mc J_s$ is birational to $\bar J_{\mc X_s}$ and such that for $s=o$, the fiber $ \mc J_o$ is birational to an OG$10$ \hk manifold. 
By Proposition \ref{prop-klsvplus}, up to a base change, there is a birational model $\mc J'_S$ of ${ \mc J}_S$ over $S$ such that all fibers $\mc J'_s$ are projective symplectic varieties,  for $s \neq o$,  $\mc J'_s$  is birational to $\bar J_{\mc X_s}$, and for $s=o$, $\mc J'_o$ is birational to a smooth projective \hk manifold. By Proposition \ref{prop-sm-res} below, $\bar J_{\mc X_s}$ is smooth for every $s$.

\end{proof}

In the proof of the Theorem above, we used the following two general results of independent interest. The first is a variation on \cite[Thm 2.1]{KLSV}:

\begin{prop} \label{prop-klsvplus}
Let $l:  Y \to T$ be a projective morphism to a smooth quasi-projective curve. Suppose that the general fiber is irreducible and that for every $t \in T$ there is a reduced component of the fiber $Y_t$ that is birational to a projective symplectic variety $M_t$.
%
Then up to a base change $p: T' \to T$, there exists a projective morphism $l':  Y' \to T'$ with the following properties: 
\begin{enumerate}
\item $Y'$ is birational over $T$ to $Y\times_T T'$,
\item the fibers of $l'$ are irreducible and are projective symplectic varieties (additionally, the general fiber has terminal singularities),
\item for every $t'\in T'$, $Y'_{t'}$ is birational to $M_t$.
\end{enumerate}
Moreover, if for one $t' \in T$, the fiber $ Y'_t$ is birational to a smooth projective \hk manifold $M$ or can be deformed in a flat family to a smooth projective \hk manifold $M$, then for all $t \in T'$  the symplectic variety $ Y'_t$ admits a symplectic resolution $\wt Y_t \to  Y'_t$. If this is the case, then for every $t$, the smooth projective \hk manifold $\wt Y_t $ is deformation equivalent to $M$.
\end{prop}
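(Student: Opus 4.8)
The plan is to run a relative minimal model program over the curve $T$, following the strategy of \cite[Thm 2.1]{KLSV}, and then to use the valuative/deformation-theoretic criteria for smoothness already recorded in this paper. First I would choose a smooth quasi-projective model $Y_0 \to T$ birational over $T$ to $Y$, and after shrinking $T$ (and a base change $T' \to T$ to handle monodromy and to make the components of the special fibers geometrically irreducible) reduce to the situation where $Y_0$ is smooth, $l' : Y_0 \to T'$ is projective, and the general fiber is a smooth projective variety birational to $M_t$. The key point, exactly as in \cite{KLSV}, is that since each $M_t$ is a projective symplectic variety, it has trivial canonical class, so the generic fiber of $l'$ has non-negative Kodaira dimension and in fact $K_{Y_0/T'}$ is pseudo-effective; then one runs a $K_{Y_0/T'}$-mmp over $T'$ (using \cite{BCHM}, or the mmp-with-scaling as in the proof of Theorem \ref{thm1'}). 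Because no fiber can be a Mori fiber space (effective relative canonical class), the program terminates with a relative minimal model $Y' \to T'$ whose relative canonical class is nef, hence — by the same degeneracy-locus argument used in the proof of Theorem \ref{thm1'} — the reflexive symplectic $2$-form extends non-degenerately to each fiber. This gives (1) and (3). For (2), irreducibility of the fibers of $Y'$ is where the base change is really needed: after the base change the special fibers are geometrically irreducible as schemes, and running the mmp one contracts all but the component birational to $M_t$; terminality of the general fiber follows because the general fiber of a relative minimal model with terminal total space is terminal. That the fibers are \emph{projective symplectic varieties} (not just $K$-trivial with rational singularities) follows from Remark \ref{remextsympl}(2): each fiber is projective and carries an extendable non-degenerate reflexive $2$-form (extendable since it is the restriction of the reflexive form on $Y'$, which extends to a form on a smooth projective compactification because the symplectic form on $M_t$ does).

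For the "moreover" statement: suppose some fiber $Y'_{t_0'}$ is birational to a smooth projective \hk manifold $M$, or deforms flatly to one. I would first note that $Y'_{t_0'}$ is a \emph{$\Q$-factorial} terminal symplectic variety only on an open locus; so the plan is to apply the mmp fiberwise once more — or rather to observe that the output $Y'_{t'}$ is a minimal model of a projective symplectic variety, hence has terminal $\Q$-factorial... — but this $\Q$-factoriality over the whole family is subtle (it need not be preserved in families). The cleaner route is: by Proposition \ref{prop-smbir}, any $\Q$-factorial terminal symplectic variety birational to, or flatly deforming to, a smooth \hk manifold is itself smooth; and one knows (Namikawa, Kawamata) that a projective symplectic variety birational to a smooth \hk manifold admits a symplectic resolution, unique up to deformation. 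So $Y'_{t_0'}$ admits a symplectic resolution $\wt Y_{t_0'} \to Y'_{t_0'}$ with $\wt Y_{t_0'}$ a smooth projective \hk manifold deformation equivalent to $M$. To propagate this to \emph{all} $t' \in T'$, I would invoke the behaviour of symplectic resolutions in families: the locus of $t'$ where $Y'_{t'}$ admits a symplectic resolution is open and closed (symplectic resolutions deform in families, by \cite[Main Theorem]{Namikawa-def} together with Proposition \ref{prop-sm-res} cited below), and along such a connected family the resolutions $\wt Y_{t'}$ form a smooth family of \hk manifolds, hence all deformation equivalent. Since $T'$ is connected, this gives the claim for every $t$.

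The main obstacle I anticipate is the interplay between the relative mmp and the \emph{fiberwise} geometry: ensuring that the relative minimal model $Y'$ restricts on each closed fiber to precisely the symplectic variety $M_t$ (rather than to some other birational model, or to a reducible or non-normal degeneration), and that the necessary base change $T' \to T$ suffices to kill all the pathologies (multiple components, non-reduced structure, monodromy on the set of components). This is handled in \cite{KLSV} by a careful analysis of how mmp-steps over the base interact with the special fiber, using that steps are isomorphisms away from the relative canonical locus, which is contained in the non-reduced/extra components of the special fibers; I would follow that analysis closely. The second, more technical, obstacle is the propagation of smoothness of the symplectic resolution across the family: one needs that $Y' \to T'$ is, after possibly a further small modification, a \emph{locally trivial} or at least equisingular family of symplectic varieties near the fibers, so that the results of \cite{Namikawa-def} on deformations of symplectic resolutions apply uniformly; the statement of Proposition \ref{prop-sm-res} (invoked but not yet stated in the excerpt) is presumably exactly the tool that packages this, and I would reduce to it.
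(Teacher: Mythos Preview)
Your overall architecture is exactly the paper's: invoke \cite[Thm 2.1]{KLSV} (which you recapitulate rather than cite) to produce a relative minimal model $Y' \to T'$ with irreducible $K$-trivial fibers having canonical (and generically terminal) singularities, then upgrade ``$K$-trivial with canonical singularities'' to ``symplectic variety'', and finally reduce the ``moreover'' to Proposition~\ref{prop-sm-res}. The first and third steps are fine.

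The gap is in the middle step, where you argue that each fiber is a symplectic variety. You speak of ``the reflexive symplectic $2$-form on $Y'$'' and invoke ``the same degeneracy-locus argument used in the proof of Theorem~\ref{thm1'}''. But there is no symplectic form on the total space $Y'$: the fibers are even-dimensional, so $Y'$ is odd-dimensional, and the degeneracy-locus argument of Theorem~\ref{thm1'} (which identifies $K_{\ov M}$ with the degeneracy divisor of a global $2$-form on $\ov M$) simply does not apply. The hypothesis here is purely fiberwise --- each $M_t$ carries its own symplectic form --- and nothing in the setup glues these into a relative $2$-form on $Y'$.

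The paper's fix is to argue fiberwise. Since $Y'_{t'}$ is birational to the projective symplectic variety $M_t$, Lemma~\ref{lemma-extsympl} (applied to $V=M_t$, $V'=Y'_{t'}$) transports the extendable symplectic form on $M_t$ to a \emph{generically} non-degenerate extendable reflexive $2$-form on $Y'_{t'}$. Now use the output of KLSV: $K_{Y'_{t'}}$ is trivial, so the top power of this $2$-form has no zeros, i.e.\ the form is non-degenerate everywhere. Remark~\ref{remextsympl} then gives that $Y'_{t'}$ is a symplectic variety. No form on the total space is needed.

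For the ``moreover'' you have the right endpoint (Proposition~\ref{prop-sm-res}) but you over-engineer the propagation. Once you know one fiber $Y'_{t_0}$ admits a symplectic resolution, every other fiber $Y'_{t'}$ sits in the flat family $Y' \to T'$ with $Y'_{t_0}$, so Proposition~\ref{prop-sm-res} applies directly to $Y'_{t'}$ (case: deforms to a symplectic variety admitting a symplectic resolution). No separate open-and-closed argument is required; the deformation-equivalence of the resolutions is already packaged inside the Namikawa argument used in the proof of Proposition~\ref{prop-sm-res}.
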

\begin{proof}
By Theorem 2.1 of \cite{KLSV},  up to a base change $p: T' \to T$, there exists a projective morphism $ Y' \to T' $ such that all fibers are irreducible $K$-trivial varieties with canonical singularities (and the general fiber has terminal singularities). Moreover, $Y'_{t'}$ is birational to the component of $ Y_{p(t')}$ that is birational to $M_t$. By Remark \ref{remextsympl} and Lemma \ref{lemma-extsympl},  $ Y'_t$ has a reflexive extendable holomorphic $2$-form that is generically non degenerate. Since $ Y'_t$ has trivial canonical class, the  extendable holomorphic $2$-form is actually non-degenerate and hence $ Y'_t$ is a symplectic variety. 
The final statement now follows from Proposition \ref{prop-sm-res} below.
\end{proof}

\begin{prop} \label{prop-sm-res}
Let $M$ be a projective symplectic variety. If $M$ is birational to a smooth projective \hk manifold, or $M$ can be deformed in a flat family to a projective symplectic variety that admits a symplectic resolution, then $M$ admits a symplectic resolution.
\end{prop}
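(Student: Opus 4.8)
The plan is to reduce to the known birational case via a deformation argument. By Proposition \ref{prop-smbir}, a projective $\Q$-factorial terminal symplectic variety that is birational to (or deformation equivalent to) a smooth \hk manifold is itself smooth; so the strategy is to replace $M$ by a $\Q$-factorial terminalization and then invoke that proposition. Concretely, first I would take a $\Q$-factorialization $M' \to M$, which exists by \cite{BCHM} and is crepant, so that $M'$ is a $\Q$-factorial terminal symplectic variety birational to $M$. A symplectic resolution of $M'$ gives a symplectic resolution of $M$ (compose, or rather: a symplectic resolution of $M$ exists iff one of $M'$ does, since both dominate a common resolution and the minimal model program for symplectic varieties — or directly Proposition \ref{prop-smbir} applied fiberwise — matches them up). Thus it suffices to produce a symplectic resolution of $M'$, equivalently to show $M'$ is smooth.

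For the first hypothesis (``$M$ is birational to a smooth projective \hk manifold''): then $M'$ is a $\Q$-factorial terminal symplectic variety birational to a smooth \hk manifold, so $M'$ is smooth by Proposition \ref{prop-smbir}(first statement), hence $M' \to M$ is the desired symplectic resolution. For the second hypothesis (``$M$ deforms in a flat family to a projective symplectic variety $N$ admitting a symplectic resolution''): here the subtlety is that $N$ need not be $\Q$-factorial terminal, so I cannot directly apply Proposition \ref{prop-smbir}. The plan is to spread out the $\Q$-factorialization over the base of the family. Let $\mc M \to T$ be the flat family with $\mc M_0 = M$ and $\mc M_1 = N$, $T$ a smooth curve. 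After shrinking $T$ and a base change, run a relative $\Q$-factorialization / relative MMP over $T$ (as in the proof of Theorem \ref{thm1'}, using \cite{BCHM} and \cite{KLSV}) to obtain $\mc M' \to T$ whose fibers are $\Q$-factorial terminal symplectic varieties, with $\mc M'_t$ birational to $\mc M_t$ for all $t$; in particular $\mc M'_0$ is birational to $M$ and $\mc M'_1$ is birational to $N$. Since $N$ admits a symplectic resolution, $N$ is birational to a smooth \hk manifold $\wt N$, hence $\mc M'_1$ is birational to $\wt N$, hence $\mc M'_1$ is smooth by Proposition \ref{prop-smbir}. Now $\mc M'_0$ and $\mc M'_1$ are fibers of the flat family $\mc M' \to T$ of $\Q$-factorial terminal symplectic varieties, so $\mc M'_0$ is deformation equivalent to the smooth \hk manifold $\mc M'_1$, whence $\mc M'_0$ is smooth by Proposition \ref{prop-smbir}(second statement). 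Finally $\mc M'_0$ is birational to $M$, so $M$ is $\Q$-factorial terminal (after a terminalization) birational to a smooth \hk manifold, and Proposition \ref{prop-smbir} once more gives that its terminalization $M'$ is smooth, i.e.\ $M$ admits a symplectic resolution.

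The main obstacle I anticipate is the relative $\Q$-factorialization step in the second case: one needs that running the MMP in families over $T$ produces fibers that are simultaneously $\Q$-factorial, terminal, and $K$-trivial symplectic, with controlled birational type on each fiber — precisely the content packaged in Theorem 2.1 of \cite{KLSV} (and reproven as Proposition \ref{prop-klsvplus} here), so really the work is in citing it correctly and checking that the extendable-form hypotheses propagate, which they do by Remark \ref{remextsympl} and Lemma \ref{lemma-extsympl}. A secondary point to be careful about is that ``admits a symplectic resolution'' must be transferred along birational equivalence of symplectic varieties with $\Q$-factorial terminal models — but two $\Q$-factorial terminal symplectic varieties that are birational are connected by a sequence of Mukai flops (Kawamata \cite{Kawamata-flops}, as used in Proposition \ref{prop-smbir}), so smoothness, hence existence of a symplectic resolution, is a birational invariant within this class, closing the argument.
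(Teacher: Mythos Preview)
Your treatment of the birational case is the same as the paper's: take a $\Q$-factorial terminalization (not just a $\Q$-factorialization) $\wt M \to M$ and apply Proposition~\ref{prop-smbir}.

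In the deformation case, however, your argument has a genuine gap. You claim that a relative MMP over $T$ produces $\mc M' \to T$ \emph{all of whose fibers} are $\Q$-factorial terminal symplectic varieties. Neither \cite[Thm.~2.1]{KLSV} nor the first part of Proposition~\ref{prop-klsvplus} gives this: they yield fibers with canonical singularities, and only the \emph{general} fiber is terminal. Since the two fibers you care about, $\mc M'_0$ (birational to $M$) and $\mc M'_1$ (birational to $N$), are a priori special, you cannot apply Proposition~\ref{prop-smbir} to either of them. If you try to repair this by taking a terminalization of a special fiber and asking whether it still deforms to the smooth general fiber, you are led straight to Namikawa's result on $\Def(\wt N) \to \Def(N)$ --- which is exactly what the paper invokes directly, bypassing the relative MMP altogether. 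Note also that citing Proposition~\ref{prop-klsvplus} here is delicate: its ``Moreover'' clause is proved \emph{using} Proposition~\ref{prop-sm-res}, so you may only appeal to items (1)--(3).

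The paper's proof of the deformation case is shorter and different in spirit: since $N$ admits a symplectic resolution $\wt N \to N$, Namikawa's theorem \cite[Thm.~2.2]{Namikawa-def} gives that $\Def(\wt N) \to \Def(N)$ is finite surjective and $N$ is smoothable; hence $M$, being in the same flat family, is smoothable as well; then \cite[Cor.~2]{Namikawa} forces the terminalization $\wt M$ to be smooth. No spreading-out of terminalizations over $T$ is needed.
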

\begin{proof}
By \cite[Cor. 1.4.3]{BCHM}, $M$ admits a $\Q$-factorial terminalization, i.e., a crepant birational projective morphism $t: \wt M \to M$ such that $\wt M$ is $\Q$-factorial and terminal. Since $t$ is crepant, $\wt M$ is a symplectic variety. 

Suppose that $M$ is birational to a smooth projective \hk manifold. Then so is $\wt M$, and hence by Proposition \ref{prop-smbir} it is smooth and $t: \wt M \to M$ is a symplectic resolution.

Suppose that $\mc M \to T$ is a flat family,  that $M=\mc M_{t_1}$ for some $t_1 \in T$, and  that $N:=\mc M_{t_2}$ admits a symplectic resolution $\wt N \to N$. For a projective variety $Y$, denoted by $\Def(Y)$ the Kuranishi or deformation space of $Y$. By Theorem 2.2 of \cite{Namikawa-def}, $\Def(\wt N) $ and $ \Def(N)$ are smooth of the same dimension, there is a finite surjective morphism $\Def(\wt N) \to \Def(N)$, compatible with the universal families and such that $N$ can be smoothed by a flat deformation. As a consequence $M$ can be smoothed by a flat deformation. By \cite[Cor. 2]{Namikawa}, $\wt M$ is smooth.

\end{proof}


\section{The relative Albanese} \label{section-Alb}

The aim of this section is to construct the relative Albanese fibration associated to a Lagrangian fibration, which will be used in Section \ref{section_torsors}. The existence of this fibration extends results by Markushevich \cite{Markushevich} and Arinkin-Fedorov \cite{Arinkin-Fedorov} and is a result of independent interest. The proof we give here amounts to the Arnold-Liouville theorem and is essentially due to Markushevich. See Remark \ref{rem albanese literature} below for more precise remarks on the literature concerning this group.

Let $f: X \to B$ be a Lagrangian fibration. Following \cite{Markushevich}, we introduce the following notation: We let $X^{nc} $ be the non-critical locus of the map $f$, i.e., the open subset of $X$ where the rank of the differential of $f$ is maximal. We set $B^{nm}=f(X^{nc}) \subset B$ to be the open subset of $b \in B$ such that $X_b$ is non-multiple, i.e. such that $X_b$ has a reduced component. Then $B^{nm}$ is also the largest open subset of $B$ over which there exist (\'etale) local sections.
Since both $X^{nc}$ and the morphism $X^{nc} \to B^{nm}$ are smooth, so is $B^{nm}$.

\begin{thm} \label{thm alb}
Let $f: X \to B$ be a Lagrangian fibration, with $f$ projective. Then there exists a separated smooth commutative algebraic  group scheme $a: A \to B^{nm}$ with connected fibers, with an action $A \times_{B^{nm}} X_{B^{nm}} \to X_{B^{nm}} $. Moreover, the pair $(X_{B^{nm}} , A)$ is a $\delta$-regular weak abelian fibration and the action of $A$ on $X_{B^{nm}} $ makes $X^{nc} \to {B^{nm}} $ into an almost $A$-torsor (Definition \ref{defin-almost}).
\end{thm}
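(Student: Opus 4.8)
The plan is to construct $A \to B^{nm}$ fiberwise as a relative Albanese, using the Arnold–Liouville theorem to get local models, and then to glue. First I would work over the non-critical locus: for $b \in B^{nm}$ pick a reduced component $X_b^0$ of the fiber $X_b$, which is a compact complex torus by the classical Arnold–Liouville argument (the holomorphic symplectic form $\sigma$ trivializes the relative tangent bundle along a Lagrangian torus, so $X^{nc} \to B^{nm}$ is, analytically locally near a section, a family of complex tori with a flat structure). The natural candidate for the fiber $A_b$ is the connected automorphism group $\Aut^0(X_b^0)$ acting by translations, equivalently $H^0(X_b, \mathcal{O})$ modulo the period lattice; the point is that over $B^{nm}$ these assemble into a relative group. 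Concretely I would produce $A$ as the relative identity component of the Picard scheme $\underline{\Pic}^0_{X^{nc}/B^{nm}}$ dualized, or more directly as the group of vertical translations: étale-locally $X^{nc} \to B^{nm}$ has a section, giving an identification of $X^{nc}$ with a family of tori, and $A$ is the "Jacobian" of this family. Over the overlaps, the two local group structures agree because they are intrinsically the connected component of the automorphism group of the fiber preserving $f$, so they glue to a global separated smooth group scheme $a: A \to B^{nm}$ with connected fibers, and the translation action $A \times_{B^{nm}} X_{B^{nm}} \to X_{B^{nm}}$ is likewise intrinsic and hence glues. (For points of $B^{nm}$ where $X_b$ is non-reduced or reducible, $A$ still acts on $X_{B^{nm}}$, translating each reduced component; $X^{nc}$ is exactly the part on which the action is simply transitive on fibers, i.e. a torsor.)

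Next I would check the two structural claims. That $X^{nc} \to B^{nm}$ is an \emph{almost $A$-torsor} in the sense of Definition~\ref{defin-almost} is essentially the construction: over the open dense locus of $B^{nm}$ where fibers are abelian varieties it is a genuine torsor by Arnold–Liouville, and the étale-local sections over all of $B^{nm}$ give local trivializations compatible with $a$, so the action is simply transitive on $X^{nc}$ fiber by fiber. For the $\delta$-\emph{regular weak abelian fibration} property (in the sense of \cite[\S 2.2]{dCRS}) I would verify the three conditions: (i) $A \to B^{nm}$ is a smooth commutative group scheme with connected fibers — done above; (ii) the action is compatible with a polarization / the Tate module local system is polarizable — this follows because $X \to B$ is a Lagrangian fibration, so the general fiber is a polarized abelian variety and the monodromy preserves the symplectic (Weil) pairing, the polarization coming from an ample class on $X$ restricted to fibers; (iii) the $\delta$-regularity inequality $\dim_b A - \dim \delta^{-1}(\delta(b)) \le \dim(\text{affine part of } A_b)$ relating the dimension of the affine part of the fibers of $A$ to the codimension of the corresponding stratum — this is where the Lagrangian (rather than just abelian-fibration) hypothesis is used: Matsushita-type results force the affine parts to be as small as the base stratification allows, and I would deduce $\delta$-regularity exactly as in the Hitchin/compactified-Jacobian cases treated in \cite{dCRS, Markushevich, Arinkin-Fedorov}.

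The main obstacle I expect is \textbf{separatedness and representability of $A$ as an algebraic group scheme}, not just as an analytic family of tori or a sheaf of abelian groups — this is precisely the gap between the present statement and the prior work of Markushevich and Arinkin–Fedorov, and the point that \cite{Abasheva-Rogov, Abasheva} sidestepped by treating only the sheaf of vertical automorphisms. Separatedness can fail a priori because the identity section might not be closed if fibers degenerate badly; I would address this by a careful analysis of the fibers of $a$ over $B^{nm} \setminus B^{ab}$ (where $B^{ab}$ parametrizes abelian-variety fibers), showing the group scheme is a semiabelian-type degeneration with no "extra" components — using that $X_b$ has a reduced component and that $X^{nc}$ is fiberwise a torsor to control the structure of $A_b$, and invoking the projectivity of $f$ to get properness inputs. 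The secondary difficulty is the gluing argument itself: one must check the étale-local group structures are canonically identified on overlaps and that the cocycle obstruction vanishes, which I would handle by identifying $A$ with an intrinsically-defined object (the relative connected automorphism group of $f$, or the relative Albanese of a resolution of $X^{nc}$) so that no choices enter.
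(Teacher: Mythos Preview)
Your overall strategy---Arnold--Liouville to get local group structures on $X^{nc}$ after choosing a section, then gluing via an intrinsic identification---matches the paper's, and you correctly identify separatedness/representability as the central difficulty and even name the right fix (identify $A$ with the relative connected automorphism group). But the paper uses that identification as the \emph{primary} construction, not a fallback: it defines $A$ from the outset as a locally closed subscheme of $\Aut(X/B)$, which is open in the relative Hilbert scheme and therefore automatically separated and algebraic. The \'etale-local Arnold--Liouville argument is then used to carve out the correct open piece $A_V \subset \Aut(X_V/V)^\circ$ and to check it is independent of the section (via $a_1 = a_2 \circ t_{s_1 - s_2}$), so that these pieces descend. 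Your proposal inverts the logic and thereby leaves the gluing/separatedness genuinely open.

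There is also a real gap in your outline: the passage from analytic to algebraic. Arnold--Liouville gives you $(\mathbb{C}^n \times B)/\Lambda \cong X^{nc,*}$ only as a \emph{holomorphic} group over $B$; you never explain why the group laws are algebraic. The paper handles this (following Markushevich) by observing that over the locus where $X^{nc,*} \to B$ is proper, GAGA makes the group structure algebraic, so the multiplication and inverse are rational maps on the whole of $X^{nc,*}$; since they extend holomorphically everywhere, they extend algebraically. Without this step your $A$ is only an analytic object and the later claims (almost-torsor, weak abelian fibration) don't get off the ground. Your alternative suggestion via $\underline{\Pic}^0$ dualized is a detour: the paper treats the relative Picard only \emph{after} $A$ is constructed (Corollary~\ref{cor-pic}), and in any case representability of $\Pic$ over the bad locus of $B^{nm}$ is not available for free.

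Finally, for the weak abelian fibration properties the paper is more specific than your sketch: affineness of stabilizers uses \cite[Lemma 5.16]{Arinkin-Fedorov} (reduce to one point per fiber, where stabilizers are trivial on $X^{nc}_b$), polarizability of Tate modules is \cite{Ancona-Fratila}, and $\delta$-regularity is \cite[Prop.~2.3.2]{dCRS}. Your appeal to ``Matsushita-type results'' for $\delta$-regularity is vague; the actual input is the Lagrangian identification $T_{X/B} \cong f^*\Omega_B$.
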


Recall that a  weak abelian fibration is a pair $(M,P)$ of schemes over $S$ of the same pure relative dimension, where $M \to S$ is a proper morphism, $P \to S$ is a smooth commutative group scheme with polarizable Tate modules acting on $M$ with affine stabilizers. Such a triple is called $\delta$-regular if, furthermore, the rank of the affine part of the group $P_s$, $s \in S$, jumps by $\delta$ in codimension $\ge \delta$. We refer the reader to \cite[\S 2.2]{dCRS} for more details.

\begin{defin} \label{def-rel-alb}
Let $f: X \to B$ be a Lagrangian fibration, with $f$ projective and with the property that $B=B^{nm}$. The algebraic group scheme $a: A \to B$ of Theorem \ref{thm alb} is called the Albanese family or the relative Albanese fibration of the Lagrangian fibration $f: X \to B$. \end{defin}

\begin{rem} \label{rem albanese literature}
The existence of the relative Albanese fibration acting on $X$ was proved in \cite[Thm 2]{Arinkin-Fedorov} in the case $f: X\to B$ has integral fibers. In the influential paper \cite[Prop. 2.1]{Markushevich},  constructs $A \to B^{nm}$  as an algebraic space  acting on the open subset $X^{nc}$, and then shows that it is algebraic over an open subset containing codimension one points. Below, we give a more detailed proof of this result and show that the action on $X^{nc}$ extends to all of $X$ and, furthermore, that  $(X, A)$ is a $\delta$-regular weak abelian fibration. The existence of a smooth relative group scheme acting on a Lagrangian fibration that admits local sections has also been obtained recently by Kim \cite{Kim-Neron}, with completely different methods.
Viewed only as a sheaf of commutative groups, the Albanese fibration also appears in \cite[\S 2.2]{Abasheva-Rogov}. 
\end{rem}

\begin{rem}
At this time, we have not pursued the extension of $A$ over the whole $B$.
\end{rem}

\begin{defin} \label{defin-almost}
Let $ G \to S$ be a smooth group scheme of finite type over a scheme $S$ and let $X \to S$ a  quasi-projective scheme over $S$ with an action of $G$ over $S$. The $G$-scheme $X$ is called an \emph{almost} G--torsor if for any \'etale multisection $s: V \to X$ of $X  \to S$, the open subscheme $X^*_V\subset X_V$ parametrizing fibers meeting the $V$-section $s_V: V \to X_V$ is a (trivial) $G$-torsor.
\end{defin}

Thus, up to \'etale base change, $X$ is covered by open subsets which are trivial $G$-torsors. The following is a simple example of an almost-torsor that is not a torsor.

\begin{example}
Let $f: S \to \P^1$ be an elliptic K3 surface, and let $G \to \P^1$ be the identity (irreducible) component of the relative automorphism group. Then $G \to \P^1$ is a smooth commutative group with connected fibers acting on $S$. The open subset $S^\circ \subset S$ of regular points of $f$ is an almost $G$-torsor, but is not a $G$-torsor as soon as $f$ has reducible fibers because then $S^\circ \to \P^1$ has disconnected fibers.
\end{example}

We start with the following intermediate result. This is essentially  \cite[Proposition 2.1]{Markushevich}. We give a complete proof because it will be needed in the proof of Theorem \ref{thm alb}.

\begin{prop} \label{prop arnold-liouville}
Let $f: X \to B$ be a Lagrangian fibration, let $V \to B^{nm}$ be the map induced by an \'etale multisection, and let $X^{nc,*}_V \subseteq X_V$ be the open subset of the base change parametrizing fibers of $X^{nc}_V$ meeting the corresponding section $s$ of $X_V \to V$. Then  $X^{nc,*}_V \to V$ is a smooth commutative algebraic group scheme with connected fibers and with zero section given by $s$.

\end{prop}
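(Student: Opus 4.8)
The plan is to run the classical Arnold--Liouville argument in the relative algebraic setting, using the \'etale section $s$ to trivialize the period lattice. First I would work locally on $V$ in the \'etale (or analytic) topology: shrinking $V$, I may assume the family $X^{nc}_V \to V$ is a smooth proper-over-the-general-point family of abelian varieties-to-be, and that $s$ picks out a connected component in each fibre, so that $X^{nc,*}_V \to V$ has connected fibres by construction. The symplectic form $\sigma$ on $X_V$ (pulled back from $M_U$) restricts to zero on the fibres since they are Lagrangian, so contraction with $\sigma$ identifies the relative tangent sheaf $T_{X^{nc,*}_V/V}$ with $a^*\Omega^1_V$, exactly as in the proof of Proposition \ref{prop_nondeg}; this is the key bundle isomorphism that produces the group structure.

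Next I would use this identification to build the group law. The sheaf $T_{X^{nc,*}_V/V} \cong a^*\Omega^1_V$ carries a flat connection (the Gauss--Manin connection, or equivalently: closedness of $\sigma$ forces the ``action one-forms'' to be closed, giving the commuting Hamiltonian vector fields of Arnold--Liouville), so the fibrewise-tangent bundle is a local system of abelian Lie algebras $\mathfrak{a}$, and $X^{nc,*}_V \to V$ is, fibre by fibre, a quotient of the total space of $\mathfrak{a}$ by a lattice subbundle $\Lambda \subset \mathfrak{a}$ — the relative period lattice, which is well-defined precisely because the section $s$ gives a zero. Translation by $\mathfrak{a}$ (flowing along the commuting Hamiltonian vector fields) descends to an action making $X^{nc,*}_V$ a group scheme with identity $s$; commutativity is built in, and smoothness over $V$ holds because $X^{nc}$ and $X^{nc}\to B^{nm}$ are smooth. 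That the group structure is \emph{algebraic}, not merely complex-analytic, follows because the section and the action are algebraic data: the multiplication map is a morphism of schemes since it is defined by an algebraic correspondence (translation of fibres), or alternatively one invokes that over each fibre the structure is that of a semi-abelian variety and a relative version of algebraization applies; separatedness is automatic once one knows the fibres are separated group schemes and $V$ is separated.

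The main obstacle I expect is not the existence of the group law fibrewise — that is genuinely Arnold--Liouville — but rather verifying that the multiplication and inversion maps are \emph{algebraic morphisms} over $V$ globally (a priori they are only holomorphic), and checking separatedness of the resulting group scheme. The cleanest route is: (i) use the $\sigma$-induced isomorphism $T_{X^{nc,*}_V/V}\cong a^*\Omega^1_V$ to get the analytic group structure as above; (ii) observe that the group structure is uniquely determined by the pair (Lagrangian fibration, section $s$), hence is compatible with the algebraic structure on a Zariski-dense open set where classical results (e.g.\ N\'eron models of the generic abelian variety, or the semi-abelian structure away from the bad locus) already supply an algebraic group; (iii) invoke a Riemann-existence/GAGA-type extension or the rigidity of group-scheme structures to conclude the group law is algebraic everywhere on $V$. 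Separatedness then follows from the separatedness of $X_V$ and of the fibres. I would flag that the honest algebraicity statement may be cited from \cite{Markushevich} (Proposition 2.1) or deduced from the weak-abelian-fibration formalism of \cite{dCRS}, reserving the detailed verification for the proof of Theorem \ref{thm alb}.
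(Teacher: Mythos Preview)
Your approach is essentially the paper's: build the analytic group structure via Arnold--Liouville (commuting Hamiltonian vector fields coming from the Lagrangian condition and the identification $T_{X/B}\cong f^*\Omega_B$), then algebraize. Two points where the paper is sharper than your sketch. First, completeness of the flows is not automatic on $X^{nc}_V$, whose fibers are non-compact; the paper integrates the commuting vector fields on the full $X_V$ (whose fibers are proper, so the local flows extend to a global $\mathbb{C}^n$-action by Kaup), and then observes that the orbit of any point of $X^{nc}_V$ stays in $X^{nc}_V$ because the stabilizer, hence the rank of the orbit map, is constant along a $\mathbb{C}^n$-orbit. Second, for algebraicity the paper bypasses N\'eron models and rigidity entirely: by GAGA the group law is algebraic over the open locus where $X^{nc,*}_V \to V$ is proper, so multiplication and inversion are already \emph{rational} maps on $X^{nc,*}_V$; since these rational maps extend holomorphically (by the analytic construction), their Zariski-closed indeterminacy loci are empty and they extend as morphisms. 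This is the concrete content behind your step (iii), and it renders the detours through N\'eron models or group-scheme rigidity unnecessary.
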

\begin{proof}
This is essentially the Arnold-Liouville theorem in the analytic category.
To simplify notation, up to restricting $B$ we can assume that  $B=B^{nm}$. 

Since $f$ is a Lagrangian fibration, there is an isomorphism $ f^* \Omega_B \cong T_{X|B} $ (see \cite[Lemma 2.3.1]{dCRS} for more details\footnote{If $B$ is not smooth, the proof of \cite[Lemma 2.3.1]{dCRS} is not correct. Since we have already observed that $B^{nm}$ is smooth, we do not have to worry about this.}.)
In particular, up to restricting $B$, we can assume that $\Omega_B$ is free, and hence that the subsheaf $f^* \Omega_B \cong T_{X/B}  \subset T_{X}$ is free.
Let $X_1, \dots, X_n$ be global vector fields trivializing $f^* \Omega_B $. 
We claim that these vector fields induce a holomorphic action $a: \mathbb C^n \times X \to X$ (which is the same as a relative action of $ \mathbb C^n \times_B X$ on $X$ over $B$). Indeed, since the vector fields are tangent to the fibers of $f$, which are compact because $f$ is proper, for each $i$ the local flow extends to a global flow defining an action  of $\mathbb C$ on $X$ (cf. Korollar pg 81 Kaup \cite{Kaup}). Since $f$ is Lagrangian, the vector fields pairwise commute,  i.e. $[X_i,X_j]=0$, for every $i, j$ (cf. \cite[Proposition 1]{Beauville-intsystem}. Thus, the action of the individual vector fields extends to a holomorphic action of $\mathbb C^n$ on $X$.

To simplify the notation, up to passing to an \'etale base change, we can assume that $X \to B$ has a section $s$. 
Let
\[
o_s: \mathbb C^b \times B \to X^{nc}
\]
be the orbit morphism of the section  $s: B \to X^{nc}$. 
On the open subset $X^{nc}$ where the rank of $f$ is maximal, the $X_i$ span the relative tangent bundle $ T_{X^{nc}|B} $, which is a subbundle  of $T_{X^{nc}}$. Thus, the orbit morphism $o_s$ is smooth of relative dimension $0$. In particular, the relative stabilizer subgroup $\Stab (s(B)) \to B$ is a discrete subgroup of the trivial group scheme $\mathbb C^n \times B$. 
We claim that $\Stab (s(B)) \subset \mathbb C^n \times B$ does not depend on the chosen section. Indeed, let  $s_1$ and $s_2$ be two sections of $X^{nc} \to B$, and consider the two closed subgroups  $\Stab (s_1(B)), \Stab (s_2(B)) \subset \mathbb C^n \times B$. 
Since the morphism $\Stab (s_1(B)), \Stab (s_2(B)) \to B$ are smooth and hence admit local sections, it suffices to show that the two subgroups agree over an open subset of $B$, but this is clear since over an open subset the fibers of $f$ are abelian varieties. 
The statement about  the connected components of the fibers of  $X^{nc} \to B$ now follows easily, since every connected component  is an orbit and since all open orbits in the same fiber have the same stabiliser.

Set $\Lambda:=\Stab(s(B))$, then $(\mathbb C^n \times B) \slash \Lambda$ is a smooth commutative \emph{analytic} group over $B$ and the morphism $o_S$  determines a biholomorphism $(\mathbb C^n \times B) \slash \Lambda \cong X^{nc,*}$. Here, $X^{nc,*} \subset X^{nc}$ is the union of all the connected component of the fibers that meet the image of the section. 
This subset is open because $X^{nc} \to B$ is flat.

We claim that $X^{nc,*} \to B$ is a smooth commutative \emph{algebraic} group scheme over $B$. We only need to prove that the group laws determined by the isomorphism with $(\mathbb C^n \times B) \slash \Lambda$ are algebraic. We follow an idea of Markushevich:
 By GAGA, the group structure of $X^{nc,*}$ over $B$  is algebraic over the locus where $X^{nc,*} \to B$ is proper, hence the holomorphic mappings defining the group structure are rational algebraic maps. Since analytically they extend, they extend algebraically, and thus
 $X^{nc,*} \to B$ is an algebraic group over $B$.

 \end{proof}

 \begin{rem} \label{rem-gamma-ar} 
 In the first version of this article it was claimed without a complete proof that the group structure extended to all of $X^{nc}$ (thanks to A. Abasheva for pointing this out). This statement is wrong in general, as there are examples of Lagrangian fibrations with sections which have the following property: the birational involution $-1$ determined by the group structure on the smooth fibers is not a regular morphism and  its indeterminacy locus intersects $X^{nc}$ non-trivially. Examples of this are given in \cite[\S 3]{ASF}. They are degree-$0$ Beauville-Mukai systems on a linear system $|H|$ on a K3 surface $S$ with reducible members, where the stability is considered with respect to a general polarization. The $-1$-involution is a birational involution that is not biregular. By \cite[Remark 3.12]{IntJac} and \cite[\S 6]{Mongardi-Onorati}, examples of this phenomenon are given also by the intermediate Jacobian fibration associated with a cubic fourfold containing a plane or a cubic scroll.


 \end{rem}

 \begin{rem} \label{rem-sing-alb}
 If $f: X \to B$ is a Lagrangian fibration on a singular $\Q$-factorial terminal symplectic variety, then $f(\Sing (X)) \subset B$ has codimension $\ge 2$ \cite[Lemma 3.2]{Matsushita-base}. Hence, if $X \to B$ does not have multiple fibers in codimension $1$, we can define the Albanese fibration of $X \to B$ as the Albanese fibration of the restriction of $X$ to $B \setminus f(\Sing (X))$.
 \end{rem}

 The  circle of ideas used above also shows that the relative Albanese fibration of a Lagrangian fibration $X \to B$ with $B=B^{nm}$ can be characterized as the unique (up to isomorphism) smooth commutative group scheme with connected fibers with a faithful action on $X \to B$ that makes $X^{nc}$ into an almost torsor. We record this in the following proposition.

\begin{prop} \label{prop_albuniversal} Let $X \to B$ be a Lagrangian fibration with $B=B^{nm}$. Let $A' \to B$ be a smooth commutative separated group scheme with connected fibers and suppose there is a faithful action of $A'$ on $X/B$ which makes $X^{nc}$ into an almost $A'$-torsor. Then $A'$ is isomorphic to the relative Albanese fibration $A \to B$ of $X \to B$.
\end{prop}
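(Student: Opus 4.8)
The plan is to produce, étale--locally over $B=B^{nm}$, a canonical isomorphism between $A'$ and $A$ by comparing the two almost--torsor structures on $X^{nc}$ through a common section, and then to descend it over $B$. Since $X^{nc}\to B^{nm}$ is smooth and surjective, I can choose an étale cover $\{V_i\to B\}$ together with sections $s_i\colon V_i\to X^{nc}$ of $X^{nc}_{V_i}\to V_i$. Fix one such pair $(V,s)$ and let $X^{nc,*}_V\subseteq X_V$ be the open subscheme of fibers meeting $s$. Because $X^{nc}$ is an almost $A$--torsor (Theorem \ref{thm alb}) and, by hypothesis, an almost $A'$--torsor, the scheme $X^{nc,*}_V$ is simultaneously a trivial $A_V$--torsor and a trivial $A'_V$--torsor, $s$ being in both cases the distinguished section. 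Hence the two orbit morphisms $o^A_s\colon A_V\to X^{nc,*}_V$, $a\mapsto a\cdot s$, and $o^{A'}_s\colon A'_V\to X^{nc,*}_V$, $a'\mapsto a'\cdot s$, are isomorphisms of $V$--schemes carrying the identity sections to $s$, and I set $\psi_s:=(o^A_s)^{-1}\circ o^{A'}_s\colon A'_V\xrightarrow{\ \sim\ }A_V$. A priori $\psi_s$ is only an isomorphism of pointed $V$--schemes, characterized by $\psi_s(a')\cdot s=a'\cdot s$.

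The key point, which I expect to be the main obstacle, is to show that the $A$--action and the $A'$--action on $X$ commute. It is enough to verify this over the dense open $U\subseteq B$ over which $f$ has abelian variety fibers: the two morphisms $A\times_B A'\times_B X\to X$ given by $(a,a',x)\mapsto a\cdot(a'\cdot x)$ and $(a,a',x)\mapsto a'\cdot(a\cdot x)$ then agree on the dense open preimage of $U$ in the irreducible scheme $A\times_B A'\times_B X$, and $X$ is separated. Over $U$, the morphism $X|_U\to U$ is a torsor under the abelian scheme $A|_U$; for $b\in U$, each $a'\in A'_b$ acts on $X_b$ as an automorphism of this torsor, and after choosing a point to identify $X_b$ with $A_b$ the group $\Aut(X_b)/A_b\cong\Aut_{\mathrm{grp}}(A_b)\subseteq\End(A_b)$ is discrete. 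Since the class of the automorphism defined by $a'$ varies locally constantly with $a'\in A'_b$, is trivial at $a'=0$, and $A'_b$ is connected, every $a'\in A'_b$ acts on $X_b$ by a translation of the $A_b$--torsor; translations commute with the $A_b$--action because $A_b$ is commutative. This proves the commutation over $U$, hence everywhere.

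Granting this, I claim $\psi_s$ is characterized by the stronger identity $\psi_s(a')\cdot x=a'\cdot x$ for \emph{all} $x\in X^{nc,*}_V$. Indeed, over the abelian--fiber locus every point of $X^{nc,*}_V$ is of the form $a\cdot s$ with $a\in A_V$, and $\psi_s(a')\cdot(a\cdot s)=a\cdot(\psi_s(a')\cdot s)=a\cdot(a'\cdot s)=a'\cdot(a\cdot s)$, using that $A$ is commutative and that the $A$-- and $A'$--actions commute; so the two morphisms $A'_V\times_V X^{nc,*}_V\to X^{nc,*}_V$ agree on a dense open of an irreducible scheme with separated target, hence agree. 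This description of $\psi_s$ makes no reference to $s$, so $\psi_{s_i}=\psi_{s_j}$ on $V_i\times_B V_j$; and it forces $\psi_s$ to be a group homomorphism, because for $a'_1,a'_2\in A'_V$ both $\psi_s(a'_1a'_2)$ and $\psi_s(a'_1)\psi_s(a'_2)$ act on $X^{nc,*}_V$ in the same way while the $A_V$--action there is faithful (it is simply transitive). By étale descent the $\psi_{s_i}$ glue to an isomorphism $\psi\colon A'\xrightarrow{\ \sim\ }A$ of group schemes over $B$, which is the asserted identification with the relative Albanese fibration.

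I expect everything except the commutation of the two actions to be routine bookkeeping with orbit maps and Definition \ref{defin-almost}. That step needs the rigidity--type input that a connected commutative group acting on a torsor under an abelian variety acts through translations, together with some care in propagating equalities of morphisms from the dense locus of abelian fibers to all of $B^{nm}$, where $A$ and $A'$ may have non--proper fibers; this is precisely where the separatedness and the connectedness of the fibers of $A$ and $A'$ are used.
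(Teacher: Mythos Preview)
Your argument is correct. The core idea---trivialize both almost-torsor structures on $X^{nc,*}_V$ via an \'etale section and compare---is the same as the paper's, but your packaging is more laborious than necessary.

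The paper proceeds as follows. Over the smooth-fiber locus $U\subset B$, both $A_U$ and $A'_U$ are canonically identified with $\Aut^0(X_U/U)$: the faithful action of $A'$ gives an injection $A'_U\hookrightarrow\Aut(X_U/U)$, which lands in the identity component because $A'$ has connected fibers, and is then an isomorphism by dimension. This produces a \emph{global} birational map $\phi\colon A'\dashrightarrow A$ over $B$. One then observes that, after an \'etale base change $V\to B$ trivializing via a section $s$, the composite $A'_V\cong X^{nc,*}_V\cong A_V$ coincides with $\phi$ on the dense open $U_V$; hence $\phi$ extends \'etale-locally to an isomorphism, and therefore globally.

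The gain is that once $\phi$ is fixed as a birational map, there is nothing to check on overlaps and no separate verification that the local isomorphisms are group homomorphisms: both follow because the local maps agree with $\phi$ on a dense open, and $\phi$ is visibly a group isomorphism over $U$. Your commutation argument---that each $a'\in A'_b$ acts on an abelian fiber $X_b$ by a translation because the quotient $\Aut(X_b)/A_b$ is discrete and $A'_b$ is connected---is precisely the content of the paper's identification $A'_U\cong\Aut^0(X_U/U)$, just unpacked. So you have rediscovered the same mechanism, but carried the bookkeeping (section-independence, homomorphism property, descent on overlaps) by hand rather than letting the birational map $\phi$ do it for you.
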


\begin{proof}
Let $U \subset B$ be the locus parametrizing smooth fibers. Then $A'_U \cong \Aut^0(X_U \slash U) \cong A_U$. In particular, there is a birational map $\phi: A' \dashrightarrow A$ over $B$.
 Let $V \to B$ be an \'etale base change determined by an \'etale section of $X^{nc} \to B$. The trivializations induced by this section give isomorphisms $ A_V \cong X^{nc,*}_V \cong A'_V$. It follows that \'etale locally the birational map $\phi$ extends to an isomorphism and hence that $\phi$ extends to an isomorphism.
\end{proof}

 \begin{proof}[Proof of Theorem \ref{thm alb}]  As above, in order to simplify notation we will assume that $B=B^{nm}$.
 Consider the relative automorphism group  $\Aut(X/B) \to B$. This is an open subset of the relative Hilbert scheme $ \Hilb (X \times_B X)$ and hence it is separated.
 We will define $A \to B$ as a locally closed subgroup scheme of $\Aut(X/B) \to B$.
  Let $\Aut(X/B)^\circ  \subseteq \Aut(X/B)$ be the irreducible component of $\Aut(X/B)$ containing the zero section (note that in general  there is no reason for this to be flat beyond codimension $1$, nor for this to be a subgroup scheme). If $U \subset B$ denotes the locus parametrizing the smooth fibers of $X \to B$, then $\Aut(X/B)^\circ$ is the closure in $\Aut(X/B)$ of the identity component $\Aut^0(X_U/U) \subset \Aut(X_U/U)$. Since for a family of abelian varieties taking the identity component of the relative automorphism group commutes with base change, the formation of $\Aut(X/B)^\circ $ commutes with \'etale base change. In other words, if $V \to B$ is an \'etale base change, then $(\Aut(X/B)^\circ )_V =\Aut(X_V/V)^\circ$.

  
 Let $V \to B$ be an \'etale base change with the property that $X^{nc}_V \to V$ has a section $s$. We will identify an open subset $A_V \subset \Aut(X/B)^\circ_V$ that is a smooth commutative group scheme over $V$ with connected fibers and then show that this open subset descends to a smooth commutative group scheme  $A \to B$ that satisfies all the desired properties.

In the proof of Proposition \ref{prop arnold-liouville} above we showed that $(\mathbb C^n \times B) \slash \Lambda_V\cong X_V^{nc,s}$ acts on $X_V/V$, where $\Lambda_V $ is a discrete subgroup. Thus it defines a morphism $a: X_V^{nc,s}  \to \Aut(X_V/V)$ of group schemes over $V$.  Here as usual  $X^{nc,s} \subset  X^{nc}$ is the open subset of fiber components meeting the section $s$. Note that since the $X_V^{nc,s} $ is irreducible, the image of $a$ lands in $\Aut(X_V/V)^\circ$.
The composition $b=o_s \circ a: X_V^{nc,s} \to X_V^{nc}$ of $a$ with the orbit map $o_s: \Aut(X/B)^\circ_V \to X^{nc}_V$  is the  natural open immersion (indeed, it is the identity on the open subset where $f$ is proper).  It follows that $a: X_V^{nc,s} \to  o_s^{-1}(X_V^{nc,s})$ and $o_s:o_s^{-1}(X_V^{nc,s}) \to X_V^{nc,s}$ are inverse of each other and thus isomorphisms.  We set $A_V:=o_s^{-1}(X_V^{nc,s}) \cong a(X_V^{nc,s}) \subset \Aut(X/B)^\circ_V$.

We claim that $A_V \subset \Aut(X/B)^\circ_V$ does not depend on the choice of the section. Indeed, suppose $s_1, s_2: V \to X^{nc}_V$ are two sections.
As already noted above and in the proof of Proposition \ref{prop arnold-liouville},  Zariski locally on $V$ the action of $X_V^{nc,s}$ on $X_V$ corresponds to the action of  $(\mathbb C^n \times B) \slash \Lambda_V$ on $X_V$. In particular, if $a_{i}: X_V^{nc,i} \to \Aut(X/B)^\circ_V$ is the morphism associated to the section $s_i$  as in the previous paragraph,  then $a_1=a_2 \circ t_{s_1-s_2}$.
In particular, the open subset $A_V \subset \Aut(X/B)^\circ_V$  does not depend on the chosen section and, if $n: \Aut(X_V/V)^\circ \to \Aut(X_B/B)^\circ$ denotes the \'etale base change map, then $A_V=n^{-1} (n(A_V))$.

We  define $A \subset \Aut(X/B)^\circ$ to be the union of the open subsets $n(A_V)$ over an \'etale covering of $B$ where $X^{nc}$ acquires a section. Then $A$ is a locally closed subset of $ \Aut(X/B)$. Since $A_U=\Aut^0(X_U/U) $, the group laws on $\Aut(X/B)$ restrict to rational group laws on $A$. As in the proof of Proposition \ref{prop arnold-liouville}, it suffices to check \'etale locally that these extend to regular morphisms. Since the base change of $A$ to $V$ is just $A_V$, this is clear. Note that by construction, $X \to B$ is an almost $A$-torsor. (Alternatively, one can show that for an \'etale cover $B'  \to B$ associated to the choice of an \'etale section over every point of $B$, the scheme $A_{B'}$, together with the restriction of a $B'$-ample line bundle that is the pullback on a line bundle on $\Aut(X \slash B)$ admits a descent datum, which is thus effective by \cite[\S 6.2/7]{BLR}.)

We now check that the pair $(X,A)$ is a weak abelian fibration.  The fact that the stabilizers of all points are affine can be shown in the following way. Since the fibers of $X \to B$ are connected, by Lemma 5.16 of \cite{Arinkin-Fedorov} it is enough to check that the stabilizers are affine on one point of each fiber. This follows from the fact that for every $b \in B$,  the action on $X^{nc}_b$ has trivial stabilizers. The condition on the Tate-modules follows from \cite{Ancona-Fratila}. The $\delta$-regularity follows from \cite[Proposition 2.3.2]{dCRS}.
\end{proof}

The next Proposition shows how to find extendable holomorphic symplectic forms on the relative Albanese variety of a Lagrangian fibration (cf. Definition \ref{def-rel-alb}) and, more generally, on torsors over the relative Albanese variety or isogenous group schemes). For related results in this direction, see Propositions 2.3 and 2.6 of \cite{Markushevich}, or \cite[Cor. 3.7]{Abasheva-Rogov}.

\begin{prop}\label{prop_gamma}
Let $X \to B$ be a Lagrangian fibration with holomorphic symplectic form $\sigma_X$ that is extendable. Let $A \to B^{nm}$ be the relative Albanese variety and let $P \to B^{nm}$ be a quasi-projective almost $A$-torsor. Then there exists a holomorphic symplectic form $\sigma_P$ on $P$ that is extendable.
\end{prop}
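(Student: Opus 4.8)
The plan is to produce the symplectic form $\sigma_P$ on $P$ by transporting $\sigma_X$ through the group-theoretic structure relating $X$ and $P$, then invoke the correspondence machinery of Section~\ref{section-cycle} to conclude extendability. First I would reduce to the case $P = A$: since $P$ is an almost $A$-torsor, over an \'etale cover $V \to B^{nm}$ on which $X^{nc}$ acquires a section, the open piece $P^*_V \subseteq P_V$ meeting the section is a trivial $A_V$-torsor, hence isomorphic (non-canonically) to $A_V \cong X^{nc,*}_V$. Thus $P$ and $A$ become \'etale-locally isomorphic over a codimension-$\ge 2$-complement open subset, and a reflexive form on one gives a reflexive form on the other on the common open locus; extendability, being detected after resolution and invariant under birational modification by Lemma~\ref{lemma-extsympl}, then passes between them. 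So the real content is constructing $\sigma_A$.

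For $\sigma_A$, the key observation is that $A$ acts on $X^{nc}$ making it an almost $A$-torsor, so $A$ and $X$ have the same relative dimension over $B^{nm}$ and, over $U \subset B^{nm}$ parametrizing smooth fibers, $A_U = \Aut^0(X_U/U)$ is literally the relative automorphism group; in particular each fiber $A_b$ for $b \in U$ is an abelian variety isogenous (indeed isomorphic after a translation) to $X_b$. I would build the form directly via a correspondence: the graph of the action map $a \colon A \times_{B^{nm}} X^{nc} \to X^{nc}$, or equivalently, after choosing an \'etale multisection $s$, the translation isomorphism $A_V \xrightarrow{\sim} X^{nc,*}_V$, furnishes a cycle $\Gamma$ relating $A$ and $X$. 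Pulling back $\sigma_X$ along this correspondence gives a holomorphic $2$-form $\sigma_A$ on (an open dense subset, hence all of, by reflexivity) $A$. Because over $U$ this is genuinely the pullback of $\sigma_X$ under a fiberwise-isomorphism that is the identity on the base, and because the fibers of $X_U \to U$ are Lagrangian abelian varieties, $\sigma_A$ is generically non-degenerate and the fibers of $A \to B^{nm}$ are isotropic; non-degeneracy on the smooth locus then follows either from Proposition~\ref{prop_nondeg} (the map $T_{A/U} \to a_U^*\Omega^1_U$ is an isomorphism because the corresponding map for $X$ is, and the two are identified by the translation isomorphism), or simply because a generically non-degenerate reflexive $2$-form on a variety with trivial canonical class — which $A$ has on the smooth-fiber locus — is non-degenerate.

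Extendability of $\sigma_A$ is where I would lean on Section~\ref{section-cycle}: by Proposition~\ref{propextmhs} (or Proposition~\ref{prop_cor_ext} and its proof), a closed holomorphic form defined by a correspondence from an extendable form on a smooth projective variety is again extendable, because the correspondence extends to a compactification and induces a morphism of mixed Hodge structures preserving $W_0 \cap F^k$. Concretely: take a smooth projective compactification $\wt X$ of $X$ on which $\sigma_X$ extends (Assumption~(b), available since $X \to B$ is a Lagrangian fibration with extendable form), a smooth projective compactification $\wt A$ of $A$, and the closure $\wt\Gamma$ in $\wt A \times \wt X$ of the correspondence cycle; then $\wt\Gamma^*$ is a morphism of Hodge structures of the appropriate bidegree and $\wt\Gamma^*(\sigma_X) \in H^0(\wt A, \Omega^2_{\wt A})$ restricts to $\sigma_A$, so $\sigma_A$ is extendable. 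Finally, transport this back from $A$ to $P$ via the \'etale-local isomorphism above, again using that extendability is a birational invariant of smooth projective models (Lemma~\ref{lemma-extsympl} and the discussion after Definition~\ref{def-ext}).

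The main obstacle I anticipate is the bookkeeping around the \'etale descent: the isomorphism $A_V \cong X^{nc,*}_V$ depends on the choice of section $s$, changing by a translation when $s$ changes, so one must check that the resulting form $\sigma_A$ is independent of that choice — and here one uses that translations on an abelian variety act trivially on holomorphic forms, so the locally-defined forms glue. A secondary subtlety is that $A$ is only defined over $B^{nm}$ and its fibers over the non-smooth (but non-multiple) locus need not be proper, so one must be slightly careful that "extendable" is being checked on a genuine smooth projective compactification of $A$ (equivalently of $A^{sm}$), which is exactly what Definition~\ref{def-ext} and Lemma~\ref{lemma-extsympl} are set up to handle. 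Neither of these is serious; the heart of the argument is the correspondence construction plus Proposition~\ref{propextmhs}.
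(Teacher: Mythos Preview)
Your approach is essentially the paper's. The paper packages the argument into a symmetric auxiliary statement (Proposition~\ref{prop_gamma2}: an almost $A$-torsor carries an extendable Lagrangian symplectic form iff $A$ does) and applies it twice, but the proof of that proposition is exactly what you outline --- push forward the graph of the \'etale-local trivialization $X_V^{nc,*} \cong A_V$ to a global cycle $\Gamma$ on (compactifications of) $X \times_B A$, invoke Proposition~\ref{prop_cor_ext} for extendability, and check non-degeneracy via Proposition~\ref{prop_nondeg} using that translations act trivially on $\sigma^{1,1}$.

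One imprecision worth flagging: your reduction from $P$ to $A$ via ``birational invariance (Lemma~\ref{lemma-extsympl})'' is not literally correct, since a nontrivial $A$-torsor need not be birational to $A$ over $B^{nm}$; \'etale-locally isomorphic is strictly weaker, and there is no ``common open locus'' between $P$ and $A$. The fix is simply to run the same correspondence argument you already use for $X \rightsquigarrow A$ a second time for $A \rightsquigarrow P$ (push forward the graph of $P_V^* \cong A_V$ and apply Proposition~\ref{prop_cor_ext}). This is precisely why the paper formulates Proposition~\ref{prop_gamma2} as an if-and-only-if and then invokes it in both directions, rather than treating the passage $A \to P$ separately.
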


\begin{proof} The proposition is implied by Proposition \ref{prop_gamma2} below.
\end{proof}

\begin{prop}\label{prop_gamma2} Let $g: A \to B$ be a quasi-projective smooth commutative group scheme with connected fibers and proper general fiber. Let $X$ be a smooth quasi-projective variety with a smooth morphism $f: X \to B$ with the property that $X^{nc} \to B$ is an almost $A$-torsor. Then there exists an extendable holomorphic symplectic form on $X^{nc}$ with respect to which $f$ is Lagrangian if and only if there exists an extendable holomorphic symplectic form on $A$ with respect to which $g$ is Lagrangian.
\end{prop}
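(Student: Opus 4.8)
The statement relates "extendable symplectic form on $X^{nc}$ making $f$ Lagrangian" to "extendable symplectic form on $A$ making $g$ Lagrangian", and the natural tool is the local structure of almost torsors together with the characterization of extendability via mixed Hodge structures (Proposition \ref{propextmhs}). The key point is that, étale locally on $B$, $X^{nc}$ is a trivial $A$-torsor, hence (after an étale base change $V \to B$, along a multisection) isomorphic to an open piece $X^{nc,*}_V \cong A_V$ over $V$. So one should transfer a 2-form back and forth along these local isomorphisms, and then check that what one gets is (i) globally well-defined, (ii) symplectic where the original is, and (iii) extendable.

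**First direction ($X^{nc} \Rightarrow A$).** Suppose $\sigma_{X^{nc}}$ is an extendable symplectic form making $f$ Lagrangian. Fix an étale multisection $s\colon V \to X^{nc}$ with $V \to B$ étale. By Proposition \ref{prop arnold-liouville} and the construction in Theorem \ref{thm alb}, the section $s$ induces an isomorphism $\tau_V\colon A_V \xrightarrow{\ \sim\ } X^{nc,*}_V \subset X^{nc}_V$ over $V$. Pull back $\sigma_{X^{nc}}$ to $X^{nc}_V$ (extendability is preserved under étale pullback by Proposition \ref{propextmhs}, since pullback is a morphism of MHS), restrict to the open subset $X^{nc,*}_V$, and transport via $\tau_V$ to get a 2-form $\sigma_{A_V}$ on $A_V$; it is symplectic and makes $g$ Lagrangian because $\tau_V$ is an isomorphism over $V$. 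The content is descent: one must show the forms $\sigma_{A_V}$ for varying $V$ glue to a single form $\sigma_A$ on $A$. Two sections $s_1,s_2$ differ by translation by a section of $A_V$ (as in the proof of Proposition \ref{prop arnold-liouville}, $a_1 = a_2\circ t_{s_1-s_2}$), so the two induced forms on $A_V$ differ by the pullback along that translation. Here I would invoke the standard fact that on a commutative group scheme the symplectic form coming from a Lagrangian fibration is translation-invariant — this is exactly the Arnold–Liouville picture: the fiberwise-constant 1-forms $dx_i$ (dual to the commuting vector fields $X_i$ of Theorem \ref{thm alb}) are translation-invariant, and $\sigma$ pairs the base directions with these, so $\sigma$ is translation-invariant up to the pieces in $g^*\Omega^2_B$, which do not affect nondegeneracy (cf. Proposition \ref{prop_nondeg}). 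Thus $t_{s_1-s_2}^*\sigma_{A_V} = \sigma_{A_V}$, the local forms agree on overlaps (after the étale base change, then descend by the same GAGA/rationality argument used repeatedly above: a rational form that is analytically regular is algebraically regular), and $\sigma_A$ is a well-defined algebraic 2-form on $A$. It is generically symplectic, hence symplectic (nondegeneracy is an open condition and holds on the dense open where $X\to B$ is smooth, since there $A_U \cong \mathrm{Aut}^0(X_U/U)$ acts simply transitively on each torsor component). Extendability of $\sigma_A$ then follows because extendability can be checked on a dense open and is detected in $W_0\cap F^2$ of the relevant $H^2$ (Proposition \ref{propextmhs}): the étale-local isomorphisms identify a smooth projective compactification of a piece of $A$ with one of a piece of $X^{nc}$, on which $\sigma$ extends by hypothesis. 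Concretely one picks a smooth projective compactification $\overline{A_V}$ and notes $\sigma_{A_V} = \tau_V^*(\sigma_{X^{nc}}|_{X^{nc,*}_V})$ extends to $\overline{A_V}$ because $\sigma_{X^{nc}}$ does on a compactification of $X^{nc,*}_V$ birational to $\overline{A_V}$; then Lemma \ref{lemma-extsympl} or a direct MHS argument promotes this to extendability of $\sigma_A$ on a compactification of all of $A$.

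**Second direction ($A \Rightarrow X^{nc}$).** This is symmetric and a bit easier: given an extendable symplectic $\sigma_A$ on $A$ with $g$ Lagrangian, use the same local isomorphisms $\tau_V$ to produce, on each $X^{nc,*}_V$, a symplectic form; translation-invariance of $\sigma_A$ again guarantees independence of the chosen section, so these glue over the étale cover to a form $\sigma_{X^{nc}}$ on $X^{nc}$ (descent/GAGA as before, noting $A_U$ acts simply transitively so the forms on the various torsor components are canonically identified). It is symplectic and makes $f$ Lagrangian by construction, and extendable by the same MHS argument.

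**Main obstacle.** The essential subtlety — and the only genuinely non-formal point — is the translation-invariance of the Lagrangian symplectic form on the group scheme $A$ (equivalently, on each almost-torsor component): this is what makes the locally-transported forms independent of the auxiliary section and hence globally well-defined. Everything else (extendability preserved under étale pullback and under restriction to dense opens, via Proposition \ref{propextmhs}; descent of a rational algebraic form that extends analytically, via the GAGA argument already used in Proposition \ref{prop arnold-liouville}; nondegeneracy as an open condition that holds over the smooth locus) is routine. I would therefore state and prove the translation-invariance lemma first, as a clean standalone, and then run the two-way transfer argument above.
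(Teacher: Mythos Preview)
Your overall strategy (transport the form along the local isomorphisms $\tau_V\colon A_V\cong X^{nc,*}_V$ and descend) is natural, but the descent step has a real gap. You write that ``$\sigma$ is translation-invariant up to the pieces in $g^*\Omega^2_B$'' and then conclude ``Thus $t_{s_1-s_2}^*\sigma_{A_V}=\sigma_{A_V}$''. That inference is not valid: only the $\sigma^{1,1}$-component (in the sense of Proposition~\ref{prop_nondeg}) is translation-invariant, because translations act trivially on $f_*\Omega_{X/B}$; the full $2$-form is in general \emph{not}. Concretely, if over an analytic open $U$ one writes $\sigma=\sum a_{ij}(x)\,dx_i\wedge dy_j+\sum b_{ij}(x)\,dx_i\wedge dx_j$ (coefficients constant along fibers over the proper locus), then translation by a section $c(x)$ gives $t_c^*\sigma-\sigma=\sum_{i,j,k} a_{ij}\,(\partial_k c_j)\,dx_i\wedge dx_k$, which is a nonzero element of $g^*\Omega^2_U$ in general. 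On a compact hyper-K\"ahler this obstruction vanishes because $H^{2,0}$ is one-dimensional (so $t_c^*\sigma=\lambda\sigma$, and comparing $\sigma^{1,1}$ forces $\lambda=1$), but in the quasi-projective setting of the proposition $B$ may carry nontrivial $2$-forms and the argument breaks. Since the locally transported forms $\tau_V^*\sigma$ differ by such $g^*\omega$ over $V\times_B V$, they do not satisfy the cocycle condition and there is no form on $A$ to descend to. Your extendability step is also incomplete: even granting a well-defined $\sigma_A$, you would still need to pass from extendability of $\sigma_{A_V}$ on a compactification of $A_V$ to extendability of $\sigma_A$ on a compactification of $A$, which requires a trace/pushforward argument you have not supplied.

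The paper's proof sidesteps both issues by working with a \emph{correspondence} rather than by descent. One chooses smooth partial compactifications $\wt X,\wt A$ projective over $B$, takes the graph $\Gamma_V\subset \wt X_V\times_V\wt A_V$ of the birational map $\varphi_V$ induced by an \'etale multisection, and sets $\Gamma$ to be (a multiple of) its pushforward to $\wt X\times_B\wt A$. Then $\sigma_{\wt A}:=\Gamma_*\sigma_{\wt X}$ is, by Proposition~\ref{prop_cor_ext}, automatically a \emph{global} extendable holomorphic $2$-form on $\wt A$: no descent is needed, and extendability is free. Over an analytic open where the multisection splits into sections $s_0,\dots,s_N$, this form is the average $\frac{1}{N}\sum \phi_i^*\sigma_{X_U}$; since all the $\phi_i^*\sigma$ have the \emph{same} $\sigma^{1,1}$-component (this is the translation-invariance that actually holds), the average has that same $\sigma^{1,1}$, and Proposition~\ref{prop_nondeg} gives nondegeneracy. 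In other words, the paper uses translation-invariance exactly where it is true (on $\sigma^{1,1}$, for nondegeneracy) and replaces your descent by an averaging/correspondence that produces a global extendable form directly. If you want to salvage your approach, the fix is precisely this: instead of trying to descend a single $\tau_V^*\sigma$, take the trace along $A_V\to A$; the resulting form is global and extendable, and its $\sigma^{1,1}$ agrees with that of any $\tau_V^*\sigma$.
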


Thanks to C. Voisin for suggesting this proof.

\begin{proof}
Let $\wt X$ (respectively $\wt A$) be a smooth partial compactification of $X$ (respectively of $A$) with a projective morphism to $B$ extending $f$ (resp. $g$). Let $\sigma_{X}$ be an extendable holomorphic symplectic form on $X$  and let $\sigma_{\wt X}$ be its extension to $\wt X$.
Let $\nu: V \to B$ be an \'etale morphism associated with an \'etale multisection $s$ of $X \to B$.  Up to restricting $V$, we can assume that $\nu: V \to \nu(V)$ is proper. The action of $A$ on $X$ determines an isomorphism  $\varphi_V: X_V^* \cong A_V$, where as in the proof of Theorem \ref{thm alb}, $X_V^* \subset X_V$ denotes the open subset parametrizing fibers of $X_V \to V$ that meet the tautological section.  
Let $ \wt X_V $ (resp.  $\wt A_V$) be the base change of $\wt X$ (resp. $\wt A$) to $V$, and let $\Gamma_V \subset \wt X_V \times_ V \wt A_V$ be the graph of the birational map $\varphi_V:  \wt X_V \dashrightarrow  \wt A_V$. Let $\Gamma  \subset \wt X \times_B \wt A$ be the closure of $\frac{1}{\deg(\nu)^2} \nu_* \Gamma_V $.
By Proposition \ref{prop_cor_ext}, the holomorphic $2$-form $\sigma_{\wt A}:=\Gamma_* (\sigma_{\wt X})$ is an extendable. 

We now check that the restriction $\sigma_A$ of  $\sigma_{\wt A}$ to $A$ is non-degenerate. This can be done analytically locally in the following way. Since $f$ is Lagrangian with respect to $\sigma_X$, the fibers of $g$ are isotropic with respect to $\sigma_A$. It follows that $\sigma_A$ induces a section $\sigma_A^{1,1} \in H^0(g^*\Omega_B \otimes \Omega_{A/B})$, and $\sigma_A$ is non-degenerate if and only if  $\sigma_A^{1,1}$ induces an isomorphism $g^*T_B \to \Omega_{A/B}$ (cf. Proposition \ref{prop_nondeg}).

Let $U \subset B$ be an analytic open subset over which $f$ is proper and $s$ breaks into a disjoint union of sections $s_0, \dots, s_N: U \to X_U$. Each section determines an isomorphism $\phi_i: A_U \to X_U^i$. By construction, $\sigma_{A_U}=\frac{1}{N} \sum \phi_i^* \sigma_{X_U}=\frac{1}{N}  \phi_0^*\sum t_i^* \sigma_{X_U^i} $, where $t_i=\phi_i \circ \phi_0^{-1}: X_U \to X_U$ is the translation by $s_0-s_i$.  Similarly for $\sigma_{A_U}^{1,1}$.
Under the isomorphism $t_i^*\Omega_{X_U/U} \cong \Omega_{X_U/U}$, $t_i^* \sigma_{X_U}^{1,1}=\sigma_{X_U}^{1,1}$, because translations act trivially on the holomorphic $1$-forms of the general fiber. Hence
\[
\sigma_{A_U}^{1,1}=\phi_0^* \sigma_{X_U^0}^{1,1}=\phi_i^* \sigma_{X_U^0}^{1,1}
\]
does not depend on the section. This shows that $\sigma_A$ is non-degenerate over the locus $W$ of $\nu(V)$ where the fibers are proper.
 We cover $B$ by analytic open subsets $U_\alpha$ over which there exists an analytic sections $s_\alpha: U_\alpha \to X_{U_\alpha}$. Each determines an isomorphism $\phi_\alpha: A_{U_\alpha}  \to X_{U_\alpha}^*$. Here, as usual $ X_{U_\alpha}^* \subset  X_{U_\alpha}$ is the open subset parametrizing fibers meeting the section $s_\alpha$. By pullback, we get holomorphic symplectic forms $\sigma_\alpha=\phi_\alpha^* \sigma_A$ on $A_{U_\alpha}$.  The argument above shows that on $ {A}_{W \cap U_\alpha}$, $\sigma_{A}^{1,1}$ is equal to $\sigma_{A_{U_\alpha}}^{1,1}$ as sections of $g^*\Omega_B \otimes \Omega_{A/B}$, hence $\sigma_{A}^{1,1}$ equals to  $\sigma_{\alpha}^{1,1}$ on all of $U_\alpha$ and hence $\sigma_A$ is non-degenerate on $A_{U_\alpha}$.

The argument to show that if $A$ has an extendable holomorphic symplectic form then so does $X$ is completely analogous.
\end{proof}


\section{Applications II: Torsors and isogenous fibrations} \label{section_torsors}

The goal of this section is to prove the following theorem

\begin{thm} \label{thm_torsors}
Let $X \to B$ be a Lagrangian fibration.  Let $A \to B^{nm}$ be the relative Albanese fibration and suppose that $B^{nm}$ contains all codimension one points. For any $A' \to B^{nm}$ smooth commutative group scheme with connected fibers that is isogenous to $A \to B^{nm}$ and any quasi-projective almost $A'$-torsor $P \to B^{nm}$ there exists a $\Q$-factorial terminal symplectic compactification $Y$ of $P$.
\end{thm}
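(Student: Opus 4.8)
The plan is to exhibit on $P$ a holomorphic symplectic form that is \emph{extendable} in the sense of Definition~\ref{def-ext}, with respect to which $P\to B^{nm}$ is a Lagrangian fibration over an open subset, and then to quote Theorem~\ref{thm1'} with $U=B^{nm}$. Two preliminary points are immediate. First, $B^{nm}$ is smooth and, by hypothesis, contains all codimension one points of $B$, so condition (a) of Theorem~\ref{thm1'} holds. Second, $P$ is smooth: by Definition~\ref{defin-almost}, after an étale base change the open subscheme of $P$ meeting a chosen section is a trivial $A'$-torsor, and such opens cover $P$; since $A'\to B^{nm}$ is a smooth morphism it follows that $P\to B^{nm}$ is smooth and hence $P=P^{nc}$. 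Consequently $P$ is automatically $\Q$-factorial with terminal singularities, and once the extendable non-degenerate $2$-form is in place, Remark~\ref{remextsympl}(1) shows that $P$ is a symplectic variety; thus $P$ will be a $\Q$-factorial terminal symplectic variety and Theorem~\ref{thm1'} will produce the compactification $Y$, projective over $B$ and carrying a Lagrangian fibration extending $P\to B^{nm}$.

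Everything therefore reduces to building the form on $P$, which I would do by propagating extendability along the chain $X^{nc}\rightsquigarrow A\rightsquigarrow A'\rightsquigarrow P$. First, by Theorem~\ref{thm alb}, $X^{nc}\to B^{nm}$ is an almost $A$-torsor, and the symplectic form $\sigma_X$ restricts to an extendable holomorphic symplectic form on the open set $X^{nc}$ with respect to which $X^{nc}\to B^{nm}$ is Lagrangian. (Extendability of $\sigma_X$ is the standing hypothesis under which Proposition~\ref{prop_gamma} is stated; it holds automatically whenever $B$, hence $X$, is projective, by Remark~\ref{remextsympl}(2).) Proposition~\ref{prop_gamma2} then yields an extendable holomorphic symplectic form $\sigma_A$ on $A$ with respect to which $g\colon A\to B^{nm}$ is Lagrangian.

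Next I transfer $\sigma_A$ across the isogeny. Replacing $A'$ by an isogenous group scheme if necessary, we may assume there is an isogeny $\phi\colon A'\to A$ over $B^{nm}$; in characteristic zero its kernel is a finite étale group scheme, so $\phi$ is finite étale. Then $\phi^{*}\sigma_A$ is a holomorphic $2$-form on $A'$, non-degenerate because $\phi$ is étale, with respect to which $g'\colon A'\to B^{nm}$ is Lagrangian (the fibers of $g'$ map to fibers of $g$). It is extendable because $\phi^{*}\colon H^{2}(A,\C)\to H^{2}(A',\C)$ is a morphism of mixed Hodge structures, hence preserves $W_{0}\cap F^{2}$, and one concludes by the criterion of Proposition~\ref{propextmhs}; if instead the isogeny runs $A\to A'$, one descends $\sigma_A$ along the corresponding finite étale quotient, using that $\sigma_A$ is invariant under fiberwise translation and that pullback is injective and strict on cohomology. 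Since $A'$ is quasi-projective with proper general fiber (both inherited through the isogeny), Proposition~\ref{prop_gamma2} applies once more — now with $A'$ in the role of $A$ and $P$ in the role of $X$, using $P=P^{nc}$ and that $P\to B^{nm}$ is a smooth morphism and an almost $A'$-torsor — giving an extendable holomorphic symplectic form $\sigma_P$ on $P$ for which $P\to B^{nm}$ is Lagrangian over the locus where $A'$ has abelian-variety fibers. Feeding $(P,\sigma_P)$ into Theorem~\ref{thm1'} finishes the argument.

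The only place where a genuine (if short) argument is needed beyond assembling earlier results is the isogeny step: one must check that non-degeneracy and, more delicately, extendability of the symplectic form survive pullback (or descent) along the isogeny, which is precisely where the Hodge-theoretic characterisation of Proposition~\ref{propextmhs} does the work. Everything else is a direct application of Theorem~\ref{thm alb}, Proposition~\ref{prop_gamma2}, Remark~\ref{remextsympl}, and Theorem~\ref{thm1'}.
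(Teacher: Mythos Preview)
Your proof is correct and follows essentially the same route as the paper: propagate an extendable symplectic form along the chain $X^{nc}\rightsquigarrow A\rightsquigarrow A'\rightsquigarrow P$ using Proposition~\ref{prop_gamma2}, then feed $P$ into Theorem~\ref{thm1'}. The one place you diverge is the isogeny step: the paper invokes Proposition~\ref{prop-isogeny} to produce an isogeny $\psi\colon A'\to A$ regardless of which direction is given, so only the easy pullback $\psi^{*}\sigma_{A}$ is ever needed; your phrase ``replacing $A'$ by an isogenous group scheme if necessary'' is a slip (you cannot replace the given $A'$), and while your subsequent case split repairs it, the descent sketch for the direction $A\to A'$ is both unnecessary and the least justified part of your write-up---quoting Proposition~\ref{prop-isogeny} removes the need for it entirely.
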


\begin{rem}
In the case of $A$-torsors, a completely different point of view is developed in \cite{Abasheva-Rogov} where Shafarevich-Tate twists of Lagrangian fibrations are systematically studied. Here, the twists are constructed quite easily as complex manifolds and the challenge is to show when they are K\"ahler or projective manifolds. We refer the reader to \cite{Abasheva-Rogov} and \cite{Abasheva} for more details.
\end{rem}

Recall the definition of isogeny of commutative group schemes over a base \cite[Definition 4 \S 7.3]{BLR}

\begin{defin}
A homomorphism $f: G  \to  H $ of commutative group schemes of finite type over a base $S$ is called an isogeny if for each $s \in S$ the homomorphism $f_s: G_s  \to  H_s $ is an isogeny, i.e., $f_s$ is finite and surjective on the identity components. 
\end{defin}

\begin{cor}\label{cor-ratsect}
Let $X \to B$ be a Lagrangian fibration with reduced fibers in codimension one.  Then there exists a $\Q$-factorial terminal symplectic  projective compactification $\bar A $ of $A$. In particular, the Lagrangian fibration $ \bar A \to B$ has a rational section. If $B=B^{nm}$, then the section is regular.
\end{cor}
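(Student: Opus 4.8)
The plan is to deduce Corollary \ref{cor-ratsect} directly from Theorem \ref{thm_torsors}, applied to the relative Albanese itself, viewed as the trivial torsor over itself. The first step is the elementary translation of hypotheses: by definition $B^{nm}$ is the set of $b \in B$ such that $X_b$ has a reduced component (equivalently, over which $X \to B$ admits local sections), so the assumption that $X \to B$ has reduced fibers in codimension one says exactly that $B^{nm}$ contains every codimension one point of $B$, which is the codimension-one hypothesis of Theorem \ref{thm_torsors}. The second step is to check that $a: A \to B^{nm}$ is a quasi-projective almost $A$-torsor in the sense of Definition \ref{defin-almost}: it carries the zero section $e: B^{nm} \to A$, and since each fiber of $a$ is connected, for any \'etale multisection of $A \to B^{nm}$ the associated open subscheme of the base change is the whole base change, which is a trivial torsor under the corresponding group scheme via translation by the section. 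Quasi-projectivity of $A$ over $B^{nm}$ is part of the construction in Theorem \ref{thm alb}, where $A$ is realized as a locally closed subscheme of $\Aut(X/B)$ carrying a relatively ample line bundle obtained by descent.

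With these verifications in hand, Theorem \ref{thm_torsors} applied with $A' = A$ (the identity map being an isogeny) and $P = A$ produces a $\Q$-factorial terminal symplectic projective compactification $\bar A$ of $A$, together with a projective Lagrangian fibration $\bar\pi: \bar A \to B$ extending $a$ and restricting to it on the open dense subset $A \subset \bar A$. For the statements about sections, compose the zero section with the open immersion $A \hookrightarrow \bar A$: then $e: B^{nm} \to \bar A$ is a section of $\bar\pi$ defined over the open dense subset $B^{nm} \subseteq B$, hence a rational section of the Lagrangian fibration $\bar\pi$. If $B = B^{nm}$, the morphism $e$ is defined on all of $B$, so $e: B \to \bar A$ is an everywhere-defined section of $\bar\pi$.

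Since all the substance is packaged into Theorem \ref{thm_torsors}, there is no real obstacle; the only points requiring care are the two verifications above. Of these, the genuinely non-formal one is the quasi-projectivity of $A$ over $B^{nm}$, inherited from the proof of Theorem \ref{thm alb}; the identification of ``reduced fibers in codimension one'' with ``$B^{nm}$ contains all codimension one points'' and the fact that $A$ is tautologically an almost $A$-torsor are immediate from the definitions recalled in Section \ref{section-Alb}.
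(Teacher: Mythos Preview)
Your proposal is correct and follows essentially the same approach as the paper: apply Theorem \ref{thm_torsors} with $A'=A$ and $P=A$, then use the zero section of the group scheme to produce the (rational) section of the compactified fibration. You spell out the verifications (that the hypothesis on fibers means $B^{nm}$ contains codimension-one points, that $A$ is an almost $A$-torsor, and quasi-projectivity of $A$) that the paper's two-line proof leaves implicit.
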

\begin{proof}
Let $A \to B^{nm}$ be the relative Albanese fibration. By assumption,  $B^{nm}$ contains all codimension one points. Applying the Theorem to $A \to B^{nm}$ yields  $\bar A \to B$. The zero section of the group scheme $A \to B^{nm}$ extends to a rational section.
\end{proof}

As we will see in Theorem \ref{thm_smoothtorsors}, if $X \to B$ has integral fibers, then $\bar A$ is actually smooth and the section is regular.

\begin{cor} \label{cor-pic}
Let $X \to B$ be a Lagrangian fibration and let $V \subset B^{nm}$ be an open subset over which the relative Picard functor $\Pic_{X_V/V}$ is representable by a separated scheme $\Pic(X_V \slash V)$, locally of finite presentation (e.g. if the fibers of $f$ over $V$ are reduced and irreducible). Let $P \subset \Pic(X_V \slash V)$ be a connected component dominating $V$. If the codimension of the complement of $V$ in $B$ has codimension $\ge 2$, then there exist $\bar P  \to B$, a $\Q$-factorial terminal symplectic compactification of $P$.
\end{cor}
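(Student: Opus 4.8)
The plan is to reduce Corollary \ref{cor-pic} to Theorem \ref{thm_torsors} by exhibiting the chosen component $P \subseteq \Pic(X_V/V)$ (after possibly shrinking $V$ within its codimension-$\ge 2$ complement) as a quasi-projective almost torsor over a smooth commutative group scheme with connected fibers that is isogenous to the relative Albanese $A \to B^{nm}$. First I would replace $V$ by $V \cap B^{nm}$ minus the image of the singular locus of $X$ if $X$ is only $\Q$-factorial terminal (using Remark \ref{rem-sing-alb} and \cite[Lemma 3.2]{Matsushita-base} to see the complement stays of codimension $\ge 2$), so that we may work over the locus where $X \to B$ has reduced fibers admitting \'etale local sections and $A$ is defined. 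Over the open locus $U \subset V$ parametrizing smooth fibers, $\Pic^0_{X_U/U}$ is the dual abelian scheme of $\Jac(X_U/U)$, which is canonically isogenous (via a polarization coming from the symplectic form / relative ample class) to $A_U = \Aut^0(X_U/U) = \Jac(X_U/U)$; more simply, for an abelian variety there is always an isogeny to its dual, so $\Pic^0_{X_V/V}$ is isogenous to $A$ over $U$, hence its closure $A' \to V$ inside $\Pic(X_V/V)$ (the identity component, which is separated, smooth, with connected fibers over the reduced-fiber locus since $\Pic^0$ of a reduced connected curve-like fiber is connected) is a smooth commutative group scheme with connected fibers isogenous to $A$.

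Next I would check that the fixed component $P$ is a quasi-projective almost $A'$-torsor over $V$ in the sense of Definition \ref{defin-almost}. Quasi-projectivity of $P$ follows because $\Pic(X_V/V)$ is locally of finite presentation and separated, and a connected component dominating $V$ is a quasi-projective scheme over the quasi-projective $V$ (using a relative ample class to get a projective-over-$V$ closure if desired, then restricting). For the almost-torsor property: given an \'etale multisection $s: W \to X_V$ meeting the reduced components, the induced \'etale base change trivializes things fiberwise. Over the smooth-fiber locus $P_U$ is a torsor under $\Pic^0_{X_U/U}$; the point is that over all of $V$ (with reduced fibers), the open subscheme $P^*_W \subset P_W$ of line bundles whose restriction to the $W$-marked component has the appropriate degree is a torsor under $A'_W = (\Pic^0_{X_W/W})$. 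The key technical input is that for a connected reduced fiber $X_b$ with a chosen reduced component, translation by $\mathrm{Pic}^0(X_b)$ acts simply transitively on the component of $\Pic(X_b)$ with fixed multidegree, and that these fit together into a trivial $A'_W$-torsor after the \'etale base change; this uses the autoduality / section-trivialization arguments exactly as in the proof of Theorem \ref{thm alb} and Proposition \ref{prop arnold-liouville}.

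With $P \to V$ identified as a quasi-projective almost $A'$-torsor over $V \subseteq B^{nm}$ (with $V$ containing all codimension-one points since its complement in $B$ has codimension $\ge 2$) and $A'$ isogenous to $A$, Theorem \ref{thm_torsors} applies verbatim and produces a $\Q$-factorial terminal symplectic compactification $\bar P \to B$ of $P$. I should note that Theorem \ref{thm_torsors} is stated over $B^{nm}$ assumed to contain all codimension-one points; here the relevant base is $V$, and I would phrase the application as: the hypotheses of Theorem \ref{thm_torsors} are satisfied with $B^{nm}$ replaced by $V$ and $B$ unchanged, since the proof of Theorem \ref{thm_torsors} only uses that the base of the torsor contains all codimension-one points of $B$ and that the torsor carries an extendable holomorphic symplectic form via Proposition \ref{prop_gamma}, both of which hold here.

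The main obstacle I expect is the almost-torsor verification over the non-smooth (but reduced) fibers: one must be careful that the connected component $P$ one fixed over $U$ extends to a well-defined component over all of $V$, that $A' = \overline{\Pic^0_{X_V/V}}$ really is a \emph{smooth} group scheme with \emph{connected} fibers there (not merely over $U$), and that the multidegree-fixing condition defining $P^*_W$ is the right one making it a trivial torsor \'etale-locally. For reduced \emph{irreducible} fibers this is classical (the $\Pic^0$ of an integral curve-like scheme is smooth connected, and $\Pic$ is a disjoint union of torsors indexed by degree), which is why the parenthetical hypothesis ``reduced and irreducible fibers over $V$'' is singled out; handling merely reduced (possibly reducible) fibers requires the multidegree bookkeeping and is where I would invoke, or reprove, the relevant facts about relative Picard schemes of families with reduced fibers together with the \'etale-local trivialization from Theorem \ref{thm alb}.
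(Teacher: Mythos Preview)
Your overall strategy---show that $\Pic^0(X_V/V)$ is a smooth commutative group scheme with connected fibers isogenous to $A$, recognize $P$ as a torsor over it, and invoke Theorem \ref{thm_torsors}---matches the paper's. However, there is a genuine gap in your execution, and one unnecessary complication.

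\textbf{The gap: extending the isogeny beyond the smooth-fiber locus.} You write that $\Pic^0_{X_U/U}$ is isogenous to $A_U$ because an abelian variety is always isogenous to its dual, and then assert that ``its closure $A' \to V$ inside $\Pic(X_V/V)$ \ldots\ is isogenous to $A$.'' An isogeny over the dense open $U$ does not automatically extend to an isogeny over $V$; you have supplied neither a map $A_V \to A'$ nor a verification that it is \'etale over the non-smooth fibers. This is precisely the heart of the paper's argument. The paper constructs an explicit homomorphism $g_L: A \to \Pic(X_{B^{nm}}/B^{nm})$ over all of $B^{nm}$, arising from the action of $A$ on $X$ together with a choice of relatively ample line bundle $L$ (the map $a \mapsto t_a^* L \otimes L^{-1}$). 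It then uses the Lagrangian structure in an essential way: both $\mathrm{Lie}(A/B^{nm})$ and $\mathrm{Lie}(\Pic) \cong R^1 f_* \mathcal O_X$ are canonically identified with $\Omega^1_{B^{nm}}$ (the first by the symplectic form, the second by Matsushita's theorem), so $dg_L$ is an endomorphism of a single vector bundle, and one argues it is an isomorphism from the fact that it is so over $U$. Your polarization isogeny over $U$ is essentially $g_L$ restricted to $U$, but you have skipped the mechanism that makes it extend.

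\textbf{The unnecessary complication.} Your verification that $P$ is an almost $A'$-torsor via multidegree bookkeeping and \'etale trivialization is not needed. Since $\Pic(X_V/V)$ is a separated group scheme and $P$ is a connected component dominating $V$, translation by the identity component $\Pic^0(X_V/V)$ makes $P$ an honest $\Pic^0$-torsor immediately; this is what the paper uses. Similarly, rather than defining $A'$ as a ``closure from $U$'' and then worrying about smoothness and connected fibers, the paper obtains $\Pic^0(X_V/V)$ directly as the identity component of the smooth separated group scheme $\Pic(X_V/V)$ (smoothness coming from the identification $\mathrm{Lie}(\Pic) \cong R^1 f_* \mathcal O_X$, which is locally free of the correct rank), invoking \cite[15.6.5]{EGAIV3}.
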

\begin{proof} First note that since $X_{B^{nm}} \to B^{nm}$ is a Lagrangian fibration over a smooth base, by \cite{Matsushita-higher} the higher direct images $R^i f_* \mc O_{X_{B^{nm}}} $ are locally free, commute with base change, and are isomorphic to $\Omega^i_{B^{nm}}$. Hence, $f$ is cohomologically flat in dimension $0$ and by \cite[\S 8.4/1]{BLR}, $ Lie( \Pic(X_{B^{nm}})) \cong R^1 f_* \mc O_{X_{B^{nm}}} $. As a consequence, the algebraic space $\Pic(X_{B^{nm}}  \slash B^{nm})$ is smooth of relative dimension $\dim X/2$.
Since $A \to B^{nm}$ acts on $X_{B^{nm}}$, the choice of a line bundle $L$ on $X_{B^{nm}} \to B^{nm}$ determines a functorial morphism $g_L: A \to  \Pic(X_{B^{nm}} \slash B^{nm})$  over $B^{nm}$ (cf. discussion in 6.3 and 8.1/4  of \cite{BLR}). The differential of $g_L$ along the zero section induces a morphism of vector bundles
\[
dg_L: Lie(A/{B^{nm}}) \to Lie( \Pic(X_{B^{nm}}))\cong R^1 f_* \mc O_{X_{B^{nm}}}
\] 
Now since $A \to {B^{nm}}$ is a Lagrangian fibration, $Lie(A/{B^{nm}}) \cong \Omega^1_{B^{nm}}$ (cf. \cite[\S 2.3]{dCRS}), and hence $Lie(A/{B^{nm}})\cong R^1 f_* \mc O_{X_{B^{nm}}}$. Thus $dg_L$ is an endomorphism of the vector bundle $R^1 f_* \mc O_{X_{B^{nm}}}$.
Now since $\Pic(X_V \slash V)$ is smooth and separated over $V$, the connected component of the identity $\Pic^0(X_V \slash V)$ is a smooth group scheme of finite type over $V$ with connected fibers \cite[15.6.5]{EGAIV3}. Moreover, $g_L(A_V) \subset  \Pic^0(X_V \slash V)$ because $A_V \to V$ is smooth with connected fibers.
By Theorem \ref{thm_torsors} and Proposition \ref{prop-isogeny} below, it is enough to show that  $ g_L: A_V \to \Pic^0(X_V \slash V)$ is an isogeny, since any other component $P$ is a torsor over $\Pic^0(X_V \slash V)$. We first show that $g_L$ is a homomorphism of groups over $V$: indeed, by the theorem of the square this is true over the locus where $X_V \to V$ is smooth; since by assumption $\Pic^0(X_V \slash V)$ is separated, it follows that $g_L$ is a group homomorphism over $V$. By \cite[\S 7.3/1]{BLR}, it is enough to show that the differential $dg_L$ is an isomorphism at every point of $V$. But this follows from the fact that  (a) by codimension reasons $g_L$ extends to an automorphism of the vector bundle $R^1 f_* \mc O_{X}$; (b)$d g_L$ is generically an isomorphism since it is so on the locus where $X_V \to V$ is smooth. Hence, $g_L$ is an isomorphism. 
 \end{proof}

\begin{rem} \label{remPicKim}
 In \cite{Kim-dual}, Kim obtains a compactification of the relative $\Pic^0$ of a Lagrangian fibration $X \to B$, where $X$ is one of the known \hk manifolds (i.e. $X$ is of K3$^{[n]}$, generalized Kummer, OG$6$, or $OG10$-type). This compactification is obtained as a quotient of $X$ by a finite group acting symplectically on $X$. The difference here is that in Theorem \ref{thm_torsors}, and hence a priori also in the isogeny of Corollary \ref{cor-pic}, we allow kernels of isogenies that are quasi-finite over the base, but not constant.
\end{rem}

Before proving the theorem we recall the following examples:

\begin{example} \label{ex-og10}  (see \cite{Rapagnetta10, dCRS})
Let $(S,H)$ be a general degree two K3 surface and consider the two Mukai vectors $v:=(0,2H,k)$ and $w:=(0,2H,j)$, where $k$ is an even integer and $j$ is an odd integer. The moduli space $M_{v,H}$ of $H$-semistable sheaves with Mukai vector $v$ is a singular symplectic variety and it admits a symplectic resolution $ \wt M_{v,H} \to M_{v,H}$ which is a  projective \hk manifold of O'Grady 10 type. The moduli space $M_{w,H}$ of $H$-semistable sheaves with Mukai vector $w$ is projective \hk manifold of K3$^{[5]}$-type. The support morphism induces both \hk manifolds with a Lagrangian fibration to $\P^5=|2H|$. The  relative Picard variety $P:=\Pic^0_H(\mc C/|2H|)$ of $H$-stable line bundles of degree $0$ is a smooth commutative group scheme with connected fibers, with an action on $ \wt M_{v,H}$ and $ M_{w,H} $ with affine stabilizers and which turns the smooth locus of the Lagrangian fibrations into almost $P$-torsors (see \cite{dCRS}). Note that by the universal property of the Albanese fibration (Proposition \ref{prop_albuniversal}), $P_{B^{nm}}$ is isomorphic to the Albanese fibration of $N$ and $M$.
\end{example}

The irreducible component of the fibers of $ \wt M_{v,H} \to |2H|$ and $  M_{w,H} \to |2H|$ are computed in \cite[Prop. 4.4.3]{dCRS}. Here it is shown that the two fibrations have fibers with a different number of components over the $2$-dimensional locus in $|2H|$ parametrizing non-reduced curves. 
This example, namely, the example of Lagrangian fibrations over the same base  that are locally isomorphic over an open subset, but have fibers over the same points with a different number of components, indicates that the relation between Shafarevich-Tate twists and compactification of different torsors over the Albanese fibration should be investigated further.

\begin{example} \label{ex-og6} Building on \cite{Rapagnetta-topological}, in \cite{MRS} it is shown that the  Lagrangian fibered OG$6$-type manifold associated with a general principally polarized abelian surface is the target of a relative degree $2$ rational isogeny from a genus $3$ Beauville-Mukai system associated with the corresponding Kummer K3. In \cite{Floccari-Kum6}, Floccari showed that a certain Debarre system on a general principally polarized abelian surface, which is a Lagrangian fibered \hk $6$-fold of generalized Kummer type,  admits a relative degree $2^5$ rational isogeny onto a Beauville-Mukai system associated with  the corresponding Kummer K3.
\end{example}

The natural question is whether there are other examples to which the theorem above applies. The two isogenies in Example \ref{ex-og6} are induced by a global action by a finite group acting by birational maps. It would be interesting to have  examples of isogenies not induced by a finite group acting globally, i.e. examples of isogenies whose kernels are not constant group schemes (cf. Remark \ref{remPicKim}).
Before proving the Theorem we record the following proposition.

\begin{prop} \label{prop-isogeny}
Let $G, H \to S$ be  smooth commutative group schemes of finite type with connected fibers over an irreducible base $S$ defined over a field of characteristic $0$. Suppose that there exists an isogeny $\phi: G \to H$ over $B$. Then $\phi$ is \'etale and there exists an $N \in \mathbb N$ and an isogeny $\psi: H \to G$ over $B$ such that $\psi \circ \phi: G \to H$ is multiplication by $N$.
\end{prop}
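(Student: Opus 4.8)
The plan is to reduce everything to the behaviour on Lie algebras and on the (finite) kernel. First I would observe that since $\phi$ is an isogeny, its kernel $K = \ker \phi$ is a group scheme over $S$ whose fibers $K_s$ are finite; because $S$ has characteristic $0$, each $K_s$ is reduced (Cartier), so $K \to S$ is quasi-finite and unramified, and since $\phi$ is flat (both $G,H$ are smooth of the same relative dimension over $S$ and $\phi$ is surjective on fibers with finite kernel, so the fiber dimension is constant, and flatness follows from the miracle flatness / ``fibre dimension'' criterion as $S$ is reduced), $\phi$ is flat and quasi-finite with reduced fibers, hence \'etale. Equivalently, the induced map on Lie algebras $\mathrm{Lie}(\phi)\colon \mathrm{Lie}(G/S) \to \mathrm{Lie}(H/S)$ is an isomorphism of vector bundles, which is really the cleanest way to see \'etaleness.

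For the second assertion I would first bound the order of the kernel uniformly. The function $s \mapsto |K_s|$ is upper semicontinuous and, since $K \to S$ is finite \'etale after a stratification — or simply because the degree of a quasi-finite flat morphism is locally constant — it is bounded above on the irreducible $S$ by its value at the generic point; call this bound $N$ (one can take $N = |K_\eta|$ if $K$ is finite flat, or the sup over the finitely many strata in general). Then for each $s$ the finite group scheme $K_s \subset G_s$ is killed by multiplication by $N$, i.e. $[N]_{G_s}$ factors through $\phi_s$. I would then argue that this factorization is global: the composite $[N]_G\colon G \to G$ is constant on the fibers of $\phi$ (since it kills $K$ fiberwise and $G \to H$ is an fppf quotient by $K$ — here one uses that $\phi$, being \'etale and surjective, realizes $H$ as the quotient sheaf $G/K$), so by the universal property of the quotient there is a unique $S$-homomorphism $\psi\colon H \to G$ with $\psi \circ \phi = [N]_G$. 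Finally $\psi$ is itself an isogeny: on each fiber $\psi_s \circ \phi_s = [N]_{G_s}$ is an isogeny, and $\phi_s$ is an isogeny, so $\psi_s$ is finite and surjective onto the identity component; thus $\psi$ is an isogeny over $S$ as claimed. (That $\psi\circ\phi = [N]$ forces $\phi\circ\psi = [N]_H$ as well, since $\phi$ is an epimorphism and $\phi\circ\psi\circ\phi = \phi\circ [N]_G = [N]_H \circ \phi$.)

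The main obstacle I expect is the passage from the \emph{fiberwise} factorization of $[N]$ through $\phi$ to a \emph{global} morphism $\psi\colon H \to G$ over all of $S$, including over non-reduced or ``bad'' points of $S$ where the kernel is only quasi-finite and not finite flat. Two honest ways to handle this: either invoke that an \'etale surjective homomorphism of group schemes is automatically an fppf (in fact \'etale) quotient map, so $H = G/K$ as fppf sheaves and the universal property applies verbatim; or, if one prefers to stay with schemes, note that $[N]_G$ as a morphism of schemes is constant on geometric fibers of $\phi$ and invariant under the $K$-action, and $\phi$ being faithfully flat and quasi-compact, descent along $\phi$ produces the desired $\psi$ over $S$. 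Either route is standard once one has pinned down that $\phi$ is \'etale, so the genuinely new content of the proposition is the uniform bound $N$ on $|K_s|$, which is exactly where irreducibility of $S$ (and characteristic $0$, to keep the kernels reduced) is used.
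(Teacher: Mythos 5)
Your proof is correct and follows essentially the same route as the paper's: fiberwise \'etaleness in characteristic $0$ plus the fibrewise flatness criterion gives that $\phi$ is \'etale, a uniform bound $N$ on the quasi-finite kernel lets $[N]_G$ factor through the fppf quotient $H=G/K$, and one checks $\psi$ is an isogeny. The only substantive difference is how you get the uniform bound: the paper factors $\phi$ via Zariski's main theorem as an open immersion followed by a finite morphism, whereas your parenthetical claim that ``the degree of a quasi-finite flat morphism is locally constant'' is false as stated (fiber counts can drop, e.g.\ for N\'eron-model kernels) --- but your fallback via stratification, or the ZMT bound, does the job, and one should take $N$ to be a common multiple (e.g.\ the factorial) of the fiber orders rather than their supremum.
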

\begin{proof}
For every $s \in S$, $f_s: G_s \to H_s$ is \'etale by \cite[7.3/1]{BLR}.  By \cite[2.4/2]{BLR}, this implies that $f$ is flat, and by \cite[2.4/7]{BLR} that $f$ is \'etale. The closed subgroup $\ker (f) \subset G$ is quasi-finite over $B$. Factorizing $f$ as an open immersion followed by a finite morphism, it follows that there exists an $N$ such that $\ker (f) \subset \ker [N]$, where $[N]: G \to G$ is multiplication by $N$. It follows that there exists a $\psi: H \to G$ such that  $[N]=\psi \circ \phi$. It can be easily checked that $\psi$ is an isogeny (in particular it is \'etale). 
\end{proof}


\begin{rem}
The proposition shows that in the theorem above, it does not matter if we assume the existence of an isogeny from $A$ to $A'$ or in the opposite direction.
\end{rem}

\begin{rem} If $G$ (or equivalently $H$)  has semistable reduction over all codimension one points of $B$, then using \cite[7.3/6]{BLR}, it can be seen that up to restricting $B^{nm}$ to an open subset containing all codimension one points, we only need to assume the existence of an isogeny $\phi_K: A_K \to A_K'$ (or $\psi_K:A'_K \to A_K$  between the generic fibers.
\end{rem}

\begin{proof} By Proposition \ref{prop-isogeny}, there exist isogenies  $\phi: A \to A'$ and $\psi: A' \to A$ over $B^{nm}$.
By Proposition \ref{prop_gamma} (3) there exists an extendable holomorphic symplectic form on $A$. 
The form  $\psi^*(\sigma_{A})$ is an extendable holomorphic symplectic form on $A'$, which is non-degenerate because $\psi$ is \'etale. 
Applying  Proposition \ref{prop_gamma} (3) to $A' \to B^{nm}$ and $P \to B^{nm}$, we get an extendable holomorphic symplectic on $P$. We are thus in the position to apply Theorem \ref{thm1}, which yields a $\Q$-factorial terminal symplectic compactification $Y$ of $P$, with a Lagrangian fibration $Y \to B$, thus concluding the proof.
\end{proof}

Note that in general, we cannot expect  $Y$ to be smooth, as the following examples shows. However, if the fibers are integral we can indeed check smoothness of $Y$, at least in the case when $P$ is an $A$-torsor, as we prove in the Theorem below.

\begin{example} Let $(S,H)$ be a polarized K3 surface of degree $H^2=2g-2$ and let $m \ge 2$ be such that $(m, H^2)\neq (2,2)$. Let $v=(0, mH, -m^2\frac{H^2}{2})$ and $w=( 0, mH,\chi)$ with $gcd(m,\chi)=1$. Let $M=M_{v,H}$, respectively $N=M_{w,H}$, be the moduli spaces of $H$-semistable sheaves on $S$ with Mukai vector $v$, respectively $w$. Then there are Lagrangian fibrations $M \to |mH|$ and $N \to |mH|$, the first of which has a (rational) section. On the open subset parametrizing integral curves, both fibrations are torsors over the relative $\Pic^0$ of the family of curves (in fact, $M$ is a compactification of this abelian group scheme).
 The moduli space $N$ is a smooth compact \hk manifold of K3$^{[n]}$-type and by \cite{Kaledin-Lehn-Sorger}, $M$ is a $\Q$-factorial terminal symplectic variety that is singular and does not admit a symplectic resolution.
In particular, by Theorem \ref{thm1}, no holomorphic symplectic compactification of the Albanese fibration of $N$ is smooth.
\end{example}

\begin{thm} \label{thm_smoothtorsors}
Let $f: X \to B$ be a Lagrangian fibered smooth projective \hk manifold with integral fibers. Let $A \to B$ be the relative Albanese fibration from Theorem \ref{thm alb}. Then for any quasi-projective $A$-torsor $P \to B$, there exists a smooth projective \hk manifold $Y$ compactifying $P$ and a Lagrangian fibration $Y \to B$ extending $P \to B$. Moreover, locally in the \'etale topology, the two fibrations $X \to B$ and $g: Y \to B$ are isomorphic. In particular, $A$ itself admits a \hk compactification $\bar A \to B$ which has a section.
\end{thm}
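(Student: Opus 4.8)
The plan is to deduce smoothness (and the \'etale-local comparison) by exhibiting the desired compactification as a \emph{twist} of $X$ by an $A$-torsor. First I would record the reductions coming from integrality of the fibres: they are in particular non-multiple with connected smooth locus, so $B=B^{nm}$, the relative Albanese $a\colon A\to B$ of Theorem~\ref{thm alb} is defined over all of $B$, condition (a) of Theorem~\ref{thm1'} is automatic, and $X^{nc}\to B$ is a genuine $A$-torsor (not merely an almost torsor). Moreover $X\setminus X^{nc}$ has codimension $\ge 2$ in $X$: it maps into the discriminant $\Delta\subset B$ (of codimension $\ge 1$) and meets each fibre in its singular locus, which has codimension $\ge 1$ there. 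Theorem~\ref{thm_torsors} already hands us a $\Q$-factorial terminal symplectic compactification of $P$ with a Lagrangian fibration to $B$, so the only thing left is to produce one that is smooth and \'etale-locally isomorphic to $f$.

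For the construction: since $X^{nc}$ and $P$ are both $A$-torsors over $B$, I would form $T:=\underline{\Isom}_{A}(X^{nc},P)$, again an $A$-torsor over $B$ (commutativity of $A$ makes its natural $A$-action free and transitive), together with the tautological isomorphism $T\times^{A}X^{nc}\xrightarrow{\sim}P$. Using the global action of $A$ on $X$ from Theorem~\ref{thm alb}, set $Y:=T\times^{A}X=(T\times_{B}X)/A$. The $A$-action on $T\times_{B}X$ is free, so $Y$ exists as an algebraic space proper over $B$ (its fibres are $(A_b\times X_b)/A_b\cong X_b$), and over any \'etale $\nu\colon V\to B$ trivialising $T$ there is a canonical isomorphism $Y_{V}\cong X_{V}$ over $V$. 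Hence $Y$ is smooth, and since $T\times^{A}X^{nc}\cong P$ while $X^{nc}$ is dense in $X$ with complement of codimension $\ge 2$, the open subset $P\subset Y$ is dense with complement of codimension $\ge 2$. To see $Y$ is a projective scheme I would descend a $B$-relatively ample bundle through the $A$-torsor $T\times_{B}X\to Y$: starting from the relatively ample bundle on $X/B$, and using that $A$ is generically an abelian scheme, one arranges --- after a suitable power and, if necessary, a finite base change, then invoking descent \cite[\S 6.2]{BLR} --- an $A$-linearisation, so that the bundle descends and $g\colon Y\to B$ becomes projective; $B$ being projective, $Y$ is projective. (Equivalently, $Y$ is birational over $B$ to the compactification of Theorem~\ref{thm_torsors}, which by Theorem~\ref{thm1'}(2) is then itself smooth; but the explicit twist is the cleaner object for the \'etale-local statement.)

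Next I would install the symplectic form and check $Y$ is of the right type. The $A$-action on $X$ preserves $\sigma_X$ --- its generating vector fields are Hamiltonian, being $\sigma_X$-dual to pullbacks of locally exact $1$-forms from $B$, exactly as in the Arnold--Liouville picture in the proof of Proposition~\ref{prop arnold-liouville} (compare the computation in the proof of Proposition~\ref{prop_gamma2}, where translations act trivially on the graded pieces of $\Omega^2$). So the copies of $\sigma_X$ on the charts $Y_V\cong X_V$ agree on overlaps (the transitions being translations by sections of $A$) and glue to a holomorphic symplectic form $\sigma_Y$ on $Y$ with respect to which $g$ is Lagrangian; thus $Y$ is a smooth projective symplectic variety. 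To upgrade this to \emph{irreducible} holomorphic symplectic I would argue by comparison with $X$: translations act trivially on $R^i f_*\mathcal{O}_X\cong\Omega^i_B$ (\cite{Matsushita-higher}), so these isomorphisms descend to $R^i g_*\mathcal{O}_Y\cong\Omega^i_B$, whence the Leray spectral sequence gives $h^{2,0}(Y)=h^2(\mathcal{O}_Y)\le h^2(\mathcal{O}_X)=1$ (and $\ge 1$ since $\sigma_Y$ exists) and likewise $b_1(Y)=b_1(X)=0$; with the Beauville--Bogomolov decomposition this forces $Y$ to be an irreducible holomorphic symplectic manifold. (This is precisely the cohomological comparison underlying Proposition~\ref{prop-coho}.)

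Finally, $g\colon Y\to B$ is by construction isomorphic to $f\colon X\to B$ after any \'etale base change trivialising $T$, which is the ``moreover''; and applying everything with $P=A$ (so that $T$ has class $-[X^{nc}]$) gives the \hk compactification $\bar A:=T\times^{A}X$ of $A$, whose zero section $B\to A\hookrightarrow\bar A$ is a section of the separated (projective) morphism $\bar A\to B$ and hence a closed immersion, i.e.\ a regular section, as in Corollary~\ref{cor-ratsect}. I expect the main obstacle to be the projectivity of the twist $Y=T\times^{A}X$: descending the polarisation of $X/B$ through the torsor $T\times_B X\to Y$ is exactly the projectivity problem for Shafarevich--Tate twists of Lagrangian fibrations studied in \cite{Abasheva-Rogov,Abasheva}. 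A secondary delicate point is the passage from ``smooth symplectic'' to ``irreducible holomorphic symplectic'', which I would handle by the cohomological comparison with $X$ sketched above; once both are dealt with, integrality of the fibres is what makes $Y$ smooth the moment it is known to be a scheme.
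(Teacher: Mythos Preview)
Your approach is genuinely different from the paper's, and it has a real gap that you yourself flag: projectivity of the twist $Y=T\times^{A}X$. Your proposed fix---descend a relatively ample line bundle through the $A$-torsor $T\times_B X\to Y$ by arranging an $A$-linearisation---does not work as stated. The group $A$ is not finite; over the smooth locus its fibres are abelian varieties, and translation by a non-torsion point of an abelian variety does \emph{not} preserve the numerical class of an ample line bundle (it shifts it by a point of $\Pic^0$). So no power of a relatively ample bundle on $X$ carries an $A$-linearisation, and the descent argument breaks down. This is exactly the Shafarevich--Tate projectivity problem you cite from \cite{Abasheva-Rogov,Abasheva}, and it is not bypassed by a finite base change. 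Your fallback (``$Y$ is birational to the compactification of Theorem~\ref{thm_torsors}, which by Theorem~\ref{thm1'}(2) is then smooth'') also needs $Y$ projective: Proposition~\ref{prop-smbir} requires the birational model to be a \emph{projective} \hk manifold, and the Kawamata-flops argument underlying it does not apply to a mere smooth algebraic space.

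The paper goes the other way around and avoids the projectivity problem entirely. It starts with the $\Q$-factorial terminal symplectic \emph{projective} compactification $Y$ supplied by Theorem~\ref{thm_torsors}, and then proves that the birational map $\phi_V\colon X_V\dashrightarrow Y_V$ (induced by simultaneously trivialising both torsors over an \'etale $V\to B$) is an isomorphism. The argument has two steps: first, the negativity lemma shows $\phi_V$ is an isomorphism in codimension~$2$; second, a Matsusaka--Mumford type criterion reduces the question to finding a relatively ample class on $Y_V$ whose pullback is relatively ample on $X_V$. The latter is Proposition~\ref{ampleness}, whose key input is the Voisin--Matsushita result that $\im[H^2(X,\Q)\to H^2(X_t,\Q)]$ has rank one: this forces any two ample classes to restrict to proportional classes on the general fibre, and integrality of the fibres (via separatedness of the relative Picard scheme) propagates numerical equivalence to all fibres, so Nakai--Moishezon finishes. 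Thus the paper never constructs the twist globally; it compares polarisations on the two already-projective objects instead.
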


\begin{rem}
In \cite[Theorem C]{Abasheva-Rogov}, it is proved that if $f: X \to \mathbb P^n$ has integral fibers, then there is a unique Shafarevich-Tate twist of $X$ with a section provided $X$ satisfies $H^3(X, \Q)=0$, and $H^2(\mathbb P^n, \Gamma)=0$ (here $\Gamma$ is the group from Remark \ref{rem-gamma-ar}). In particular, the fibration $f$ can be deformed to a fibration with a holomorphic section through the Shafarevich-Tate family. It should be noticed that while Theorem \ref{thm_smoothtorsors} does not need the cohomological vanishing, it does not say anything about the deformation class of the fibration with a section. With some other assumptions on the Lagrangian fibration $X$ (maximal holonomy and indivisibility of the general fiber), the paper \cite{Bogomolov-Kamenova-Verbitsky} also shows the existence of a Shafarevich-Tate twist of $X$ with a meromorphic section.
\end{rem}


Before proving the Theorem above, we should remark that we expect the fibrations obtained above to be deformation equivalent to $X$. As a starting point, we show they have the same rational cohomology.

\begin{prop} \label{prop-coho}
Let $X$ and $Y$ be as in Theorem \ref{thm_smoothtorsors}, then $H^*(X) \cong H^*(Y)$ as rational Hodge structures.
\end{prop}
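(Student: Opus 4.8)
The plan is to exploit the fact, established in Theorem \ref{thm_smoothtorsors}, that $X \to B$ and $g : Y \to B$ are isomorphic locally in the \'etale topology, together with the existence (from Theorem \ref{thm alb}) of the relative Albanese $A \to B$ acting on both fibrations and making $X^{nc}$ and $Y^{nc} = P$ into $A$-torsors. Since the fibers of $f$ are integral, one has $X = X^{nc}$ and $Y = Y^{nc} = P$ over all codimension one points of $B$, and in fact (by Remark \ref{rem-sing-alb} and the fact that $X$ is a manifold) $B = B^{nm}$, so $P$ is a genuine $A$-torsor over all of $B$. The first step is therefore to fix an \'etale cover $\nu : B' \to B$ over which $P$ acquires a section, yielding an isomorphism $P_{B'} \cong A_{B'} \cong X_{B'}$ of schemes over $B'$, and hence an \'etale correspondence between $X$ and $Y$.

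The key step is to produce, from this local \'etale isomorphism, a \emph{correspondence} $\Gamma \in \CH^{\dim X}(X \times Y)$ inducing an isomorphism of rational cohomology that respects the Hodge structures. Concretely, one takes the closure in a suitable compactification of $\frac{1}{\deg \nu} \nu_*$ of the graph of the isomorphism $X_{B'} \dashrightarrow Y_{B'}$, exactly as in the proof of Proposition \ref{prop_gamma2} (where the same device was used to transport the symplectic form). Because both $X$ and $Y$ are smooth projective, this cycle acts on cohomology by a morphism of Hodge structures $\Gamma_* : H^*(X) \to H^*(Y)$; one checks it is an isomorphism by restricting to the open sets $X_{B'}$, $Y_{B'}$ where it is literally the pushforward along an \'etale map of the isomorphism, and then using a weight/Leray-spectral-sequence argument (the decomposition theorem, applicable since $(X,A)$ is a $\delta$-regular weak abelian fibration, shows $H^*(X)$ and $H^*(Y)$ are built from the same local systems $R^i f_* \Q$ on $B$, which agree \'etale locally hence globally as the monodromy representations coincide). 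An equivalent and perhaps cleaner route: show directly that $R^i f_* \Q \cong R^i g_* \Q$ as (polarizable) variations of Hodge structure on $B^{nm}$, since both are determined by the Tate module of $A$ (the torsor structure does not affect the sheaves $R^i f_* \Q$), then run the perverse Leray spectral sequence for $f$ and $g$ and invoke the support theorem of \cite{dCRS} to conclude that the two spectral sequences have identical $E_2$-pages and identical (zero, by degeneration) differentials, and finally account for the supports sitting over the discriminant — here the integrality of the fibers forces the discriminant phenomena of $f$ and $g$ to match because the fibers themselves are \'etale-locally identified.

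The main obstacle I expect is controlling the contribution of the \emph{bad locus} — the complement of $B^{nm}$ in the compactifications, i.e., the boundary divisors added in passing from $P$ and $X$ to $Y$ and $X$, and the non-integral fiber locus if one works over the compactified $Y$ rather than $X$. Over $B^{nm}$ everything is transparent because of the \'etale-local isomorphism, but the correspondence $\Gamma$ picks up components supported over $B \setminus B^{nm}$, and one must argue these do not interfere with bijectivity on cohomology; here one uses that $\codim(B \setminus B^{nm}) \ge 2$ is \emph{not} automatic — rather, one uses that $Y$ was produced by Theorem \ref{thm1} so that its boundary maps to a codimension $\ge 2$ subset of $B$, hence contributes nothing in low cohomological degree, and a weight argument handles the rest. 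A clean way to package this: first prove the isomorphism of VHS $R^i f_* \Q \cong R^i g_* \Q$ on all of $B$ (not just $B^{nm}$) using that both $f$ and $g$ are Lagrangian fibrations with $A$ acting and that, by Matsushita's results quoted in the proof of Corollary \ref{cor-pic}, these higher direct images are locally free and equal to $\Omega^i_B$ on the smooth locus; then the isomorphism $H^*(X) \cong H^*(Y)$ follows from $E_2$-degeneration of the (perverse) Leray spectral sequence together with the support theorem, with the boundary contributions being forced to agree by the \'etale-local identification of the fibrations over a big open set. I would carry out the direct-correspondence argument as the primary proof and remark that the VHS argument gives an alternative.
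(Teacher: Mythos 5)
Your ``alternative and perhaps cleaner route'' is in fact precisely the paper's proof, and it is complete as you sketch it: since $(X,A)$ and $(Y,A)$ are $\delta$-regular weak abelian fibrations with integral fibers (Theorem \ref{thm alb}), the support theorem \cite[7.2.1]{Ngo-lemme} forces the decomposition theorem for $Rf_*\Q_X$ and $Rg_*\Q_Y$ to have full support, so both complexes are direct sums of shifts of $\mc{IC}_{\P^n}(R^i f_*\Q_{X_U})$, resp.\ $\mc{IC}_{\P^n}(R^i g_*\Q_{Y_U})$; as $X_U$ and $Y_U$ are torsors over the same abelian scheme $A_U$, these local systems are canonically isomorphic, and the isomorphism of rational Hodge structures follows. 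Note that with integral fibers one has $B^{nm}=B$, so your worry about $\codim(B\setminus B^{nm})$ is vacuous, and the full-support statement handles everything over the discriminant at once --- there is no separate boundary bookkeeping to do.

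Your primary route, the explicit correspondence $\Gamma=\frac{1}{\deg\nu}\,\overline{\nu_*\Gamma_{\phi}}$, is a reasonable device (it is exactly how the paper transports the symplectic form in Proposition \ref{prop_gamma2}), but as a proof of the cohomology isomorphism it has a gap you do not close: $\Gamma_*$ is an average over the sheets of $\nu$ of the various local isomorphisms, which differ by translations by local sections of $A$, and the closure may acquire extra components supported over the discriminant. Establishing that this average is bijective on all of $H^*$ is not ``literally the pushforward of an isomorphism,'' and your own fallback for checking bijectivity is the decomposition-theorem comparison --- at which point the correspondence is doing no work. I would promote the support-theorem argument to the main (and only) proof.
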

\begin{proof}
Since by Theorem \ref{thm alb} the pairs $(X,A)$ and $(Y,A)$ are $\delta$-regular weak abelian fibrations with integral fibers, by \cite[7.2.1]{Ngo-lemme}, the decomposition theorem applied to the Lagrangian fibrations is full support. This means that, letting $U \subset \mathbb P^n$ be the locus parametrizing smooth fibers, the perverse sheaves appearing in the decomposition of $Rf_*\mathbb Q_X$, respectively $Rg_*\mathbb Q_Y$, are (shifts of) the intersection complexes $\mc{IC}_{\mathbb P^n} ( R^i f_* \mathbb Q_{X_U})$, respectively  $\mc{IC}_{\mathbb P^n} ( R^i g_* \mathbb Q_{Y_U})$. Since the local systems $ R^i f_* \mathbb Q_{X_U} $ and $  R^i g_* \mathbb Q_{Y_U}$ are naturally isomorphic, the result follows.
\end{proof}

\begin{proof}[Proof of Theorem \ref{thm_smoothtorsors}] By Theorem  \ref{thm_torsors}, there exists a $\Q$-factorial terminal symplectic projective compactification $Y $ of $P$, with a Lagrangian fibration $g: Y \to B$ extending $P \to B$. 
To prove the theorem, it is enough to prove the second statement, namely that $Y \to B$ is locally isomorphic to $X \to B$. Let $ V \to B$ be an \'etale morphism trivializing the $A$-torsors $P \to B$ and $X^{nc} \to B$. Assume $V$ is connected and denote by $X_V \to V$ and $Y_V \to V$ the base change morphisms. The trivialization of the two $A$-torsors induces a birational map $\phi_V: X_V \dashrightarrow Y_V$, which extends the isomorphism $X^{nc}_V \to P_V$. We claim that $\phi_V$ is an isomorphism in codimension $2$.   Let $t: Z \to Y_V$ be a resolution of $Y_V$ with a morphism $s: Z \to X_V$. To start, note that the complement of $X^{nc}_V$ in $X_V$ has codimension $\ge 2$. Thus $\phi_V$ contracts no divisors and hence the exceptional locus $Ex(t)$ of $t$ is contained in the exceptional locus $Ex(s)$ of $s$.
The canonical classes of $X_V$ and $Y_V$ are trivial,  $X_V$ has terminal singularities  (it is smooth), and $Y_V$ has canonical singularities (it  is a symplectic variety). Thus, the negativity Lemma (see pg. 420 of \cite{Kawamata-flops}) shows that $Ex(s) \subset Ex(t)$.  Hence $Ex(s)=Ex(t)$ and $\phi_V$ is an isomorphism in codimension $2$.

By a Matsusaka-Mumford type argument (see  \cite[Thm 13.23]{Hacon-Kovacs}), in order to show that $\phi_V$ is an isomorphism, it suffices to show that there is a relatively ample line bundle on $Y_V$ whose pullback to $X_V$ is relatively ample over $X_V$. This is the content of Proposition \ref{ampleness} below, which ends the proof of Theorem \ref{thm_smoothtorsors}.
\end{proof}

\begin{prop} \label{ampleness}
There exists a relatively ample line bundle $H_{Y_V}$ on $Y_V/V$ with the property that $\varphi_V^*(H_{P_V})$ is relatively ample on $X_V/V$.
\end{prop}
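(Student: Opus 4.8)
The plan is to construct the relatively ample line bundle on $Y_V$ directly and then show its pullback is ample, using the fact that $\varphi_V$ is an isomorphism in codimension $2$ and that both fibrations become trivial $A$-torsors after the \'etale base change $V \to B$. First I would observe that since $V$ trivializes both torsors, there is a section of $Y_V \to V$, so $Y_V \cong A_V$ as $A_V$-schemes (after choosing the basepoint), and likewise $X_V \cong A_V$; under these identifications $\varphi_V$ corresponds to a translation by a section $\tau \colon V \to A_V$. The natural thing is therefore to produce a relatively ample line bundle on $A_V/V$ and transport it along both trivializations.

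To build such a line bundle, I would use the theta-type construction: on the locus $U \subset V$ over which the fibers are abelian varieties, choose a relatively ample line bundle (e.g. coming from a polarization of the $A$-torsor, or the restriction of a relatively ample bundle on $X$ itself), and extend it over all of $V$. Here the codimension $\ge 2$ complement of $X^{nc}_V$ in $X_V$ is crucial: because $X_V$ is smooth (it is \'etale over the smooth \hk manifold $X$) and the difference locus is small, a line bundle defined on $X^{nc}_V$ extends uniquely to $X_V$ by normality/reflexivity, and a relatively ample bundle on $X$ restricts to something we can use. Concretely, I would take $H_X$ a relatively ample line bundle for $f \colon X \to B$ (which exists by projectivity of $f$), pull it back to get $H_{X_V}$ on $X_V$, transport it via the trivialization to a reflexive line bundle on $A_V$, and then push it forward via the other trivialization to a line bundle $H_{Y_V}$ on $Y_V$. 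Since $\varphi_V$ is an isomorphism in codimension $2$ and $Y_V$ has terminal (indeed canonical, $\Q$-factorial) singularities while the divisorial parts match up, the reflexive pullback $\varphi_V^*(H_{Y_V})$ agrees with $H_{X_V}$ outside codimension $2$, hence (both being reflexive on the normal variety $X_V$) they are isomorphic. This gives $\varphi_V^* H_{Y_V} \cong H_{X_V}$, which is relatively ample by construction.

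The remaining point is to check that $H_{Y_V}$ is itself relatively ample on $Y_V/V$, not merely relatively nef or relatively big. For this I would argue fiberwise: over the open locus $U$ the fibers of $Y_V \to V$ and $X_V \to V$ are isomorphic abelian varieties and $H_{Y_V}$ restricts to a polarization there; over the remaining (codimension $\ge 1$) locus one checks that the restriction of $H_{Y_V}$ to each fiber of $g$ is still ample, using that $H_{X_V}$ restricts to an ample bundle on each fiber of $f$ (as $H_X$ is $f$-ample) together with the identification of fibers of $g$ with the corresponding fibers of $f$ away from a codimension $\ge 2$ set — and then ampleness of a line bundle on a projective variety can be tested after removing a codimension $\ge 2$ closed subset combined with the numerical criterion, or more simply via the fact that a line bundle on a family with connected fibers over a curve-slice is relatively ample iff it is ample on every fiber (EGA / Nakai--Moishezon in the relative setting). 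Alternatively, and perhaps more cleanly, I would invoke that relative ampleness is preserved under the correspondence given by $\varphi_V$ precisely because the two varieties agree outside codimension $2$ and the numerical classes of curves not contracted by $f$ resp. $g$ are matched.

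The main obstacle I anticipate is the second paragraph: making rigorous the claim that a reflexive line bundle extended across the small locus $Y_V \setminus P_V$ remains \emph{relatively ample} rather than just nef or big. Ampleness is not a reflexive/codimension-$2$ notion in general — removing even a point can destroy it — so one must genuinely control the positivity of $H_{Y_V}$ on the boundary fibers. The way around this is to not extend abstractly but to transport the genuinely $f$-ample bundle $H_X$ fiber by fiber: since $\varphi_V$ induces an isomorphism between an open dense subset of each fiber of $g$ and (an open dense subset of) the corresponding fiber of $f$, and since for a \emph{projective} family with a relatively ample polarization the Hilbert polynomials match, one deduces that $H_{Y_V}$ restricts to an ample class on each fiber of $g$, and hence is relatively ample by the relative Nakai--Moishezon criterion. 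Carefully tracking which fibers correspond and checking the fiber-dimension/degree bookkeeping — especially over the non-integral locus in $B \setminus B^{nm}$, which however does not appear here since $X \to B$ has integral fibers — is the technical heart of the argument.
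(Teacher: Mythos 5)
There is a genuine gap, and it sits exactly where you flagged it: showing that the transported bundle $H_{Y_V}$ is relatively ample. Your proposed fixes do not close it. The restriction of $\varphi_V$ to a fiber identifies only the dense open subsets $X^{nc}_{V,t}$ and $P_{V,t}$; over a discriminant point the complement of $X^{nc}_{V,t}$ in $X_{V,t}$ can be a \emph{divisor} of that fiber (the codimension $\ge 2$ bound holds in the total space, not fiberwise), so the restriction of a line bundle to this open subset does not determine its numerical class on the whole fiber, let alone its ampleness. The appeal to matching Hilbert polynomials is circular (it presupposes control of the cohomology of $H_{Y_V}$ on the special fibers), and the claim that ampleness ``can be tested after removing a codimension $\ge 2$ closed subset'' is false for the same reason. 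Your closing sentence (``relative ampleness is preserved under the correspondence because the two varieties agree outside codimension $2$'') restates the proposition rather than proving it.

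The paper goes the other way around and supplies two ingredients you are missing. First, it takes $H_{Y_V}$ to be the pullback of a genuinely ample line bundle $H_Y$ on the projective variety $Y$, so relative ampleness of $H_{Y_V}$ is free; the work is then in showing $\varphi_V^* H_{Y_V}$ is relatively ample on $X_V$. Second, to compare $\varphi_V^* H_{Y_V}$ with a given relatively ample $H_{X_V}$ it uses: (i) the Voisin--Matsushita rank-one statement $\im[H^2(X,\Q) \to H^2(X_t,\Q)] = \Q$, together with the invariant cycle theorem and a lemma on $\Q$-factorial rational singularities to get the same statement for the singular $Y$ --- this lets one \emph{choose} $H_X$ and $H_Y$ whose restrictions to a general fiber agree; and (ii) a specialization argument: both $H_{X_V}$ and $\varphi_V^* H_{Y_V}$ are honest line bundles on all of $X_V$ (since $\varphi_V$ is an isomorphism in codimension one), they are numerically equivalent on the general fiber, and because $X_V \to V$ is projective with integral fibers the numerically trivial locus in the relative Picard scheme is closed, so they are numerically equivalent on \emph{every} fiber; Nakai--Moishezon then gives ampleness. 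Step (ii) is precisely the mechanism that propagates positivity from the general fiber to the special fibers, replacing the fiber-by-fiber open-dense comparison that cannot work. If you wanted to salvage your direction (transporting $H_{X_V}$ to $Y_V$), you would still need both ingredients, applied on the $Y$-side, where integrality of the fibers of $Y_V \to V$ is not a priori available --- which is why the paper runs the comparison on the smooth side $X_V$.
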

\begin{proof}
Let $t \in B$ be a general point.
By results of Voisin \cite{Voisin-Lagrangian} and Matsushita \cite{Matsushita-deformation}, it follows that the rank of the restriction morphism $H^2(X,\Q) \to H^2(X_t,\Q)$ is one. Let $U \subset B$ be the locus parametrizing smooth fibers of $X \to B$. The invariant cycle theorem gives
\be \label{restrX}
\im[H^2(X) \to H^2(X_t)]=\im[H^2(X_U) \to H^2(X_t)]=H^0(U, R^2 f_* \Q_X)=\Q
\ee
Let $\nu: \wt Y \to Y$ be a resolution of the singularities of $Y$. Note that $Y$ has rational singularities (it is a symplectic variety) and is $\Q$-factorial. By Lemma \ref{lem_rational} below, $\im[H^2(\wt Y) \to H^2(Y_U)]=\im[H^2(Y) \to H^2(Y_U)]$ and hence, by the invariant cycle theorem,
\be \label{imP}
\im[H^2(Y) \to H^2(Y_t)]=\im[H^2(\wt Y) \to H^2(Y_t)]=H^0(U, R^2 g_* \Q_{Y_U})=\Q.
\ee
By (\ref{restrX}) and (\ref{imP}), there exists ample line bundles $H_X$ on $X$ and $H_Y$ on $Y$ such that under the natural isomorphism $H^2(X_t, \Q) = H^2(Y_t, \Q)$ determined by the choice of a point,
\[
[(H_X)_{|X_t}]=[(H_Y)_{|Y_t}].
\]
Let $H_{Y_V}$ (resp. $H_{X_V}$) be the pullbacks of $H_X$ (resp.  $H_Y$) to $X_V$ (resp.  $Y_V$). These are relatively ample over $V$.
Now consider the pullback $\phi^*H_{Y_V}$ to $X_V$. By construction and the fact that $\phi_V$ is an isomorphism over the smooth locus, it follows that the classes of $(\phi^*H_{Y_V})_{X_t}$ and of $(H_{X_V})_{|X_t}$ coincide for every $t$ for which $X_t$ is smooth. The morphism $X_V \to V$ is flat, projective, with intgeral fibers. Hence the relative Picard functor is represented by a separated scheme. The locus parametrizing numerically trivial line bundles is represented by a closed subscheme (\cite[Thm 4 \S 8.4]{BLR}). Thus, if $\phi^*H_{P_V}$ and $H_{X_V}$ are numerically equivalent on the general fiber, they are numerically equivalent for every fiber (see also \cite[Lemma 4.4]{Voisin-twisted}).
Since $H_{X_V}$ is ample over $V$, by the Nakai-Moishezon criterion it follows that $\phi^*H_{P_V}$ is also ample over $V$.
\end{proof}

\begin{lemma} \label{lem_rational}
Let $Z$ be a normal variety with $\Q$-factorial and rational singularities and let $v: \wt Z \to Z$ be a resolution of singularities. Then for every $W \subseteq Z^{reg}$ open subset,
\[
\im[H^2( Z) \to H^2(W)]=\im[H^2(\wt Z) \to H^2(W)]=W_0(H^2(W)).
\]
\end{lemma}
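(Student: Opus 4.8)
The plan is to prove the chain of equalities by working with the mixed Hodge structures on the cohomology groups in play. The last equality, $\im[H^2(\wt Z) \to H^2(W)] = W_0(H^2(W))$, is the standard fact (used already in the proof of Proposition \ref{propextmhs}) that for any smooth projective compactification $\overline{W}$ of a smooth variety $W$, the image of $H^2(\overline{W},\Q) \to H^2(W,\Q)$ is exactly the lowest weight piece $W_0 H^2(W)$; since $\wt Z$ is smooth and proper, restricting first to $\wt Z$'s (compact) total space and then to $W$ factors through any smooth projective compactification of $W$, so strictness of morphisms of MHS gives this equality. The content of the lemma is therefore the first equality, $\im[H^2(Z) \to H^2(W)] = \im[H^2(\wt Z) \to H^2(W)]$, i.e. that passing to a resolution does not enlarge the image in $H^2(W)$.

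**First I would** observe that since $v : \wt Z \to Z$ is a resolution with $\wt Z$ smooth, the restriction $H^2(\wt Z) \to H^2(W)$ factors as $H^2(\wt Z) \to H^2(v^{-1}(W)) = H^2(W)$ (the last equality because $W \subseteq Z^{reg}$ and $v$ is an isomorphism over $Z^{reg}$), while $H^2(Z) \to H^2(W)$ factors through $H^2(\wt Z)$ via $v^*$. Hence $\im[H^2(Z) \to H^2(W)] \subseteq \im[H^2(\wt Z) \to H^2(W)]$ is automatic, and only the reverse inclusion needs proof. The key input is that $Z$ has rational singularities, so $R v_* \mc O_{\wt Z} = \mc O_Z$ and, more to the point, the natural map $H^2(Z, \mc O_Z) \to H^2(\wt Z, \mc O_{\wt Z})$ is an isomorphism; combined with the fact that $Z$ is $\Q$-factorial (so that the exceptional divisors of $v$ are $\Q$-Cartier and the Leray/weight arguments go through), this forces $v^* : H^2(Z,\Q) \to H^2(\wt Z,\Q)$ to be surjective onto the sub-Hodge-structure that survives restriction to $W$.

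**The cleanest way** to package this is via the weight filtration and the decomposition/strictness machinery: the map $v^*: H^2(Z,\Q) \to H^2(\wt Z,\Q)$ is a morphism of MHS, and by the rational singularities hypothesis the cokernel is concentrated in the part of $H^2(\wt Z)$ supported on the exceptional locus $E = \mathrm{Exc}(v)$, i.e. it lies in the image of $H^2_E(\wt Z) \to H^2(\wt Z)$ (by excision and the long exact sequence of the pair, using $H^1(\wt Z \setminus E) = H^1(Z \setminus v(E))$ together with $H^2(Z,\mc O_Z) \xrightarrow{\sim} H^2(\wt Z, \mc O_{\wt Z})$, which kills the $F^0$-part of the cokernel, while $\Q$-factoriality kills the divisorial — weight $4$, type $(1,1)$ — part since each exceptional divisor is $\Q$-rationally equivalent to a pullback modulo $v^*\Pic$). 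Since $E$ maps into $Z \setminus W$, any class supported on $E$ restricts to zero in $H^2(v^{-1}(W)) = H^2(W)$. Therefore $\im[H^2(\wt Z) \to H^2(W)] = \im[v^* H^2(Z) \to H^2(W)] = \im[H^2(Z) \to H^2(W)]$, which is the desired inclusion.

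**The main obstacle** will be the careful verification that the cokernel of $v^*$ on $H^2$ is indeed ``supported on $E$'' in the strong sense needed — i.e. dies upon restriction to $v^{-1}(W)$ — for which one genuinely needs both hypotheses: rational singularities to control the holomorphic (Hodge-filtration $F^0$) direction via $H^i(\mc O)$, and $\Q$-factoriality to control the algebraic $(1,1)$-direction coming from exceptional divisor classes. One must be slightly careful that $W$ need not be dense or connected and $Z$ need not be proper, but since all the maps in sight are morphisms of MHS and strictness is preserved under restriction to open subsets, the argument is insensitive to these points; alternatively, one can reduce to the proper case by choosing a projective compactification of $Z$ with rational $\Q$-factorial singularities (possible by taking a resolution and running a relative MMP, or simply by noting that only the local structure of $v$ near $E$ matters and applying \cite[\S 12--13]{Kollar-Mori} locally).
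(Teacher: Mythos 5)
Your overall skeleton is the same as the paper's: both reduce the lemma to showing that $H^2(\wt Z,\Q)=v^*H^2(Z,\Q)+\sum\Q[E_i]$, where the $E_i$ are the $v$-exceptional divisors, and then observe that the $[E_i]$ restrict to zero on $v^{-1}(W)=W$. The paper obtains this decomposition from two inputs: the exact sequence $0\to H^2(Z,\Q)\to H^2(\wt Z,\Q)\to H^0(Z,R^2v_*\Q)$ coming from the Leray spectral sequence and rational singularities, and \cite[Prop.~12.1.6]{Kollar-Mori-flips}, which says precisely that for a $\Q$-factorial variety with rational singularities the images of $H^2(\wt Z,\Q)$ and of $\sum\Q[E_i]$ in $H^0(Z,R^2v_*\Q)$ coincide.

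The gap in your write-up is that you try to establish this key decomposition in a parenthetical, and the sketch does not hold together as stated. Concretely: (i) the assertion that ``$\Q$-factoriality kills the divisorial part since each exceptional divisor is $\Q$-rationally equivalent to a pullback modulo $v^*\Pic$'' is backwards --- exceptional divisors are essentially never pullbacks; the correct use of $\Q$-factoriality is that for \emph{any} divisor $D$ on $\wt Z$ the Weil divisor $v_*D$ is $\Q$-Cartier, so that $D-v^*(v_*D)$ is $v$-exceptional and $v^*(v_*D)$ comes from $H^2(Z,\Q)$; (ii) even granting that $H^2(\mathcal O_Z)\cong H^2(\mathcal O_{\wt Z})$ kills the $(2,0)$ and $(0,2)$ parts of $\coker(v^*)$, you must still show that the remaining weight-$2$, type-$(1,1)$ piece of the cokernel is spanned by \emph{algebraic} classes before the $\Q$-factoriality argument can even be applied; this needs polarizability/semisimplicity of weight-$2$ Hodge structures plus Lefschetz $(1,1)$ (hence projectivity of $\wt Z$), and is not addressed --- the ``weight $4$'' label you attach to this piece is also incorrect, as divisor classes in $H^2$ have weight $2$. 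These are exactly the points that \cite[Prop.~12.1.6]{Kollar-Mori-flips} packages; you should either cite it or supply the missing semisimplicity step. Your treatment of the second equality and of the easy inclusion is fine, modulo the fact that it, like the paper's, tacitly assumes properness.
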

\begin{proof} We only need to prove the first equality.
Let  $E_i \subset \wt Z$ be the exceptional divisors of $v: \wt Z \to Z$. By \cite[Prop. 12.1.6]{Kollar-Mori-flips},
\be \label{kollarmori}
\im [H^2(\wt Z) \to H^0(Z, R^2 v_* \Q_{\wt Z})]=\im [\sum \Q [E_i] \to H^0(Z, R^2 \nu_* \Q_{\wt Z})],
\ee
where $[E_i]$ denotes the class of $E_i$ in $H^2(\wt Z)$. Since $Z$ has rational singularities, the Leray spectral sequence for $v$ yields
\be \label{leray}
0 \to H^2(Z, \Q) \to H^2(\wt Z, \Q) \to H^0(Z, R^2 v_* \Q_{\wt Z}).
\ee
Since $[E_i]_{|Z^{reg}}=0$, (\ref{kollarmori}) and (\ref{leray}) give $\im[H^2(\wt Z) \to H^2(Z^{reg})]=\im[H^2( Z) \to H^2(Z^{reg})]$. The statement of the Lemma with an arbitrary $W$ follows, since restriction to open subsets induces a surjection on the lowest piece of the weight filtration.
\end{proof}.


\bibliographystyle{alpha}

\bibliography{bibliographyPicAlb.bib}

\newcommand{\etalchar}[1]{$^{#1}$}
\begin{thebibliography}{DHMV24}

\bibitem[Aba24]{Abasheva}
Anna Abasheva.
\newblock Shafarevich-tate groups of holomorphic lagrangian fibrations ii,
  2024.

\bibitem[ACLS23]{ACLS}
Giuseppe Ancona, Mattia Cavicchi, Robert Laterveer, and Giulia Sacc{\`a}.
\newblock Relative and absolute lefschetz standard conjectures for some
  lagrangian fibrations, 2023.

\bibitem[AF16]{Arinkin-Fedorov}
Dima Arinkin and Roman Fedorov.
\newblock Partial {F}ourier-{M}ukai transform for integrable systems with
  applications to {H}itchin fibration.
\newblock {\em Duke Math. J.}, 165(15):2991--3042, 2016.

\bibitem[AF24]{Ancona-Fratila}
Giuseppe Ancona and Drago\c~s Fratila.
\newblock Ng\~o's support theorem and polarizability of quasi-projective
  commutative group schemes.
\newblock {\em \'Epijournal G\'eom. Alg\'ebrique}, 8:11--20, 2024.

\bibitem[AR23]{Abasheva-Rogov}
Anna Abasheva and Vasily Rogov.
\newblock Shafarevich-tate groups of holomorphic lagrangian fibrations, 2023.

\bibitem[ASF15]{ASF}
E.~Arbarello, G.~Sacc\`a, and A.~Ferretti.
\newblock Relative {P}rym varieties associated to the double cover of an
  {E}nriques surface.
\newblock {\em J. Differential Geom.}, 100(2):191--250, 2015.

\bibitem[BCG{\etalchar{+}}24]{BCGPSV}
Emma Brakkee, Chiara Camere, Annalisa Grossi, Laura Pertusi, Giulia Sacc{\`a},
  and Sasha Viktorova.
\newblock Irreducible symplectic varieties via relative prym varieties, 2024.

\bibitem[BCHM10]{BCHM}
C.~Birkar, P.~Cascini, Ch. Hacon, and J.~McKernan.
\newblock Existence of minimal models for varieties of log general type.
\newblock {\em J. Amer. Math. Soc.}, 23(2):405--468, 2010.

\bibitem[BD85]{Beauville-Donagi}
A.~Beauville and R.~Donagi.
\newblock La vari\'{e}t\'{e} des droites d'une hypersurface cubique de
  dimension {$4$}.
\newblock {\em C. R. Acad. Sci. Paris S\'{e}r. I Math.}, 301(14):703--706,
  1985.

\bibitem[Bea91]{Beauville-intsystem}
A.~Beauville.
\newblock Syst\`emes hamiltoniens compl\`etement int\'egrables associ\'es aux
  surfaces {$K3$}.
\newblock In {\em Problems in the theory of surfaces and their classification
  ({C}ortona, 1988)}, volume XXXII of {\em Sympos. Math.}, pages 25--31.
  Academic Press, London, 1991.

\bibitem[Bea00]{Beauville-sympl-sing}
A.~Beauville.
\newblock Symplectic singularities.
\newblock {\em Invent. Math.}, 139(3):541--549, 2000.

\bibitem[BKV24]{Bogomolov-Kamenova-Verbitsky}
Fedor Bogomolov, Ljudmila Kamenova, and Misha Verbitsky.
\newblock Sections of lagrangian fibrations on holomorphic symplectic
  manifolds, 2024.

\bibitem[BL22]{Bakker-Lehn-global-moduli}
Benjamin Bakker and Christian Lehn.
\newblock The global moduli theory of symplectic varieties.
\newblock {\em J. Reine Angew. Math.}, 790:223--265, 2022.

\bibitem[BLM{\etalchar{+}}21]{BLMNPS}
A.~Bayer, M.~Lahoz, E.~Macr\`\i, H.~Nuer, A.~Perry, and P.~Stellari.
\newblock Stability conditions in families.
\newblock {\em Publ. Math. Inst. Hautes \'{E}tudes Sci.}, 133:157--325, 2021.

\bibitem[BLR90]{BLR}
Siegfried Bosch, Werner L\"utkebohmert, and Michel Raynaud.
\newblock {\em N\'eron models}, volume~21 of {\em Ergebnisse der Mathematik und
  ihrer Grenzgebiete (3) [Results in Mathematics and Related Areas (3)]}.
\newblock Springer-Verlag, Berlin, 1990.

\bibitem[Bro18]{Brosnan}
P.~Brosnan.
\newblock Perverse obstructions to flat regular compactifications.
\newblock {\em Math. Z.}, 290(1-2):103--110, 2018.

\bibitem[dCRS]{dCRS}
M.~de~{C}ataldo, A.~{Rapagnetta}, and G.~{Sacc\`a}.
\newblock {The Hodge numbers of O'Grady 10 via Ng{\^o} strings}.
\newblock {\em J. Math. Pures Appl. (9)}.
\newblock https://doi.org/10.1016/j.matpur.2021.10.004.

\bibitem[DHMV24]{Debarre-Huybrechts-Macri-Voisin}
Olivier Debarre, Daniel Huybrechts, Emanuele Macr\`i, and Claire Voisin.
\newblock Computing {R}iemann-{R}och polynomials and classifying
  hyper-{K}\"ahler fourfolds.
\newblock {\em J. Amer. Math. Soc.}, 37(1):151--185, 2024.

\bibitem[DM96]{Donagi-Markman}
R.~Donagi and E.~Markman.
\newblock Spectral covers, algebraically completely integrable, {H}amiltonian
  systems, and moduli of bundles.
\newblock In {\em Integrable systems and quantum groups ({M}ontecatini {T}erme,
  1993)}, volume 1620 of {\em Lecture Notes in Math.}, pages 1--119. Springer,
  Berlin, 1996.

\bibitem[DMS24]{Dutta-Mattei-Shinder}
Yajnaseni Dutta, Dominique Mattei, and Evgeny Shinder.
\newblock Twists of intermediate jacobian fibrations, 2024.

\bibitem[Dru11]{Druel-exc}
S.~Druel.
\newblock Quelques remarques sur la d\'{e}composition de {Z}ariski divisorielle
  sur les vari\'{e}t\'{e}s dont la premi\`ere classe de {C}hern est nulle.
\newblock {\em Math. Z.}, 267(1-2):413--423, 2011.

\bibitem[Flo24]{Floccari-Kum6}
Salvatore Floccari.
\newblock Sixfolds of generalized {K}ummer type and {K}3 surfaces.
\newblock {\em Compos. Math.}, 160(2):388--410, 2024.

\bibitem[GLR13]{Greb-Lehn-Rollenske}
D.~Greb, C.~Lehn, and S.~Rollenske.
\newblock Lagrangian fibrations on hyperk\"{a}hler manifolds---on a question of
  {B}eauville.
\newblock {\em Ann. Sci. \'{E}c. Norm. Sup\'{e}r. (4)}, 46(3):375--403 (2013),
  2013.

\bibitem[Gri76]{Griffiths-variations-Abel}
Phillip~A. Griffiths.
\newblock Variations on a theorem of {A}bel.
\newblock {\em Invent. Math.}, 35:321--390, 1976.

\bibitem[Gro66]{EGAIV3}
Alexander Grothendieck.
\newblock \'el\'ements de g\'eom\'etrie alg\'ebrique : {IV.} {\'etude} locale
  des sch\'emas et des morphismes de sch\'emas, {Troisi\`eme} partie.
\newblock {\em Publications Math\'ematiques de l'IH\'ES}, 28:5--255, 1966.

\bibitem[HK10]{Hacon-Kovacs}
Ch. Hacon and S.~Kov\'{a}cs.
\newblock {\em Classification of higher dimensional algebraic varieties},
  volume~41 of {\em Oberwolfach Seminars}.
\newblock Birkh\"{a}user Verlag, Basel, 2010.

\bibitem[HL10]{Huybrechts-Lehn}
Daniel Huybrechts and Manfred Lehn.
\newblock {\em The geometry of moduli spaces of sheaves}.
\newblock Cambridge Mathematical Library. Cambridge University Press,
  Cambridge, second edition, 2010.

\bibitem[HLS20]{HLS}
K.~Hulek, R.~Laza, and G.~Sacc\`a.
\newblock The {E}uler number of hyper-{K}\"{a}hler manifolds of {OG}10 type.
\newblock {\em Mat. Contemp.}, 47:151--170, 2020.

\bibitem[IM08]{Iliev-Manivel}
Atanas Iliev and Laurent Manivel.
\newblock Cubic hypersurfaces and integrable systems.
\newblock {\em Amer. J. Math.}, 130(6):1445--1475, 2008.

\bibitem[Kau65]{Kaup}
Wilhelm Kaup.
\newblock Infinitesimale {T}ransformationsgruppen komplexer {R}\"aume.
\newblock {\em Math. Ann.}, 160:72--92, 1965.

\bibitem[Kaw08]{Kawamata-flops}
Yujiro Kawamata.
\newblock Flops connect minimal models.
\newblock {\em Publ. Res. Inst. Math. Sci.}, 44(2):419--423, 2008.

\bibitem[Kim22]{Kim-dual}
Yoon-Joo Kim.
\newblock The dual lagrangian fibration of known hyper-k\"ahler manifolds,
  2022.

\bibitem[Kim24]{Kim-Neron}
Yoon-Joo Kim.
\newblock The n\'eron model of a higher-dimensional lagrangian fibration, 2024.

\bibitem[KLS06]{Kaledin-Lehn-Sorger}
D.~Kaledin, M.~Lehn, and Ch. Sorger.
\newblock Singular symplectic moduli spaces.
\newblock {\em Invent. Math.}, 164(3):591--614, 2006.

\bibitem[KLSV18]{KLSV}
J.~Koll\'{a}r, R.~Laza, G.~Sacc\`a, and C.~Voisin.
\newblock Remarks on degenerations of hyper-{K}\"{a}hler manifolds.
\newblock {\em Ann. Inst. Fourier (Grenoble)}, 68(7):2837--2882, 2018.

\bibitem[KM92]{Kollar-Mori-flips}
J.~Koll\'{a}r and S.~Mori.
\newblock Classification of three-dimensional flips.
\newblock {\em J. Amer. Math. Soc.}, 5(3):533--703, 1992.

\bibitem[KM98]{Kollar-Mori}
J.~Kollar and S.~Mori.
\newblock {\em {Birational geometry of algebraic varieties}}, volume {134} of
  {\em {Cambridge tracts in mathematics}}.
\newblock {Cambridge University Press}, {1998}.

\bibitem[KM09]{Kuznetsov-Markushevich}
A.~Kuznetsov and D.~Markushevich.
\newblock Symplectic structures on moduli spaces of sheaves via the {A}tiyah
  class.
\newblock {\em J. Geom. Phys.}, 59(7):843--860, 2009.

\bibitem[KS21]{Kebekus-Schnell}
Stefan Kebekus and Christian Schnell.
\newblock Extending holomorphic forms from the regular locus of a complex space
  to a resolution of singularities.
\newblock {\em J. Amer. Math. Soc.}, 34(2):315--368, 2021.

\bibitem[Lai11]{Lai}
C.-J. Lai.
\newblock Varieties fibered by good minimal models.
\newblock {\em Math. Ann.}, 350(3):533--547, 2011.

\bibitem[LLSvS17]{LLSvS}
Ch. Lehn, M.~Lehn, Ch. Sorger, and D.~van Straten.
\newblock Twisted cubics on cubic fourfolds.
\newblock {\em J. Reine Angew. Math.}, 731:87--128, 2017.

\bibitem[LLX24]{LLX}
Yuchen Liu, Zhiyu Liu, and Chenyang Xu.
\newblock Irreducible symplectic varieties with a large second betti number,
  2024.

\bibitem[LSV17]{LSV}
R.~Laza, G.~Sacc\`a, and C.~Voisin.
\newblock A hyper-k\"ahler compactification of the intermediate jacobian
  fibration associated with a cubic 4-fold.
\newblock {\em Acta Math.}, 218(1):55--135, 2017.

\bibitem[Mar96]{Markushevich}
D.~Markushevich.
\newblock Lagrangian families of {J}acobians of genus {$2$} curves.
\newblock volume~82, pages 3268--3284. 1996.
\newblock Algebraic geometry, 5.

\bibitem[Mar12]{Markushevich-integrableJac}
D.~Markushevich.
\newblock Integrable systems from intermediate {J}acobians of 5-folds.
\newblock {\em Mat. Contemp.}, 41:49--60, 2012.

\bibitem[Mar14]{Markman-TS}
Eyal Markman.
\newblock Lagrangian fibrations of holomorphic-symplectic varieties of
  {$K3^{[n]}$}-type.
\newblock In {\em Algebraic and complex geometry}, volume~71 of {\em Springer
  Proc. Math. Stat.}, pages 241--283. Springer, Cham, 2014.

\bibitem[Mat99]{Matsushita-fiber}
D.~Matsushita.
\newblock On fibre space structures of a projective irreducible symplectic
  manifold.
\newblock {\em Topology}, 38(1):79--83, 1999.

\bibitem[Mat05]{Matsushita-higher}
Daisuke Matsushita.
\newblock Higher direct images of dualizing sheaves of {L}agrangian fibrations.
\newblock {\em Amer. J. Math.}, 127(2):243--259, 2005.

\bibitem[Mat14]{Matsushita-almost}
D.~Matsushita.
\newblock On almost holomorphic {L}agrangian fibrations.
\newblock {\em Math. Ann.}, 358(3-4):565--572, 2014.

\bibitem[Mat15]{Matsushita-base}
Daisuke Matsushita.
\newblock On base manifolds of {L}agrangian fibrations.
\newblock {\em Sci. China Math.}, 58(3):531--542, 2015.

\bibitem[Mat16a]{Matsushita-deformation}
D.~Matsushita.
\newblock On deformations of {L}agrangian fibrations.
\newblock In {\em K3 surfaces and their moduli}, volume 315 of {\em Progr.
  Math.}, pages 237--243. Birkh\"{a}user/Springer, [Cham], 2016.

\bibitem[Mat16b]{Matteini}
Tommaso Matteini.
\newblock A singular symplectic variety of dimension 6 with a {L}agrangian
  {P}rym fibration.
\newblock {\em Manuscripta Math.}, 149(1-2):131--151, 2016.

\bibitem[MO22]{Mongardi-Onorati}
Giovanni Mongardi and Claudio Onorati.
\newblock Birational geometry of irreducible holomorphic symplectic tenfolds of
  {O}'{G}rady type.
\newblock {\em Math. Z.}, 300(4):3497--3526, 2022.

\bibitem[MOS24]{MOGS}
Emanuele Macr\`i, Kieran O'Grady, and Giulia Sacc\`a.
\newblock Irreducible symplectic varieties associated to gushel-mukai
  manifolds.
\newblock In preparation, 2024.

\bibitem[MRS18]{MRS}
Giovanni Mongardi, Antonio Rapagnetta, and Giulia Sacc\`a.
\newblock The {H}odge diamond of {O}'{G}rady's six-dimensional example.
\newblock {\em Compos. Math.}, 154(5):984--1013, 2018.

\bibitem[MT01]{Markushevich-Tikhomirov}
D.~Markushevich and A.~Tikhomirov.
\newblock The {A}bel-{J}acobi map of a moduli component of vector bundles on
  the cubic threefold.
\newblock {\em J. Algebraic Geom.}, 10(1):37--62, 2001.

\bibitem[Muk84]{Mukai}
S.~Mukai.
\newblock Symplectic structure of the moduli space of sheaves on an abelian or
  {$K3$} surface.
\newblock {\em Invent. Math.}, 77(1):101--116, 1984.

\bibitem[Nak04]{Nakayama}
N.~Nakayama.
\newblock {\em Zariski-decomposition and abundance}, volume~14 of {\em MSJ
  Memoirs}.
\newblock Mathematical Society of Japan, Tokyo, 2004.

\bibitem[Nam01]{Namikawa-def}
Yoshinori Namikawa.
\newblock Deformation theory of singular symplectic {$n$}-folds.
\newblock {\em Math. Ann.}, 319(3):597--623, 2001.

\bibitem[Nam06]{Namikawa}
Yoshinori Namikawa.
\newblock On deformations of {$\Bbb Q$}-factorial symplectic varieties.
\newblock {\em J. Reine Angew. Math.}, 599:97--110, 2006.

\bibitem[Ngo10]{Ngo-lemme}
Bao~Ch\^au Ngo.
\newblock Le lemme fondamental pour les alg\`ebres de {L}ie.
\newblock {\em Publ. Math. Inst. Hautes \'Etudes Sci.}, (111):1--169, 2010.

\bibitem[O'G92]{OGrady-Donaldson}
Kieran~G. O'Grady.
\newblock Algebro-geometric analogues of {D}onaldson's polynomials.
\newblock {\em Invent. Math.}, 107(2):351--395, 1992.

\bibitem[O'G97]{OGrady-SantaCruz}
Kieran~G. O'Grady.
\newblock Moduli of vector-bundles on surfaces.
\newblock In {\em Algebraic geometry---{S}anta {C}ruz 1995}, volume 62, Part 1
  of {\em Proc. Sympos. Pure Math.}, pages 101--126. Amer. Math. Soc.,
  Providence, RI, 1997.

\bibitem[PPZ22]{Perry-Pertusi-Zhao}
Alexander Perry, Laura Pertusi, and Xiaolei Zhao.
\newblock Stability conditions and moduli spaces for {K}uznetsov components of
  {G}ushel-{M}ukai varieties.
\newblock {\em Geom. Topol.}, 26(7):3055--3121, 2022.

\bibitem[Rap07]{Rapagnetta-topological}
Antonio Rapagnetta.
\newblock Topological invariants of {O}'{G}rady's six dimensional irreducible
  symplectic variety.
\newblock {\em Math. Z.}, 256(1):1--34, 2007.

\bibitem[Rap08]{Rapagnetta10}
A.~Rapagnetta.
\newblock On the {B}eauville form of the known irreducible symplectic
  varieties.
\newblock {\em Math. Ann.}, 340(1):77--95, 2008.

\bibitem[Sac23]{IntJac}
Giulia Sacc\`a.
\newblock Birational geometry of the intermediate {J}acobian fibration of a
  cubic fourfold.
\newblock {\em Geom. Topol.}, 27(4):1479--1538, 2023.
\newblock With an appendix by Claire Voisin.

\bibitem[Saw16]{Sawon-finiteness}
Justin Sawon.
\newblock A finiteness theorem for {L}agrangian fibrations.
\newblock {\em J. Algebraic Geom.}, 25(3):431--459, 2016.

\bibitem[SY22]{Shen-Yin}
Junliang Shen and Qizheng Yin.
\newblock Topology of {L}agrangian fibrations and {H}odge theory of
  hyper-{K}\"{a}hler manifolds.
\newblock {\em Duke Math. J.}, 171(1):209--241, 2022.
\newblock With Appendix B by Claire Voisin.

\bibitem[Voi92]{Voisin-Lagrangian}
C.~Voisin.
\newblock Sur la stabilit\'{e} des sous-vari\'{e}t\'{e}s lagrangiennes des
  vari\'{e}t\'{e}s symplectiques holomorphes.
\newblock In {\em Complex projective geometry ({T}rieste, 1989/{B}ergen,
  1989)}, volume 179 of {\em London Math. Soc. Lecture Note Ser.}, pages
  294--303. Cambridge Univ. Press, Cambridge, 1992.

\bibitem[Voi18]{Voisin-twisted}
C.~Voisin.
\newblock Hyper-{K}\"{a}hler compactification of the intermediate {J}acobian
  fibration of a cubic fourfold: the twisted case.
\newblock In {\em Local and global methods in algebraic geometry}, volume 712
  of {\em Contemp. Math.}, pages 341--355. Amer. Math. Soc., Providence, RI,
  2018.

\bibitem[Voi22]{Voisin-Lefschetz}
Claire Voisin.
\newblock On the {L}efschetz standard conjecture for {L}agrangian covered
  hyper-{K}\"{a}hler varieties.
\newblock {\em Adv. Math.}, 396:Paper No. 108108, 29, 2022.

\bibitem[Wu24]{Wu}
Ben Wu.
\newblock Hodge numbers of {O}'{G}rady 6 via {N}g\^o{} strings.
\newblock {\em Manuscripta Math.}, 174(3-4):1015--1042, 2024.

\end{thebibliography}

\end{document}